\theoremstyle{definition}
\newtheorem{definition}{Definition}[section]
\theoremstyle{plain}
\newtheorem{theorem}[definition]{Theorem}
\newtheorem{proposition}[definition]{Proposition}
\newtheorem{lemma}[definition]{Lemma}
\newtheorem{corollary}[definition]{Corollary}
\newtheorem{example}[definition]{Example}
\theoremstyle{remark}
\newtheorem{remark}[definition]{Remark}
\numberwithin{equation}{section}
\newcommand{\bbe}{\mathbb{E}}
\newcommand{\bbf}{\mathbb{F}}
\newcommand{\bbk}{\mathbb{K}}
\newcommand{\bbi}{\mathbb{I}}
\newcommand{\RNum}[1]{\uppercase\expandafter{\romannumeral #1\relax}}
\begin{document}

\title{On the Harish-Chandra homomorphism for  quantum superalgebras}
\date{}
\author{Yang Luo}
\address{School of Mathematics, University of Science and Technology of China, Hefei 230026, China.}
\email{yangluo@mail.ustc.edu.cn}
\author{Yongjie Wang}
\address{Department of Mathematics, Hefei University of Technology, Hefei 230009, China.}
\email{wyjie@mail.ustc.edu.cn}
\author{Yu Ye}
\address{School of Mathematics, University of Science and Technology of China, Hefei 230026, China.}
\email{yeyu@ustc.edu.cn}
\dedicatory{}
\maketitle
\begin{abstract}
In this paper, we introduce the Harish-Chandra homomorphism for the quantum superalgebra $\mathrm{U}_q(\mathfrak{g})$ associated with a simple basic Lie superalgebra $\mathfrak{g}$ and give an explicit description of its image. We use it to prove that the center of $\mathrm{U}_q(\mathfrak{g})$ is isomorphic to a subring of the ring  $J(\mathfrak{g})$ of exponential super-invariants in the sense of Sergeev and Veselov, establishing a Harish-Chandra type theorem for $\mathrm{U}_q(\mathfrak{g})$. As a byproduct, we obtain a basis of the center of $\mathrm{U}_q(\mathfrak{g})$ with the aid of quasi-$R$-matrix. 
\bigskip

\noindent\textit{MSC(2020):} 16U70, 17B37, 20G42.
\bigskip

\noindent\textit{Keywords:}  Drinfeld double; Harish-Chandra homomorphism; Quantum superalgebra; Quasi-R-matrix; Supercharacter. 
\end{abstract}

\tableofcontents

\section{Introduction}
\label{sec:intr}

Harish-Chandra introduced a homomorphism, known as the \textit{Harish-Chandra homomorphism}, for semisimple Lie algebras in the study of unitary representations of semisimple Lie groups in 1951 \cite{HC}. Later on, the  Harish-Chandra homomorphism was developed for infinite dimensional Lie algebras \cite{Kac3,Naito}, Lie superalgebras \cite{Kac3, Sergeev0, Sergeev} and quantum groups \cite{BY,CK,JoLe,Rosso,Tanisaki2}. 

Knowledge about the invariants and the center of quantum superalgebras is not merely of mathematical interest but is also physically important.  On one hand, the study of the centralizer of a (quantized) universal enveloping (super)algebra is an indispensable part of its representation theory. On the other hand, the study of physical theories to a large extent involves the exploration of the invariants of the symmetry algebras, which usually correspond to certain physical observables.   The \textit{Harish-Chandra homomorphism} reveals many connections between the center of the enveloping (super)algebras or their quantization and the (super)symmetric polynomials as well as the highest weight representations of the corresponding algebras, and it has been one of the most inspiring themes in Lie theory.

Let $\mathfrak{g}$ be a semisimple Lie algebra (resp., a basic Lie superalgebra) over $\mathbb{C}$ with triangular decomposition $\mathfrak{g}=\mathfrak{n}^-\oplus\mathfrak{h}\oplus\mathfrak{n}^+$, where $\mathfrak{h}$ is a Cartan subalgebra and $\mathfrak{n}^+$ (resp., $\mathfrak{n}^-$) is the positive (resp., negative) part of $\mathfrak{g}$ corresponding to a positive root system $\Phi^+$. By the PBW Theorem, we have the decomposition $\mathrm{U}(\mathfrak{g})=\mathrm{U}(\mathfrak{h})\oplus \left(\mathfrak{n}^-\mathrm{U}(\mathfrak{g})+\mathrm{U}(\mathfrak{g})\mathfrak{n}^+\right)$. Let $\pi\colon\mathrm{U}(\mathfrak{g})\rightarrow\mathrm{U}(\mathfrak{h})=\mathrm{S}(\mathfrak{h})$ be the associated projection. The restriction  of $\pi$ to the center $\mathcal{Z}(\mathrm{U}(\mathfrak{g}))$ of $\mathrm{U}(\mathfrak{g})$ is an algebra homomorphism,  and the composite $\gamma_{-\rho}\circ\pi\colon\mathcal{Z}(\mathrm{U}(\mathfrak{g}))\rightarrow\mathrm{S}(\mathfrak{h})$ of  $\pi$  with a ``shift'' by the Weyl vector $\rho$ is called the \textit{Harish-Chandra homomorphism} of $\mathrm{U}(\mathfrak{g})$. The famous Harish-Chandra isomorphism theorem says that $\gamma_{-\rho}\circ\pi$ induces an isomorphism from $\mathcal{Z}(\mathrm{U}(\mathfrak{g}))$ to the algebra of $W$-invariant polynomials if $\mathfrak{g}$ is a semisimple Lie algebra or the algebra of $W$-invariant supersymmetric polynomials if $\mathfrak{g}$ is a classical Lie superalgebra. More details can be found in \cite[Chapter 11]{Carter} for classical Lie algebras,  and \cite[Section 2.2]{ChengWang}, \cite[Chapter 13]{Musson} for classical Lie superalgebras.

Quantum groups, first appearing in the theory of quantum integrable system, were formalized independently by Drinfeld and Jimbo as certain special Hopf algebras around 1984 \cite{Drinfeld, Jimbo}, including deformations of universal enveloping algebras of semisimple Lie algebras and coordinate algebras of the corresponding algebraic groups. In 1990, by the aid of the Universal $R$-matrix, Rosso \cite{Rosso} defined a significant ad-invariant bilinear form on $\mathrm{U}_q(\mathfrak{g})$ at a generic value $q$ of the parameter. The form, often referred to as the Rosso form or quantum Killing form, could also be obtained by using Drinfeld double construction. Tanisaki \cite{Tanisaki1,Tanisaki2} described this form by skew-Hopf pairing between the positive part and the negative part of the quantum algebra and obtained the quantum analogue of the Harish-Chandra isomorphism between $\mathcal{Z}(\mathrm{U}_q(\mathfrak{g}))$ and the subalgebra of $W$-invariant Laurent polynomials. As an application, the generators and the defining relations for $\mathcal{Z}(\mathrm{U}_q(\mathfrak{g}))$ have been obtained in \cite{Baumann,DZ,LXZ}.

Associated with the generalization of Lie algebras to Lie superalgebras,  many researchers have investigated the quantization of universal enveloping superalgebras in recent years. Drinfeld-Jimbo quantum superalgebras \cite{Yamane1, ZGB3} are a class of quasi-triangular Hopf superalgebras, depending on the choice of Borel subalgebras,  which were introduced in the early 1990s. As a supersymmetric version of quantum groups, quantum superalgebras have a natural connection with supersymmetric integrable lattice models and conformal field theories. They have been found applications in various areas, including in the study of the solution of quantum Yang-Baxter equation \cite{GZB}, construction of topological invariants of knots and 3-mainfolds \cite{Zhang0,ZGB1,ZGB2} and so on. Quantum superalgebras have been investigated extensively by many authors from different perspectives in aspects such as Serre relations, PBW basis, universal R-matrix \cite{Yamane1, Yamane2}, crystal bases \cite{Kwon1,Kwon2}, invariant theory \cite{LZZ}, highest weight representations \cite{Ge2, Zhang1, Zhang2} and so on. 

Comparing to Lie (super)algebras and quantum groups, the following questions for quantum superalgebras are natural and fundamental: What is the Harish-Chandra isomorphism for quantum superalgebras? How to determine the center of quantum superalgebras? The purpose of the present work is to answer these questions.

Let $\mathfrak{g}$ be a simple basic Lie superalgebra, except for $A(1,1)$, with root system $\Phi=\Phi_{\bar{0}}\cup\Phi_{\bar{1}}$,  and let $\mathrm{U}=\mathrm{U}_q(\mathfrak{g})$ be the associated quantum superalgebra over $k=K(q^{\frac{1}{2}})$, where $K$ is a field of characteristic 0 and $q$ is an indeterminate. The Weyl group and Weyl vector are defined by $W$ and $\rho$, respectively. Let $\Lambda=\left\{\lambda\in \mathfrak{h}^*\left |\frac{2(\lambda,\alpha)}{(\alpha,\alpha)}\in\mathbb{Z},~\forall\alpha\in\Phi_{\bar{0}}\right.\right\}$ be the integral weight lattice,  where $\mathfrak{h}^*$ is the dual space of the cartan subalgebra $\mathfrak{h}$. In this paper, all finite-dimensional $\mathrm{U}$-modules are of type {\bf 1} with the weights contained in $\frac{1}{2}\mathbb{Z}\Phi$.

The Cartan subalgebra $\mathrm{U}^0$ is the group ring of $\mathbb{Z}\Phi$ with basis $\left\{\left.\bbk_{\mu}\right |\mu\in \mathbb{Z}\Phi\right\}$ and multiplication $\bbk_{\mu}\bbk_{\nu}=\bbk_{\mu+\nu}$ for all $\mu,\nu\in \mathbb{Z}\Phi$. For each $\lambda\in \Lambda$, we define an automorphism $\gamma_{\lambda}\colon\mathrm{U}^0\rightarrow\mathrm{U}^0$ by $\gamma_{\lambda}(\bbk_{\mu})=q^{(\lambda,\mu)}\bbk_{\mu}$ for all $\mu\in \mathbb{Z}\Phi$.

Let $\Pi$ be the simple roots of distinguished borel subalgebra if $\mathfrak{g}=A(n,n)$ with $n\neq 1$, and let $\mathbb{Z}\tilde{\Phi}$ be the free abelian group with $\mathbb{Z}$-basis $\Pi$.  We 
set
$$Q=
\begin{cases}
\mathbb{Z}\tilde{\Phi}, &\text{ for } \mathfrak{g}= A(n,n),\\
\mathbb{Z}\Phi, & \text{ otherwise}.
\end{cases}
$$
Thus, the root system of $A(n,n)$ is $\mathbb{Z}{\Phi}=\mathbb{Z}\tilde{\Phi}/\mathbb{Z}\gamma$ for some $\gamma$. Define the standard partial order relation on $Q$ by $\lambda\leqslant\mu \Leftrightarrow \mu-\lambda\in \sum_{i\in\mathbb{I}}\mathbb{Z}_+\alpha_i$.
 
There is a triangular decomposition $\mathrm{U}=\mathrm{U}^-\mathrm{U}^0\mathrm{U}^+$, where $\mathrm{U}^-$ and $\mathrm{U}^+$ are the negative and positive parts of $\mathrm{U}$, respectively. Clearly $\mathrm{U}$, $\mathrm{U}^-$ and $\mathrm{U}^+$ are all $Q$-graded algebras. The triangular decomposition implies a direct sum decomposition
\begin{equation*}
  \mathrm{U}_0=\mathrm{U}^0\oplus\mathop{\bigoplus}\limits_{\nu>0}\mathrm{U}^-_{-\nu}\mathrm{U}^0\mathrm{U}^+_{\nu},
\end{equation*}
where $ \mathrm{U}_0$ is the component of degree $0$ of $\mathrm{U}$, and $\mathrm{U}^+_{\nu}$ (resp., $\mathrm{U}^-_{-\nu}$) is the component of degree $\nu$ (resp., $-\nu$) of $\mathrm{U}^+$ (resp., $\mathrm{U}^-$) for $\nu>0$. Note that the projection map $\pi\colon\mathrm{U}_0\rightarrow\mathrm{U}^0$ is an algebra homomorphism.  From now on, we do not make a distinction between the element in $\mathbb{Z}\Phi$ and $Q$ if no confusion emerges. 

We observe that the center  $\mathcal{Z}(\mathrm{U}_q(\mathfrak{g}))$ of $\mathrm{U}_q(\mathfrak{g})$ is contained in $\mathrm{U}_0$ by Corollary \ref{centerU_0}. Inspired by the quantum group case, we define the \textit{Harish-Chandra homomorphism} $\mathcal{HC}$ of $\mathrm{U}_q(\mathfrak{g})$ to be the composite 
\[\mathcal{HC}\colon\mathcal{Z}(\mathrm{U}_q(\mathfrak{g}))\hookrightarrow \mathrm{U}_0\xrightarrow{\pi} \mathrm{U}^0 \xrightarrow{\gamma_{-\rho}}  \mathrm{U}^0.
\]

To establish the Harish-Chandra type theorem for quantum superalgebras, we need to describe the image of $\mathcal{HC}$. Recall that a root $\alpha\in\Phi$ is isotropic if $(\alpha,\alpha)=0$, and the set of isotropic roots is denoted by ${\Phi}_{\mathrm{iso}}$. Set
\begin{equation*}
(\mathrm{U}^0_{\mathrm{ev}})^{W}_{\mathrm{sup}}=\Bigg\{\mathop{\sum}\limits_{\mu\in 2\Lambda\cap \mathbb{Z}\Phi} a_{\mu}\bbk_{\mu}\in\mathrm{U}^0\Bigg |a_{w\mu}=a_{\mu}, ~\forall w\in W; \mathop{\sum}\limits_{\mu \in A^{\alpha}_{\nu}}a_{\mu}=0,~\forall\alpha\in{\Phi}_{\mathrm{iso}}~\text{with} ~(\nu,\alpha)\neq0\Bigg\},
\end{equation*}
where $A^{\alpha}_{\nu}=\left\{\left.\nu+n\alpha~\right |n\in\mathbb{Z}\right\}$ for each $v\in\Lambda$ and $\alpha\in{\Phi}_{\mathrm{iso}}$.
The notation is consistent with the one in quantum groups \cite[Section 6.6]{Jan} and the one in classical Lie superalgebras \cite[Subsection 2.2.4]{ChengWang}. Then the image of $\mathcal{HC}$ is contained in $(\mathrm{U}^0_{\mathrm{ev}})^{W}_{\mathrm{sup}}$, which is essentially derived from character formulas of Verma modules and simple modules of $\mathrm{U}_q(\mathfrak{g})$, certain automorphisms of $\mathrm{U}_q(\mathfrak{g})$ and nontrivial homomorphisms between Verma modules; see Lemmas \ref{U0}, \ref{Uev}, \ref{Usup}.

Now we can state our main theorem.

\vspace{5pt}
\noindent {\bf Theorem A.} \emph{ The Harish-Chandra homomorphism $\mathcal{HC}$ for the quantum superalgebra $\mathrm{U}_q(\mathfrak{g})$ associated to a simple basic Lie superalgebra $\mathfrak{g}$ induces an  isomorphism from $\mathcal{Z}(\mathrm{U}_q(\mathfrak{g}))$ to $(\mathrm{U}^0_{\mathrm{ev}})^{W}_{\mathrm{sup}}$.}
\vspace{1.5pt}

The Lie superalgebra $\mathfrak{g}=A(1,1)$ is very special, and the image of $\mathcal{HC}$ is contained in $(\mathrm{U}^0_{\mathrm{ev}})^{W}_{\mathrm{sup}}$, we think $\mathcal{HC}$ is not an isomorphism; see Remark \ref{A(1,1)}.

We noticed that Batra and Yamane have introduced the generalized quantum group $U(\chi,\pi)$ associated with a bi-character $\chi$ and established a Harish-Chandra type theorem for describing its (skew) center in \cite{BY}.  While the quantum superalgebra $\mathrm{U}_q(\mathfrak{s})$ of a basic classical Lie superalgebra $\mathfrak{s}$ has been identified with a subalgebra of $\hat{U}^{\sigma}$ involving a new generator $\sigma$, so does the image of Harish-Chandra homomorphism (see \cite{BY}).  It is not 
known whether one can derive the Harish-Chandra type theorem for quantum superalgebra $\mathrm{U}_q(\mathfrak{s})$ from \cite{BY}.

As an application, we obtain  a basis of $\mathcal{Z}(\mathrm{U}_q(\mathfrak{g}))$ by using quasi-R-matrix.

\vspace{5pt}
\noindent {\bf Theorem B.}  \emph{ The center $\mathcal{Z}(\mathrm{U}_q(\mathfrak{g}))$ has a basis, which is constructed by using quasi-R-matrix and parametrized by $\left\{\left.\lambda\in \Lambda\cap\frac{1}{2}\mathbb{Z}\Phi\right |\mathrm{dim}L(\lambda)<\infty  \right\}$, where $L(\lambda)$ is an irreducible module of Lie superalgebra $\mathfrak{g}$ with the highest weight $\lambda$.}
\vspace{5pt}

To prove Theorem A, it suffices to prove $\mathcal{HC}$ is injective and the image $\mathcal{HC}$ is equal to $(\mathrm{U}^0_{\mathrm{ev}})^{W}_{\mathrm{sup}}$. For the injectivity, we establish a key Proposition \ref{3.3} by using the character formula of typical finite-dimensional modules of $\mathrm{U}_q(\mathfrak{g})$, which is a super version of Tanisaki's result for quantum algebras \cite[Section 3.2]{Tanisaki2}.
 
The difficulty is proving the image of $\mathcal{HC}$ is equal to $(\mathrm{U}^0_{\mathrm{ev}})^{W}_{\mathrm{sup}}$. With the help of the well-known classical Lie theory of semisimple Lie algebras, one can prove the surjectivity for quantum groups by using induction on the weights. However, the similar technique does not apply to quantum superalgebras, where one encounters two main obstacles: \\
\indent 1): There are infinite dominant weights less than a given dominant weight with respect to the standard partial order if $\mathfrak{g}$ is of type  \RNum1.\\
\indent  2): Besides the condition of the $\Phi_{\bar{0}}^+$-dominant integral, an extra condition for the finiteness of the dimension of an irreducible $\mathfrak{g}$-module $L(\lambda)$ is that $\lambda$ satisfies the hook partition if $\mathfrak{g}$ is of type  \RNum2. 

We notice that the close connection between $K(\mathfrak{g})$,  $J(\mathfrak{g})$ and $K(\mathrm{U}_q(\mathfrak{g}))$ will help us to overcome the obstacles, where $K(\mathfrak{g})$ and $K(\mathrm{U}_q(\mathfrak{g}))$ are the Grothendieck rings of $\mathfrak{g}$ and $\mathrm{U}_q(\mathfrak{g})$, respectively, and $J(\mathfrak{g})$ is the ring of Laurent supersymmetric polynomials (also called \textit{ring of exponential super-invariants} in \cite{SerVes}). Recall Sergeev and Veselov's isomorphism \cite{SerVes} $\mathrm{Sch}\colon K(\mathfrak{g})\xrightarrow{\sim} J(\mathfrak{g})$, where $\mathrm{Sch}$ is the supercharater map, and the injective algebra homomorphism $\jmath\colon K(\mathfrak{g})\hookrightarrow K(\mathrm{U}_q(\mathfrak{g}))$ is  induced by taking deformation,  which is implicitly given by Geer in \cite{Ge2}. The main ingredient of 
our proof can be illustrated in the following commutative diagram:
\begin{equation*}
\xymatrix@C=2pt{
&k\otimes_{\mathbb{Z}}K(\mathrm{U}_q(\mathfrak{g}))&&k\otimes_{\mathbb{Z}}K(\mathfrak{g})\ar@{_{(}->}[ll]_{k\otimes_{\mathbb{Z}}\jmath}\ar[dr]_{\cong}^(.6){k\otimes_{\mathbb{Z}}\mathrm{Sch}}   &\\
  {k\otimes_{\mathbb{Z}}}{K_{\mathrm{ev}}(\mathrm{U}_q(\mathfrak{g}))}\ar@{^{(}->}[ur]\ar@{-->}[dr]_(.5){\Psi_{\mathcal{R}}}&&k\otimes_{\mathbb{Z}}K_{\mathrm{ev}}(\mathfrak{g})\ar@{^{(}->}[ur]\ar@{_{(}->}[ll]\ar@{-->}[dr]_(.4){\cong} &&{k\otimes_{\mathbb{Z}}}J(\mathfrak{g})\\
&\mathcal{Z}(\mathrm{U}_q(\mathfrak{g}))\ar@{-->}[rr]^(.42){\mathcal{HC}}
&&(\mathrm{U}^0_{\mathrm{ev}})_{\mathrm{sup}}^W=k\otimes_{\mathbb{Z}}J_{\mathrm{ev}}(\mathfrak{g})\ar@{^{(}-->}[ur]_{\iota}&
}
\end{equation*}


First,  we identify $(\mathrm{U}^0_{\mathrm{ev}})_{\mathrm{sup}}^W$ with a subring of ${k\otimes_{\mathbb{Z}}}J(\mathfrak{g})$ by some $\iota$,  and the key idea is to reformulate $(\mathrm{U}^0_{\mathrm{ev}})_{\mathrm{sup}}^W$ as $k\otimes_{\mathbb{Z}} J_{\mathrm{ev}}(\mathfrak{g})$, which embeds into ${k\otimes_{\mathbb{Z}}}J(\mathfrak{g})$ in a natural way; see Equation \ref{J_q(g)} and Proposition \ref{J(U)}. One can prove that under the isomorphism $k\otimes_{\mathbb{Z}}\mathrm{Sch}$, the ring $(\mathrm{U}^0_{\mathrm{ev}})_{\mathrm{sup}}^W$ is isomorphic to ${k\otimes_{\mathbb{Z}}}{K_{\mathrm{ev}}(\mathfrak{g})}$, where $K_{\mathrm{ev}}(\mathfrak{g})$ is a subring of $K(\mathfrak{g})$ consisting of modules with all weights contained in $\Lambda\cap\frac{1}{2}\mathbb{Z}\Phi$.

Second,  $\jmath$ induces an injection  $k\otimes_{\mathbb{Z}}K_{\mathrm{ev}}(\mathfrak{g})\hookrightarrow k\otimes_{\mathbb{Z}}K_{\mathrm{ev}}(\mathrm{U}_q(\mathfrak{g}))$, where  $K_{\mathrm{ev}}(\mathrm{U}_q(\mathfrak{g}))$ is the subring of $K(\mathrm{U}_q(\mathfrak{g}))$ consisting of modules with  all weights contained in $\Lambda\cap\frac{1}{2}\mathbb{Z}\Phi$.

Third,  analogous to quantum groups \cite[Chapter 6]{Jan}, \cite{Rosso, Tanisaki1}, by using the Rosso form and the quantum supertrace for quantum superalgebras, we define a linear map $\Psi_\mathcal{R}\colon {k\otimes_{\mathbb{Z}}}{K_{\mathrm{ev}}(\mathrm{U}_q(\mathfrak{g}))} \to \mathcal{Z}(\mathrm{U}_q(\mathfrak{g}))$; see Proposition \ref{Psi_R}. This involves lengthy computations and some subtle constructions. We remark that $\Psi_{\mathcal{R}}$ is an algebra isomorphism, but not in an obvious way.

Now the surjectivity of $\mathcal{HC}$ follows from the commutative diagram easily. Moreover, we show that $\mathcal{HC}\circ \Psi_{\mathcal{R}}$ is injective, and combined with the injectivity of $\mathcal{HC}$, we can prove that homomorphisms occurring in the bottom left parallelogram are all isomorphisms of algebras. Consequently, the restriction $\jmath\colon K_{\mathrm{ev}}(\mathfrak{g})\to K_{\mathrm{ev}}(\mathrm{U}_q(\mathfrak{g}))$ is an isomorphism. 

By definition, $k\otimes_{\mathbb{Z}}K_{\mathrm{ev}}(\mathfrak{g})$ has a basis $\left\{\left.[L(\lambda)] \right | \lambda\in \Lambda\cap \frac{1}{2} \mathbb{Z}\Phi,~ \dim L(\Lambda)<\infty\right\}$ and $k\otimes_{\mathbb{Z}}K_{\mathrm{ev}}(\mathrm{U}_q(\mathfrak{g}))$ has a basis $\left\{\left.[L_q(\lambda)]\right | \lambda\in \Lambda\cap \frac{1}{2} \mathbb{Z}\Phi,~ \dim L_q(\Lambda)<\infty\right\}$, where $L(\Lambda)$ and $L_q(\Lambda)$  are the irreducible $\mathfrak{g}$-module and the irreducible $\mathrm{U}_q(\mathfrak{g})$-module with the highest weight $\lambda$, respectively. We remark that if $\lambda\in \Lambda\cap \frac{1}{2} \mathbb{Z}\Phi$, then $\dim L(\Lambda)<\infty$ if and only if $\dim L_q(\Lambda)<\infty$. Then the desired basis of $\mathcal{Z}(\mathrm{U}_q(\mathfrak{g}))$ in Theorem B is obtained by applying the isomorphism $\Psi_{\mathcal{R}}$, and here we rely heavily on an alternating construction of  $\Psi_{\mathcal{R}}$ by using quasi-R-matrix as in \cite{GZB0}. 

The paper is organized as follows: In Section \ref{Liequantum}, we review some basic facts related to contragredient Lie superalgebras and quantum superalgebras. In Section \ref{Representations}, we show several useful results on representations of quantum superalgebras, which seem to be well-known among experts. In particular, we give a super version of a Tanisaki's result for quantum superalgebras (see Proposition \ref{3.3}), which has been used to prove the injectivity of $\mathcal{HC}$. In Section \ref{Drinfelddouble}, we recall that the quantum superalgebra can be realized as a Drinfeld double. As a consequence, a non-degenerate ad-invariant bilinear form on $\mathrm{U}_q(\mathfrak{g})$ (Theorem \ref{adinvariant}) is obtained, which serves for proving the surjectivity of $\mathcal{HC}$. In Section \ref{HChomomorphism}, first we define the Harish-Chandra homomorphism for quantum superalgebras and prove its injectivitity. Then we prove that the image of $\mathcal{HC}$ is contained in $(\mathrm{U}^0_{\mathrm{ev}})_{\mathrm{sup}}^W$
and then explicitly describe its image $J_{\mathrm{ev}}(\mathfrak{g})$, which will be used to prove our main theorem for quantum superalgebras; see Theorem A. In  Section \ref{Centers}, we construct an explicit central element $C_{M}$ associated with each finite-dimensional $\mathrm{U}_q(\mathfrak{g})$-module $M$ by using the quasi-R-matrix of quantum superalgebras.  As an application of the Harish-Chandra theorem, we obtain a basis for the center of quantum superalgebras.

\medskip
\noindent{\bf Notations and terminologies:}

Throughout this paper, we will use the standard notations $\mathbb{Z}$, $\mathbb{Z}_+$ and $\mathbb{N}$ that represent the sets of integers, non-negative integers and positive integers, respectively. The Kronecker delta $\delta_{ij}$ is equal to $1$ if $i=j$ and $0$ otherwise.

We write $\mathbb{Z}_2=\left\{\bar{0},\bar{1}\right\}$. For a homogeneous element $x$ of an associative or Lie superalgebra, we use $|x|$ to denote the degree of $x$ with respect to the $\mathbb{Z}_2$-grading. Throughout the paper, when we write $|x|$ for an element $x$,  we will always assume that $x$ is a homogeneous element and automatically extend the relevant formulas by linearity (whenever applicable). All modules of Lie superalgebras and quantum superalgebras are assumed to be $\mathbb{Z}_2$-graded. The tensor product of two superalgebras $A$ and $B$ carries a superalgebra structure by 
$$(a_1 \otimes b_1)\cdot(a_2\otimes b_2) =(-1)^{|a_2||b_1|}a_1a_2\otimes b_1b_2.$$

\section{Lie superalgebras and quantum superalgebras}\label{Liequantum}
\subsection{Lie superalgebras}
\quad Let $\mathfrak{g}=\mathfrak{g}_{\bar{0}}\oplus\mathfrak{g}_{\bar{1}}$ be a finite-dimensional complex simple Lie superalgebra of type A-G such that $\mathfrak{g}_{\bar{1}}\ne 0$, and let $\Pi=\left\{\alpha_1, \alpha_2,\ldots \alpha_r\right\}$, with $r$ the rank of $\mathfrak{g}$, be the simple roots of $\mathfrak{g}$. Also let $(A,\tau)$ be the corresponding Cartan matrix, where $A=(a_{ij})$ is a $r\times r$ matrix and $\tau$ is a subset of $\bbi=\left\{1,2,\ldots,r\right\}$ determining the parity of the generators. Kac showed that the Lie superalgebra  $\mathfrak{g}(A,\tau)$ is characterized by its associated Dynkin diagrams (equivalent Cartan matrix $A$,  and a subset $\tau$); see \cite{Kac}. These Lie superalgebras are called basic.  For convenience (see remark \ref{nonconju}), we will restrict our attention to the simplest case and only consider root systems related to a special Borel sub-superalgebra with at most one odd root, called distinguished root system, denoted by $\mathfrak{g}(A,\{s\})$ or simply $\mathfrak{g}$ in no confusion.  The explicit description of root systems can be found in Appendix \ref{appendix}.  The Cartan matrix $A$ is symmetrizable,  that is,  there exist non-zero rational numbers $d_1,d_2,\ldots d_r$ such that $d_ia_{ij}=d_ja_{ji}$.  Without loss of  generality,  we assume $d_1=1$, since there exists a unique (up to constant factor) non-degenerate supersymmetric invariant bilinear form $(\text{-},\text{-})$ on $\mathfrak{g}$ and the restriction of this form to Cartan subalgebra $\mathfrak{h}$ is also non-degenerate. Let $\Phi$ be the root system of $\mathfrak{g}$, and define the sets of even and odd roots, respectively, to be $\Phi_{\bar{0}}$ and $\Phi_{\bar{1}}$.   
In order to define quantum superalgebra associated with a Lie superalgebra $\mathfrak{g}(A,\{s\})$,  we first review the generators-relations presentation of Lie superalgebra $\mathfrak{g}(A,\{s\})$ given by  Yamane \cite{Yamane2} and Zhang \cite{Zhang3}.  
\begin{definition} 
\cite[Theorem 3.4]{Zhang3} Let $(A,\{s\})$ be the Cartan matrix of a contragredient Lie superalgebra in the distinguished root system. Then $\mathrm{U}(\mathfrak{g}(A,\{s\}))$ (simplify for $\mathrm{U}(\mathfrak{g})$) is generated by $e_i,  f_i, h_i (i=1,2,\ldots r)$ over $\mathbb{C}$, where  $e_s$ and $f_s$ are odd and the rest are even,  subject to the quadratic relations:
\begin{align}
[h_i,  h_j]=0,\quad [h_i,  e_j]=a_{ij}e_j,\quad  [h_i,  f_j]=-a_{ij}f_j,\quad  [e_i,  f_j]=\delta_{ij}h_j,
\end{align}
and the additional linear relation $\mathop{\sum}\limits_{i=1}^rJ_ih_i=0$ if $\mathfrak{g}=A\left(\frac{r-1}{2},\frac{r-1}{2}\right)$ for odd $r$, where $J=\left(J_1,J_2,\cdots, J_r\right)$ such that $JA=0$ (more explicitly, $J=\left(1,2,\cdots,\frac{r+1}{2},-\frac{r-1}{2},-\frac{r-3}{2},\cdots,-1\right)$), 
and the \textit{standard Serre relations}
\begin{align*}
e_s^2=f_s^2=0,\quad\text{ if } (\alpha_s, \alpha_s)=0,\\
(\mathrm{ad} e_i)^{1-a_{ij}} e_j=(\mathrm{ad} f_i)^{1-a_{ij}} f_j=0,  &\text{ if } i\ne j,  \text{ with } a_{ii}\ne 0, ~\text{or} ~a_{ij}=0,
\end{align*}
and \textit{higher order Serre relations}
\begin{eqnarray}
\begin{aligned}
[e_s,  [e_{s-1},  [e_ s,  e_{s+1}]]]=0,\quad [f_s,  [f_{s-1},  [f_s,  f_{s+1}]]]=0,
\end{aligned}
\end{eqnarray}
if the Dynkin diagram of $A$ contains a full sub-diagram of the form
\begin{center}
\begin{picture}(85, 15)(0, 7)
\put(10, 10){\circle{10}} 
\put(0, -2){\tiny $s-1$} 
\put(15,10){\line(1, 0){20}}
\put(42, 10){\makebox(0,0)[c]{$\bigotimes$ }}
\put(38, -2){\tiny $s$} 
\put(45, 10){\line(1, 0){20}} 
\put(70,10){\circle{10}} 
\put(62, -2){\tiny $s+1$} 
\put(77, 7){,}
\end{picture}
\quad or \quad
\begin{picture}(85, 15)(0, 7)
\put(10, 10){\circle{10}} 
\put(0, -2){\tiny $s-1$} 
\put(15,10){\line(1, 0){20}} 
\put(42, 10){\makebox(0,0)[c]{$\bigotimes$ }}
\put(38, -2){\tiny $s$} 
\put(45, 9){\line(1, 0){20}} 
\put(45, 11){\line(1, 0){20}} 
\put(52, 7.3){$>$}
\put(70,10){\circle{10}} 
\put(62, -2){\tiny $s+1$} 
\put(77, 7){.}
\end{picture}
\end{center}
\end{definition}

We refer the reader to \cite{Zhang3} for undefined terminology and the presentation for each simple basic Lie superalgebra in an arbitrary root system. 

\subsection{Quantum superalgebras} 

Let $k=K(q^{\frac{1}{2}})$,  where $K$ is a field of characteristic 0 and $q$ is an indeterminate,  and we set $q_i=q^{d_i}$, then $q_{i}^{a_{ij}}=q_{j}^{a_{ji}}$ for all $i,j=1,2,\ldots, r$. Define
\begin{align*}
\begin{bmatrix} m\\ n\end{bmatrix}_q=\begin{cases}
\prod\limits_{i=1}^n\frac{(q^{m-i+1}-q^{i-m-1})}{(q^i-q^{-i})}, &\text{ if  }m>n>0,\\
1, &\text{if } n=m,  0.
\end{cases}
\end{align*}

\begin{definition}{\cite{Ge1,  LZZ,Yamane1}}\label{quantumsupergroups}
Let $(A,\{s\})$ be the Cartan matrix of a simple basic Lie superalgebra $\mathfrak{g}$ in the distinguished root system. The quantum superalgebra $\mathrm{U}_q(\mathfrak{g})$ is defined over $k$ in $q$ generated by $\mathbb{K}_i^{\pm1}, \mathbb{E}_i, \mathbb{F}_i, i\in\bbi$ (all generators are even except for $\bbe_s$ and $\bbf_s$, which are odd), subject to the following relations:
\begin{align}\label{def}
&\bbk_i\bbk_j=\bbk_j\bbk_i,\ \quad\quad\quad\quad \bbk_i\bbk_i^{-1}=\bbk_i^{-1}\bbk_i=1,\\\ 
&\bbk_i\bbe_j\bbk_i^{-1}=q^{(\alpha_i,\alpha_j)}\bbe_j,\, \quad \bbk_i\bbf_j\bbk_i^{-1}=q^{-(\alpha_i,\alpha_j)}\bbf_j, \\
&\bbe_i\bbf_j-(-1)^{|\bbe_i||\bbf_i|}\bbf_i\bbe_i=\delta_{ij}\frac{\bbk_i-\bbk_i^{-1}}{q_i-q_i^{-1}},\\
&\sum\limits_{k=0}^{1-a_{ij}}(-1)^k{\begin{bmatrix} 1-a_{ij}\\ k\end{bmatrix}}_{q_i}\bbe_i^{1-a_{ij}-k}\bbe_j\bbe_i^k=0,\text{ if } i\ne j,  \text{ with } a_{ii}\ne 0,  \text{ or }a_{ij}=0,\label{serreE1}\\
&\sum\limits_{k=0}^{1-a_{ij}}(-1)^k{\begin{bmatrix} 1-a_{ij}\\ k\end{bmatrix}}_{q_i}\bbf_i^{1-a_{ij}-k}\bbf_j\bbf_i^k=0, \text{ if } i\ne j,  \text{ with } a_{ii}\ne 0,  \text{ or }a_{ij}=0,\label{serreF1}\\
&(\bbe_s)^2=(\bbf_s)^2=0,\text{ if } a_{ss}=0,
\end{align}
\text{ and higher order quantum Serre relations, and} 
$$\mathop{\prod}\limits_{i=1}^r\bbk_i^{d_iJ_i}=1 \text{ if }\mathfrak{g}=A\left(\frac{r-1}{2}, \frac{r-1}{2}\right) \text{ for odd }r.$$
For the distinguished root data \cite[Appendix A.2.1]{Zhang3},  higher order Serre relations appear if the Dynkin diagram contains a sub-diagram of the following types:
\begin{enumerate}
\item
\begin{picture}(85, 15)(0, 7)
\put(10, 10){\circle{10}}
\put(0, -2){\tiny $s-1$}
\put(40,10){\makebox(0,0)[c]{$\bigotimes$}}
\put(15, 10){\line(1,0){20}} 
\put(38, -2){\tiny $s$} 
\put(45, 10){\line(1, 0){20}} 
\put(70,10){\circle{10}} 
\put(62, -2){\tiny $s+1$} 
\put(77, 7){,}
\end{picture}
the higher order quantum Serre relations are
\vspace{2mm}
\begin{eqnarray}
\begin{aligned}
\bbe_s\bbe_{s-1,s,s+1}+\bbe_{s-1,s,s+1}\bbe_s=0,\quad \bbf_s\bbf_{s-1,s,s+1}+\bbf_{s-1,s,s+1}\bbf_s=0;\label{serreE2}
\end{aligned}
\end{eqnarray}
\item \begin{picture}(85, 15)(0, 7)
\put(10, 10){\circle{10}}
\put(0, -2){\tiny $s-1$}
\put(15, 10){\line(1, 0){20}}
\put(42, 10){\makebox(0,0)[c]{$\bigotimes$ }}
\put(38, -2){\tiny $s$}
\put(45, 11){\line(1, 0){20}}
\put(45, 9){\line(1, 0){20}}
\put(53, 7.3){$>$}
\put(70, 10){\circle{10}}
\put(62, -2){\tiny $s+1$}
\put(77, 7){,}
\end{picture}
the higher order quantum Serre relations are
\vspace{2mm}
\begin{eqnarray}
\begin{aligned}
\bbe_s \bbe_{s-1; s; s+1} +  \bbe_{s-1; s; s+1}\bbe_s=0,\quad
\bbf_s \bbf_{s-1; s; s+1} +  \bbf_{s-1; s; s+1}\bbf_s=0;\label{serreE3}
\end{aligned}
\end{eqnarray}

\item \begin{picture}(100, 25)(0, 7)
\put(10, 10){\circle{10}}
\put(0, -2){\tiny $s-1$}
\put(15, 10){\line(1, 0){20}}
\put(42, 10){\makebox(0,0)[c]{$\bigotimes$ }}
\put(38, -2){\tiny $s$}
\put(45, 10){\line(2, 1){20}}
\put(45, 10){\line(2, -1){20}}
\put(70, 20){\circle{10}}
\put(77, 18){\tiny $s+1$} 
\put(70, 0){\circle{10}}
\put(77, -3){\tiny $s+2$}
\put(95, 7){,}
\end{picture}
the higher order quantum Serre relations are
\vspace{4mm}
\begin{eqnarray}
\begin{aligned}
\bbe_s \bbe_{s-1; s; s+1} +  \bbe_{s-1; s; s+1}\bbe_s=0,\quad
\bbf_s \bbf_{s-1; s; s+1} +  \bbf_{s-1; s; s+1}\bbf_s=0,\\
\bbe_s \bbe_{s-1; s; s+2} +  \bbe_{s-1; s; s+2}\bbe_s=0, \quad
\bbf_s \bbf_{s-1; s; s+2} +  \bbf_{s-1; s; s+2}\bbf_s=0;\label{serreE4}
\end{aligned}
\end{eqnarray}
where
\[
\begin{aligned}
\bbe_{s-1; s; j} =&\bbe_{s-1}\left(\bbe_s \bbe_{j}-q_{j}^{a_{j s}} \bbe_{j} \bbe_s\right)-
q_{s-1}^{a_{s-1, s}}\left(\bbe_s \bbe_{j}-q_{j}^{a_{j s}} \bbe_{j} \bbe_s\right) \bbe_{s-1},\\
\bbf_{s-1; s; j} =&\bbf_{s-1}\left(\bbf_s \bbf_{j}-q_{j}^{a_{j s}} \bbf_{j} \bbf_s\right)-
q_{s-1}^{a_{s-1, s}}\left(\bbf_s \bbf_{j}-q_{j}^{a_{j s}} \bbf_{j} \bbf_s\right) \bbf_{s-1}.
\end{aligned}
\]
\end{enumerate}
For the other root data of $\mathfrak{g}$,  the higher order quantum Serre relations vary considerably with the choice of the root datum; thus,  we will not spell them out explicitly here.
\end{definition}
\begin{remark}\label{nonconju}
The definition of the quantum superalgebra above is dependent on the choice of the Borel subalgebras. Although the quantum superalgebras defined by non-conjugacy Borel subalgebras of a Lie superalgebra are not isomorphic as Hopf superalgebras, they are isomorphic as superalgebras; see \cite{KT} or \cite[Proposition 7.4.1]{Yamane3}.
\end{remark}
There is a unique automorphism $\omega$ of $\mathrm{U}_q(\mathfrak{g})$ such that $\omega(\bbe_i)=(-1)^{|\bbe_i|}\bbf_i$, $\omega(\bbf_i)=\bbe_i$ and $\omega(\bbk_{i})=\bbk_{i}^{-1}$ for $i\in\bbi$. The quantum superalgebra $\mathrm{U}_q(\mathfrak{g})$ has the structure of a $\mathbb{Z}_2$-graded Hopf algebra.
The co-multiplication
$$\Delta\colon\mathrm{U}_q(\mathfrak{g})\rightarrow \mathrm{U}_q(\mathfrak{g})\otimes \mathrm{U}_q(\mathfrak{g})$$
is given by
\begin{align}
\Delta(\bbe_i)=\bbk_i\otimes \bbe_i+\bbe_i\otimes 1,\quad
\Delta(\bbf_i)=1\otimes\bbf_i+ \bbf_i\otimes \bbk_i^{-1},\quad
\Delta(\bbk_i^{\pm1})=\bbk_i^{\pm1}\otimes \bbk_i^{\pm1},
\end{align}
for $i\in\bbi$ and the co-unit $\varepsilon\colon\mathrm{U}_q(\mathfrak{g})\rightarrow k$ is defined by
$$\varepsilon(\bbe_i)=\varepsilon(\bbf_i)=0,\quad \varepsilon(\bbk_i^{\pm1})=1, \text{ for } i\in\bbi, $$
then the corresponding antipode $S\colon\mathrm{U}_q(\mathfrak{g})\rightarrow \mathrm{U}_q(\mathfrak{g})$ is given by
\begin{align}
S(\bbe_i)=-\bbk_i^{-1}\bbe_i,\quad
S(\bbf_i)=-\bbf_i\bbk_i,\quad
S(\bbk_i^{\pm1})&=\bbk_i^{\mp1},\text{ for } i\in\bbi,\end{align}
which is a $\mathbb{Z}_2$-graded algebra anti-automorphism,  i.e., $S(xy)=(-1)^{|x||y|}S(y)S(x)$.

Denote by $\mathrm{U}^{\geqslant0}$ (resp., $\mathrm{U}^{\leqslant0}$) the sub-superalgebra of $\mathrm{U}_q(\mathfrak{g})$ generated by all $\bbe_i,\bbk_i^{\pm1}$ (resp., $\bbf_i,\bbk_i^{\pm1}$),  set $\mathrm{U}^0$ equal to the sub-superalgebra of $\mathrm{U}_q(\mathfrak{g})$ generated by all $\bbk_i^{\pm1}$, and denote by $\mathrm{U}^+$ (resp., $\mathrm{U}^-$) the sub-superalgebra of $\mathrm{U}_q(\mathfrak{g})$ generated by all $\bbe_i$ (resp., $\bbf_i$), it is well-known that $\mathrm{U}^+\otimes \mathrm{U}^0\cong\mathrm{U}^{\geqslant0}$ (resp., $\mathrm{U}^-\otimes \mathrm{U}^0\cong\mathrm{U}^{\leqslant0}$) by the multiplication map. And the multiplication map $\mathrm{U}^-\otimes \mathrm{U}^0\otimes \mathrm{U}^+\rightarrow \mathrm{U}$ is an isomorphism as super vector spaces.
\begin{remark}\label{skew-primitive}
Analogous to the quantum group,  the quantum Serre relations and the higher order quantum Serre relations can be explained from the view of skew primitive elements in the quantum superalgebras.  For example,
\begin{align*}
&\Delta(u^+_{ij})=u^+_{ij}\otimes 1+\bbk_i^{1-a_{ij}}\bbk_j\otimes u^+_{ij},
&&\Delta(u^-_{ij})=u^-_{ij}\otimes\bbk_{i}^{a_{ij}-1}\bbk_j^{-1}+1\otimes u^-_{ij},\\
&\Delta(u^+_B)=u^+_B\otimes 1+\bbk_{m-1}\bbk_m^3\otimes u^+_B,
&&\Delta(u^-_B)=1\otimes u^-_B+u^-_B\otimes \bbk_{m-1}^{-1}\bbk_m^{-3},\\
&\Delta(u^+)=u^+\otimes 1+\bbk_{m-1}\bbk_m^2\bbk_{m+1}\otimes u^+,
&&\Delta(u^-)=1\otimes u^-+u^-\otimes \bbk_{m-1}^{-1}\bbk_m^{-2}\bbk_{m+1}^{-1}.
\end{align*}
where $u^{\pm}_{ij}$ (resp. $u_{B}^{\pm}$) is on the left side of equations (\ref{serreE1}) and (\ref{serreF1}) for $i\ne j$ and even $\alpha_i$ (resp., for non-isotropic odd root $\alpha_i$ with $a_{ij}\ne 0$ for $i\ne j$), and $u^{\pm}$ is on the left side of equations (\ref{serreE2})-(\ref{serreE4}).
\end{remark}
Define $\bbk_{\mu}=\mathop{\prod}\limits_{i=1}^r\bbk_i^{m_i}~~\text{if}~~ \mu=\mathop{\sum}\limits_{i=1}^rm_i\alpha_i\in \mathbb{Z}\Phi$. Thus, $\bbk_{\mu}\bbk_{\mu'}=\bbk_{\mu+\mu'}$ for all $\mu,\mu'\in \mathbb{Z}\Phi$. Therefore, $\{\bbk_{\mu}\}_{\mu\in \mathbb{Z}\Phi}$ spans $\mathrm{U}^0$ as a vector space,  and
\begin{align*}
  \bbk_{\mu}\bbe_i\bbk_{\mu}^{-1}=q^{(\mu,\alpha_i)}\bbe_i, \ \bbk_{\mu}\bbf_i\bbk_{\mu}^{-1}=q^{-(\mu,\alpha_i)}\bbf_i.
\end{align*}
The quantum superalgebra $\mathrm{U}_q(\mathfrak{g})$ is $\mathbb{Z}\Phi$-graded.  And the gradation is given by
\begin{align*}
\mathrm{deg}~\bbk_{\mu}=0, \ \mathrm{deg}~\bbe_i=\alpha_i, \ \mathrm{deg}~\bbf_i=-\alpha_i,
\end{align*}
for all $\mu\in \mathbb{Z}\Phi$ and $i\in\bbi$. We denote that $\mathrm{U}_{\nu}$ is the $\nu\in\mathbb{Z}\Phi$-component if $\mathfrak{g}\neq A(n,n)$. 

Note that if $\mathfrak{g}=A(n,n)$,  the  simple roots for distinguished Borel subalgebra are not linearly independent (that is, $\gamma=\mathop{\sum}\limits_{i=1}^{2n+1}d_iJ_i\alpha_i=0$). This causes some technical difficulties. However, the quantum superalgebra $\mathrm{U}_q(\mathfrak{g})$ is also $\mathbb{Z}\tilde{\Phi}$-graded, where $\mathbb{Z}\tilde{\Phi}$ is a free abelian group generated by all simple roots $\alpha_1,\alpha_2,\cdots,\alpha_{2n+1}$. Obviously, $\mathbb{Z}\Phi=\mathbb{Z}\tilde{\Phi}/\mathbb{Z}\gamma$. 

Define $\mathrm{U}|_{\mu}$ (resp. $\mathrm{U}_{\nu}$) as the $\mu$-component (resp. $\nu$-component) with respect to $\mathbb{Z}\Phi$-gradation (resp. $\mathbb{Z}\tilde{\Phi}$-gradation). From now on, we do not make a distinction between the elements in $\mathbb{Z}\Phi$ and $\mathbb{Z}\tilde{\Phi}$ if no confusion emerges. Hence,  $\mathrm{U}|_{\mu}=\mathop{\bigoplus}\limits_{k\in\mathbb{Z}}\mathrm{U}_{\mu+k\gamma}$. Set
$$Q=
\begin{cases}
\mathbb{Z}\tilde{\Phi}, &\text{ for } \mathfrak{g}= A(n,n),\\
\mathbb{Z}\Phi, & \text{ otherwise}.
\end{cases}
$$

Note that $\mathfrak{h}^*=\mathbb{C}\Phi$. If $\mathfrak{g}\neq A(n,n)$, define the standard partial order relation on $\mathfrak{h}^*$ by $\lambda\leqslant\mu \Leftrightarrow \mu-\lambda\in \sum_{i\in\mathbb{I}}\mathbb{Z}_+\alpha_i$. This breaks down if $\mathfrak{g}=A(n,n)$ because $\gamma=0$ and $d_iJ_i\in\mathbb{Z}_+$ for all $i\in\mathbb{I}$. However, we can define a similar partial order on $\mathbb{C}\tilde{\Phi}$. From now on, we will use the partial order on $\mathbb{C}\tilde{\Phi}$ if necessary for $\mathfrak{g}=A(n,n)$.
\begin{remark}

The Lie superalgebra $A(n,n)$ is rather special, and the restriction of the Harish-Chandra projection determined by the distinguish triangular decomposition to the zero weight space (with respect to $\mathbb{Z}\Phi$-gradation) is not an algebra homomorphism; for more details,  see \cite[Section 6.1.4]{Gorelik}. For this reason, we do not expect that the projection from $\mathrm{U}|_0$ to $\mathrm{U}^0$ is an algebra homomorphism. However, the projection $\pi\colon\mathrm{U}_0\to \mathrm{U}^0$ is an algebra homomorphism. Fortunately, we can prove that $\mathcal{Z}$ is contained in $\mathrm{U}_0$; see Corollary \ref{centerU_0}. Therefore, we can establish the Harish-Chandra homomorphism for $\mathfrak{g}=A(n,n)$. 
\end{remark}
\section{Representation of quantum superalgebras}\label{Representations}
\subsection{Representations}
We will recall some basic facts about the representation theory of the quantum superalgebra $\mathrm{U}_q(\mathfrak{g})$. The bilinear form $(\text{-},\text{-})$ on $\mathbb{Z}\Phi$ can be linearly extended to $\mathfrak{h}^*$. For any $\lambda,\mu\in \mathfrak{h}^*$, denote $\langle\lambda,\mu \rangle=\frac{2(\lambda,\mu)}{(\mu,\mu)}$. Let $\Lambda=\left\{\left. \lambda\in\mathfrak{h}^*\right |\langle \lambda,\alpha\rangle\in\mathbb{Z},~ \forall\alpha\in\Phi_{\bar{0}}\right\}$ be the integral weight lattice,
 and denote $\Lambda^+=\left\{\left.\lambda\in\mathfrak{h}^*\right|\langle \lambda,\alpha  \rangle\in\mathbb{Z}_+,~\forall\alpha\in\Phi_{\bar{0}}^+\right\}$ to be the set of dominant integral weights. In this paper, all modules are of type {\bf 1} with the weights contained in $\frac{1}{2}\mathbb{Z}\Phi$.

A $\mathrm{U}_q(\mathfrak{g})$-module $M$ is called \textit{a weight module} if it admits a weight space decomposition
\begin{align}
M=\mathop{\bigoplus}\limits_{\lambda\in\mathfrak{h}^*}M_{\lambda},\quad \text{where } M_{\lambda}=\left\{\left.u\in M\right | \bbk_iu=q^{(\lambda,\alpha_i)}u, ~\forall i\in\bbi\right\}.
\end{align}
Define by $\mathrm{wt}(M)$ the set of weights of the finite-dimensional $\mathrm{U}_q(\mathfrak{g})$-module $M$. A weight module $M$ is called a \textit{highest weight module} with the highest weight $\lambda$ if there exists a unique non-zero vector $v_{\lambda}\in M$, which is called a \textit{highest weight vector} such that $\bbk_iv_{\lambda}=q^{(\lambda,\alpha_i)},~ \bbe_iv_{\lambda}=0$ for all $i\in\bbi$ and $M=\mathrm{U}_q(\mathfrak{g})v_{\lambda}$.

Let $J_{\lambda}=\mathop{\sum}\limits_{i=1}^r\mathrm{U}_q(\mathfrak{g})\bbe_i+\mathop{\sum}\limits_{i=1}^r\mathrm{U}_q(\mathfrak{g})(\bbk_i-q^{(\lambda,\alpha_i)})$ for $\lambda\in\Lambda$,  and set $\Delta_q(\lambda)=\mathrm{U}_q(\mathfrak{g})/J_{\lambda}$. This is a $\mathrm{U}_q(\mathfrak{g})$-module generated by the coset of 1; also denote this coset by $v_{\lambda}$. Obviously, $\bbe_iv_{\lambda}=0$ and $\bbk_iv_{\lambda}=q^{(\lambda,\alpha_i)}v_{\lambda}~\text{for } i\in\bbi$.  We call $\Delta_q(\lambda)$ the \textit{Verma module} of the highest weight $\lambda$. It has the following universal property: If $M$ is an $\mathrm{U}_q(\mathfrak{g})$-module with the highest weight vector $v\in M_{\lambda}$, then there is a unique homomorphism of $\mathrm{U}_q(\mathfrak{g})$-modules $\varphi: \Delta_q(\lambda)\rightarrow M$ with $\varphi(v_{\lambda})=v$.
The Verma module $\Delta_q(\lambda)$ has a unique maximal submodule, thus,  $\Delta_q(\lambda)$ admits a unique simple quotient $\mathrm{U}_q(\mathfrak{g})$-module $L_q(\lambda)$.
\begin{lemma}\label{embeding}
Let $\lambda\in \Lambda$ with $(\lambda,\alpha_s)=0$. Then there is a homomorphism of $\mathrm{U}_q(\mathfrak{g})$-modules $\varphi\colon\Delta_q(\lambda-\alpha_s)\rightarrow\Delta_q(\lambda)$ with $\varphi(v_{\lambda-\alpha_s})=\bbf_sv_{\lambda}$.
\end{lemma}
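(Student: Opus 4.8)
The plan is to realize the desired map as the unique $\mathrm{U}_q(\mathfrak{g})$-module homomorphism $\Delta_q(\lambda-\alpha_s)\to\Delta_q(\lambda)$ furnished by the universal property of Verma modules, so the entire task reduces to checking that the vector $v:=\bbf_sv_\lambda\in\Delta_q(\lambda)$ is a highest weight vector of weight $\lambda-\alpha_s$; that is, $v\neq 0$, $\bbk_iv=q^{(\lambda-\alpha_s,\alpha_i)}v$ and $\bbe_iv=0$ for all $i\in\bbi$. The weight computation is immediate from the $\mathbb{Z}\Phi$-grading: $\bbf_s$ has degree $-\alpha_s$, so $\bbk_iv=q^{(\alpha_i,\lambda-\alpha_s)}v$. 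Nonvanishing of $v$ follows because $\Delta_q(\lambda)\cong\mathrm{U}^-$ as a $\mathrm{U}^-$-module (via the triangular decomposition), $v_\lambda$ corresponds to $1$, and $\bbf_s\neq 0$ is part of a PBW-type basis of $\mathrm{U}^-$, hence $\bbf_s\cdot 1\neq 0$.

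The only substantive point is the vanishing $\bbe_iv=\bbe_i\bbf_sv_\lambda=0$ for every $i$. First I would dispose of $i\neq s$: since $\bbe_i$ and $\bbf_s$ supercommute in $\mathrm{U}_q(\mathfrak{g})$ by relation $\bbe_i\bbf_s-(-1)^{|\bbe_i||\bbf_s|}\bbf_s\bbe_i=\delta_{is}\frac{\bbk_s-\bbk_s^{-1}}{q_s-q_s^{-1}}$, we get $\bbe_i\bbf_sv_\lambda=\pm\bbf_s\bbe_iv_\lambda=0$ using $\bbe_iv_\lambda=0$. For $i=s$, using $(\alpha_s,\alpha_s)=0$ we have $|\bbe_s||\bbf_s|=\bar 1$, so
\begin{align*}
\bbe_s\bbf_sv_\lambda=-\bbf_s\bbe_sv_\lambda+\frac{\bbk_s-\bbk_s^{-1}}{q_s-q_s^{-1}}v_\lambda=\frac{q^{(\lambda,\alpha_s)}-q^{-(\lambda,\alpha_s)}}{q_s-q_s^{-1}}v_\lambda,
\end{align*}
which vanishes precisely because $(\lambda,\alpha_s)=0$. (Here one should note that although $(\alpha_s,\alpha_s)=0$, the symmetrizing scalar $d_s$ is a fixed nonzero rational and $q_s=q^{d_s}$ is a unit in $k$, so dividing by $q_s-q_s^{-1}$ is harmless; and the hypothesis $(\lambda,\alpha_s)=0$ makes the numerator zero regardless.) Thus $v$ generates a highest weight module and the universal property of $\Delta_q(\lambda-\alpha_s)$ produces the claimed $\varphi$ with $\varphi(v_{\lambda-\alpha_s})=\bbf_sv_\lambda$.

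I do not anticipate a real obstacle here: the lemma is essentially the quantum super-analogue of the classical fact that $f_\alpha v_\lambda$ spans a copy of a Verma module when $\alpha$ is isotropic and $(\lambda,\alpha)=0$, and every ingredient (supercommutation relation, PBW isomorphism $\Delta_q(\lambda)\cong\mathrm{U}^-$, universal property) is already available in the excerpt. The one place demanding a line of care is confirming $\bbf_sv_\lambda\neq 0$, for which invoking the $\mathrm{U}^-$-freeness of the Verma module and a PBW basis containing $\bbf_s$ suffices; alternatively one can exhibit a finite-dimensional or Verma-type module in which $\bbf_s$ acts nontrivially on the highest weight vector. I would state the proof in this order: weight of $v$, vanishing of $\bbe_i v$ (cases $i\neq s$ and $i=s$), nonvanishing of $v$, then apply the universal property.
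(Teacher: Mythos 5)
Your proposal is correct and follows essentially the same route as the paper: apply the universal property of the Verma module to $\bbf_s v_\lambda$, reduce to checking $\bbe_j\bbf_s v_\lambda=0$, handle $j\neq s$ by supercommutation and $j=s$ via the relation $[\bbe_s,\bbf_s]v_\lambda=\frac{\bbk_s-\bbk_s^{-1}}{q_s-q_s^{-1}}v_\lambda=0$ using $(\lambda,\alpha_s)=0$. Your extra remarks on the weight of $v$, the nonvanishing of $\bbf_s v_\lambda$, and the invertibility of $q_s-q_s^{-1}$ are fine but not needed beyond what the paper records.
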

\begin{proof}
We have $\bbf_sv_{\lambda}\in\Delta_q(\lambda)_{\lambda-\alpha_s}$. Therefore, the universal property of $\Delta_q(\lambda-\alpha_s)$ implies that it is enough to show that $\bbe_j\bbf_s v_{\lambda}=0$ for all $j\in\bbi$. This is obvious for $j\neq s$ because $\bbe_j$ and $\bbf_s$ commute. For $j=s$,  we have $\bbe_s\bbf_sv_{\lambda}=[\bbe_s,\bbf_s]v_{\lambda}-\bbf_s\bbe_sv_{\lambda}=\frac{\bbk_s-\bbk_s^{-1}}{q_s-q_s^{-1}}v_{\lambda}-0=0.$
\end{proof}

The finite-dimensional irreducible representations of $\mathrm{U}_q(\mathfrak{g})$ can be classified into two types: typical and atypical.  The representation theory of $\mathrm{U}_q(\mathfrak{g})$ at generic $q$ is rather similar to the Lie superalgebra $\mathfrak{g}$, as well. Geer proved the theorem that each irreducible highest weight module of a Lie superalgebra of Type A-G can be deformed to an irreducible highest weight module over the corresponding Drinfeld-Jimbo algebra; see \cite[Theorem 1.2]{Ge2}. We also refer to \cite[Proposition 3]{Zhang1}, \cite[Proposition 1]{Zhang2} and \cite[Theorem 4.2]{Kwon1} for quantum superalgebras of type $\mathrm{U}_q(\mathfrak{gl}_{m|n})$, $\mathrm{U}_q(\mathfrak{osp}_{2|2n})$ and $\mathrm{U}_q(\mathfrak{osp}_{m|2n})$, respectively.
\begin{theorem}\label{repquantum}
For $\lambda\in\mathfrak{h}^*$,  let $L(\lambda)$ be the irreducible highest weight module over $\mathfrak{g}$ of highest weight $\lambda$. Then there exists an irreducible highest weight module $L_q(\lambda)$ of highest weight $\lambda$ which is a deformation of $L(\lambda)$. Moreover, the classical limit of $L_q(\lambda)$ is $L(\lambda)$, and their (super)characters are equal \footnote{However, the inverse of the theorem is not true in general \cite{AYY}.  For example, there are many finite-dimensional irreducible modules (spinorial modules) of quantum superalgebras of type $\mathrm{U}_q(\mathfrak{osp}_{1|2})$ without classical limit; see \cite{Zhang} for more details.}.  
\end{theorem}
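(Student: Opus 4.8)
\textbf{Proof proposal for Theorem \ref{repquantum}.}

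The plan is to realize $L_q(\lambda)$ as the simple quotient of the Verma module $\Delta_q(\lambda)$ and then to control its size by a specialization ($q\to 1$) argument, comparing it with $\Delta(\lambda)$ and $L(\lambda)$ over $\mathfrak{g}$. First I would introduce an $\mathbf{A}$-form of the quantum superalgebra, where $\mathbf{A}=K[q^{\pm 1/2}]_{(q^{1/2}-1)}$ (or the analogous Lusztig-type integral form with divided powers), so that $\mathbf{A}/(q^{1/2}-1)\cong K$ and $\mathrm{U}_{\mathbf{A}}(\mathfrak{g})/(q^{1/2}-1)\mathrm{U}_{\mathbf{A}}(\mathfrak{g})\cong \mathrm{U}(\mathfrak{g})$. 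On the Verma module side this is straightforward: choosing the obvious $\mathbf{A}$-lattice $\Delta_{\mathbf{A}}(\lambda)=\mathrm{U}_{\mathbf{A}}^-v_\lambda$ inside $\Delta_q(\lambda)$, one has $\Delta_{\mathbf{A}}(\lambda)/(q^{1/2}-1)\Delta_{\mathbf{A}}(\lambda)\cong \Delta(\lambda)$ as $\mathfrak{g}$-modules, and each weight space is $\mathbf{A}$-free of the rank predicted by the PBW basis of $\mathrm{U}^-$, which is the same rank as for $\mathrm{U}(\mathfrak{n}^-)$; hence $\operatorname{ch}\Delta_q(\lambda)=\operatorname{ch}\Delta(\lambda)$, and likewise for supercharacters, by the PBW theorem for $\mathrm{U}^-$ quoted in Section \ref{Liequantum}.

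Next I would produce the simple module and its classical limit. The honest way is to equip $\Delta_q(\lambda)$ with a contravariant (Shapovalov-type) bilinear form $\langle\,,\,\rangle_q$, using the anti-automorphism built from $\omega$ and the antipode; its radical is the maximal submodule, so $L_q(\lambda)=\Delta_q(\lambda)/\operatorname{rad}\langle\,,\,\rangle_q$. Restricting the form to the $\mathbf{A}$-lattice $\Delta_{\mathbf{A}}(\lambda)$, its Gram determinant in each weight space is an element of $\mathbf{A}$ whose specialization at $q^{1/2}=1$ is (up to a unit) the classical Shapovalov determinant for $\mathfrak{g}$ in that weight space — this is where the genericity of $q$ and the hypothesis that $\lambda\in\Lambda$ enters, guaranteeing the determinant is nonzero over $k$ so that $L_q(\lambda)$ is a genuine deformation. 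Setting $L_{\mathbf{A}}(\lambda)$ to be the image of $\Delta_{\mathbf{A}}(\lambda)$ in $L_q(\lambda)$ (equivalently $\Delta_{\mathbf{A}}(\lambda)$ modulo the radical of the form restricted to the lattice), I would check that $L_{\mathbf{A}}(\lambda)$ is an $\mathbf{A}$-lattice in $L_q(\lambda)$ stable under $\mathrm{U}_{\mathbf{A}}(\mathfrak{g})$, and that $L_{\mathbf{A}}(\lambda)/(q^{1/2}-1)L_{\mathbf{A}}(\lambda)$ is a highest weight $\mathfrak{g}$-module of highest weight $\lambda$; this is the classical limit. Since specialization is right exact, this classical limit surjects onto $L(\lambda)$, so $\dim L(\lambda)_\mu\le \dim L_q(\lambda)_\mu$ for every weight $\mu$.

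The reverse inequality is the main obstacle, because a priori the radical of the quantum form could be strictly smaller than the specialization of the radical of the lattice form — i.e. the rank of the Gram matrix could drop on specialization. Here I would invoke exactly the external input the theorem cites: Geer's deformation theorem \cite[Theorem 1.2]{Ge2} (and the type-by-type statements \cite{Zhang1,Zhang2,Kwon1}) supplies, for each $\lambda$, an irreducible highest weight $\mathrm{U}_q(\mathfrak{g})$-module together with an $\mathbf{A}$-lattice whose classical limit is exactly $L(\lambda)$; by uniqueness of the simple highest weight quotient this module is $L_q(\lambda)$, forcing $\dim L_q(\lambda)_\mu\le \dim L(\lambda)_\mu$ and closing the loop. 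Combining the two inequalities gives $\operatorname{ch}L_q(\lambda)=\operatorname{ch}L(\lambda)$; tracking the $\mathbb{Z}_2$-grading (which is determined by $\mu$ once the parity of $v_\lambda$ is fixed, by $\mathbb{Z}_2$-homogeneity of the weight spaces) upgrades this to equality of supercharacters. The only subtlety worth flagging is the case $\mathfrak{g}=A(n,n)$: there one works with the $\mathbb{Z}\tilde\Phi$-grading and the partial order on $\mathbb{C}\tilde\Phi$ as set up in Section \ref{Liequantum}, and the relation $\mathrm{U}|_\mu=\bigoplus_k\mathrm{U}_{\mu+k\gamma}$ shows the character bookkeeping is unaffected, so the argument goes through verbatim.
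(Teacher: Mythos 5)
The paper does not prove Theorem \ref{repquantum} at all: it is imported verbatim from Geer \cite[Theorem 1.2]{Ge2} (together with the type-specific statements in \cite{Zhang1,Zhang2,Kwon1}), and your proposal ultimately does the same, since the one genuinely nontrivial step --- that the rank of the contravariant form does not drop under specialization, i.e.\ that the classical limit of $L_q(\lambda)$ is already irreducible --- is exactly what you delegate to Geer. So your treatment is consistent with the paper's; the surrounding $\mathbf{A}$-lattice and Shapovalov-form scaffolding is a standard and essentially correct account of how such deformation results are actually established. One parenthetical remark should be deleted, though: the claim that genericity of $q$ and $\lambda\in\Lambda$ make the Gram determinant ``nonzero over $k$'' cannot be right for general $\lambda\in\mathfrak{h}^*$ (it would force $\Delta_q(\lambda)=L_q(\lambda)$, false for atypical or non-dominant weights); fortunately nothing downstream in your argument depends on it, since the two inequalities you actually use come from right-exactness of specialization and from Geer's theorem respectively.
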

\subsection{Grothendieck ring}
Let $A$-{\bf mod} be the category of finite-dimensional modules of a superalgebra $A$. There is a parity reversing functor on this category. For an $A$-module $M=M_{\bar{0}}\oplus M_{\bar{1}}$, define
\begin{equation*}
\Pi(M)=\Pi(M)_{\bar{0}}\oplus \Pi(M)_{\bar{1}},\quad \Pi(M)_{i}=\Pi(M)_{i+\bar{1}},~\forall i\in\mathbb{Z}_2.
\end{equation*}
Then $\Pi(M)$ is also an $A$-module with the action $a m=(-1)^{|a|}m$. Let $\Pi$ be a 1-dimensional odd vector space with basis $\{\pi\}$, then $\Pi(M)\cong \Pi\otimes M$. Clearly, $\Pi^2=1$. Define the Grothendieck group $K(A)$ of $A$-{\bf mod} to be the abelian group generated by all objects in $A$-{\bf mod} subject to the following two relations: (i) $[M]=[L]+[N]$; (ii) $[\Pi(M)]=-[M]$, for all $A$-modules $L,M,N$ which satisfying a short exact sequence $0\to L\to M\to N\to 0$ with even morphisms.

It is easy to see that the Grothendieck group $K(A)$ is a free $\mathbb{Z}$-module
with the basis corresponding to the classes of the irreducible modules. Furthermore, if $A$ is a Hopf superalgebra, then for any $A$-modules $M$ and $N$, one can define the $A$-module structure on $M\otimes N$. Using this, we define the product on $K(A)$ by the formula
\begin{equation*}
[M][N]=[M\otimes N].
\end{equation*}

Since all modules are finite-dimensional, this multiplication is well-defined on
the Grothendieck group $K(A)$ and introduces the ring structure on it. The corresponding ring is called the \textit{Grothendieck ring} of $A$. The Grothendieck ring of $\mathrm{U}(\mathfrak{g})$ is denoted by $K(\mathfrak{g})$.
Let $K_{\mathrm{ev}}(\mathfrak{g})$ (resp. $K_{\mathrm{ev}}(\mathrm{U}_q(\mathfrak{g}))$) be the subring of $K(\mathfrak{g})$ (resp. $K(\mathrm{U}_q(\mathfrak{g}))$) generated by all objects in $\mathrm{U}(\mathfrak{g})$-{\bf mod} (resp. $\mathrm{U}_q(\mathfrak{g})$-{\bf mod}), whose weights are contained in $\Lambda\cap \frac{1}{2}\mathbb{Z}\Phi$. 

Let $M$ be a finite-dimensional representation of $\mathfrak{g}$ or $\mathrm{U}_q(\mathfrak{g})$. We define the \textit{character map} and the \textit{supercharacter map} as:
\begin{equation*}
\mathrm{ch}(M)=\mathop{\sum}\limits_{\lambda}\mathrm{dim}M_{\lambda}e^{\lambda},\quad\quad\mathrm{Sch}(M)=\mathop{\sum}\limits_{\lambda}\mathrm{sdim}M_{\lambda}e^{\lambda},
\end{equation*}
where $\mathrm{sdim}$ is the superdimension defined for any $\mathbb{Z}_2$-graded vector space $W=
W_0\oplus W_1$ as the difference of usual dimensions of graded components:
$\mathrm{sdim}W = \mathrm{dim}W_0-\mathrm{dim} W_1$.
\begin{proposition}\label{injgrothendieck}
There is an injective ring homomorphism $\jmath\colon K(\mathfrak{g})\to K(\mathrm{U}_q(\mathfrak{g}))$, which preserves (super)characters.
\end{proposition}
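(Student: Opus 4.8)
The plan is to build the map $\jmath$ by deforming modules and then to check it descends to the Grothendieck rings. First I would use Theorem \ref{repquantum}: for each $\lambda$ with $\dim L(\lambda)<\infty$, Geer's deformation provides a finite-dimensional irreducible $\mathrm{U}_q(\mathfrak{g})$-module $L_q(\lambda)$ whose supercharacter (and character) coincides with that of $L(\lambda)$. Since $K(\mathfrak{g})$ is free over $\mathbb{Z}$ with basis $\{[L(\lambda)] : \dim L(\lambda)<\infty\}$, I can define $\jmath$ on this basis by $\jmath([L(\lambda)])=[L_q(\lambda)]$ and extend $\mathbb{Z}$-linearly. By construction $\jmath$ is then a homomorphism of abelian groups and preserves both $\mathrm{ch}$ and $\mathrm{Sch}$, because these are determined by the weight multiplicities, which agree by Theorem \ref{repquantum}.

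Next I would verify $\jmath$ is a ring homomorphism. The point is that the structure constants for the product in $K(\mathfrak{g})$ are governed by weight multiplicities: writing $[L(\lambda)]\cdot[L(\mu)] = \sum_\nu c_{\lambda\mu}^\nu [L(\nu)]$ in $K(\mathfrak{g})$, the integers $c_{\lambda\mu}^\nu$ are recoverable from the (super)characters via the relation $\mathrm{Sch}(L(\lambda))\,\mathrm{Sch}(L(\mu)) = \sum_\nu c_{\lambda\mu}^\nu \,\mathrm{Sch}(L(\nu))$, using that distinct $L(\nu)$ have linearly independent supercharacters (this is exactly the content of Sergeev--Veselov's $\mathrm{Sch}$ being injective, referenced in the introduction, or can be argued directly by the highest-weight triangularity of the $\mathrm{Sch}(L(\nu))$). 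On the quantum side, the tensor product $L_q(\lambda)\otimes L_q(\mu)$ is finite-dimensional and semisimple (genericity of $q$), so $[L_q(\lambda)]\cdot[L_q(\mu)] = \sum_\nu d_{\lambda\mu}^\nu [L_q(\nu)]$ with $d_{\lambda\mu}^\nu$ determined in the same way from supercharacters. Since $\mathrm{Sch}(L_q(\lambda))=\mathrm{Sch}(L(\lambda))$ and likewise for $\mu$ and all $\nu$, and since $\mathrm{Sch}(L_q(\nu))$ are linearly independent, we get $d_{\lambda\mu}^\nu = c_{\lambda\mu}^\nu$, hence $\jmath([L(\lambda)][L(\mu)]) = \jmath([L(\lambda)])\jmath([L(\mu)])$. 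One also checks $\jmath([L(0)])=[L_q(0)]=[k]$ is the unit.

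Finally, injectivity is immediate: $\jmath$ sends the $\mathbb{Z}$-basis $\{[L(\lambda)]\}$ of $K(\mathfrak{g})$ to the distinct basis elements $\{[L_q(\lambda)]\}$ of $K(\mathrm{U}_q(\mathfrak{g}))$ (distinct because the $L_q(\lambda)$ are pairwise non-isomorphic, again detected by their supercharacters), so $\jmath$ maps a basis injectively into a linearly independent set.

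The main obstacle I anticipate is the ring-homomorphism step, specifically ensuring that the comparison of structure constants is legitimate. This rests on two facts that need care: that $L_q(\lambda)\otimes L_q(\mu)$ decomposes as a sum of the $L_q(\nu)$ appearing (i.e. complete reducibility of finite-dimensional modules at generic $q$, and that every composition factor is again of the deformed highest-weight type with $\nu$ in the relevant weight set), and that the supercharacters $\{\mathrm{Sch}(L_q(\nu))\}$ are $\mathbb{Z}$-linearly independent so that matching supercharacters forces matching multiplicities. Both are essentially standard for $\mathrm{U}_q(\mathfrak{g})$ at generic $q$ in analogy with the classical case, but they are exactly where the argument could be delicate for atypical weights; invoking Theorem \ref{repquantum} to transport everything to the classical setting, where Sergeev--Veselov guarantees independence of supercharacters, is the cleanest route.
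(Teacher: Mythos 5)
Your proposal is correct and follows essentially the same route as the paper: define $\jmath$ on the classes $[L(\lambda)]$ via Geer's deformation (Theorem \ref{repquantum}), deduce the ring-homomorphism property from preservation of supercharacters together with the linear independence (by highest-weight triangularity) of the supercharacters of distinct irreducibles, and get injectivity from the same triangularity; the paper's proof is just a terser version of this. One caveat: your assertion that $L_q(\lambda)\otimes L_q(\mu)$ is semisimple at generic $q$ is false for superalgebras (atypical blocks are not semisimple), but this is harmless here, since in the Grothendieck ring $[L_q(\lambda)\otimes L_q(\mu)]$ is by definition the sum of its composition factors, which is all your structure-constant comparison actually requires.
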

\begin{proof}
By Theorem \ref{repquantum}, we can define $\jmath([L(\lambda)])=[L_q(\lambda)]$ for all finite-dimensional irreducible $\mathfrak{g}$-modules $L(\lambda)$. This then induces an abelian group homomorphism from $K(\mathfrak{g})$ to  $K(\mathrm{U}_q(\mathfrak{g}))$. The map preserves (super)characters, so $\jmath$ is a ring homomorphism. Suppose there exist nonzero $a_i\in\mathbb{Z}$ and distinct $\lambda_i\in\mathfrak{h}^*$ for $i=1,2\cdots,n$ such that $\jmath(\mathop{\sum}\limits_{i=1}^na_i[L(\lambda_i)])=0$. Then $\mathrm{Sch}(\mathop{\sum}\limits_{i=1}^na_i[L(\lambda_i)])=0$. Choose $\lambda_j$ maximal in $\{\lambda_i\in\mathfrak{h}^*|i=1,2,\cdots,n\}$ for some $j$, then $a_j=0$ since $\mathrm{dim}(L(\lambda_i))_{\lambda_j}=\delta_{ij}$. This contradicts $a_j\neq0$. Thus, $\mathop{\sum}\limits_{i=1}^na_i[L(\lambda_i)]=0$.
\end{proof}

Sergeev and Veselov proved that the Grothendieck ring $K(\mathfrak{g})$ is isomorphic to the \textit{ring of exponential super-invariants} $J(\mathfrak{g})=\left\{\left. f\in\mathbb{Z}[P_0]^{W_0}\right|D_{\alpha}f\in(e^{\alpha}-1) \text{ for any isotropic root }\alpha \right\}$ for $\mathfrak{g}\neq A(1,1)$, where $D_{\alpha}(e^{\lambda})=(\lambda,\alpha)e^{\lambda}$, $\left\{\left. e^{\lambda}\right|\lambda\in P_0\right\}$ is a $\mathbb{Z}$-free basis of $\mathbb{Z}[P_0]$,  and here $P_0=\Lambda$ and $W_0=W$, more details could be found in \cite{SerVes}.

Define 
\begin{align}\label{J_q(g)}
J_{\mathrm{ev}}(\mathfrak{g})=\Bigg\{\mathop{\sum}\limits_{\mu\in 2\Lambda\cap \mathbb{Z}\Phi} a_{\mu}\bbk_{\mu}\in\mathrm{U}^0\Bigg|a_{w\mu}=a_{\mu}, ~\forall w\in W;~ D_{\alpha}(u)\in (\bbk_{\alpha}^2-1),~ \forall\alpha\in{\Phi}_{\mathrm{iso}}\Bigg\},
\end{align} 
where $D_{\alpha}(\bbk_{\mu})=(\mu,\alpha)\bbk_{\mu}$.

Obviously, there is an injective algebra homomorphism $\iota\colon J_{\mathrm{ev}}(\mathfrak{g})\rightarrow k\otimes_{\mathbb{Z}}J(\mathfrak{g})$ with $\iota(\bbk_{\mu})=e^{-\mu/2}$. This induces an isomorphism from $K_{\mathrm{ev}}(\mathfrak{g})$ to $ J_{\mathrm{ev}}(\mathfrak{g})$, hence we have the following commutative diagram:
\begin{equation*}
\xymatrix@C=50pt{
K(\mathrm{U}_q(\mathfrak{g}))&K(\mathfrak{g})\ar@{_{(}->}[l]_{\jmath}\ar[r]^{\cong}_{\mathrm{Sch}} &J(\mathfrak{g})  \\
 {K_{\mathrm{ev}}(\mathrm{U}_q(\mathfrak{g}))}\ar@{^{(}->}[u]&K_{\mathrm{ev}}(\mathfrak{g})\ar@{^{(}->}[u]\ar@{_{(}->}[l]\ar@{-->}[r]^{\cong} &J_{\mathrm{ev}}(\mathfrak{g})\ar@{^{(}-->}[u]_{\iota}
}
\end{equation*}
We remark that the above diagram is not true for $\mathfrak{g}=A(1,1)$. In Appendix \ref{appendixB}, we describe $J_{\mathrm{ev}}(\mathfrak{g})$ in sense of Sergeev and Veselov \cite{SerVes} and illustrate why $K_{\mathrm{ev}}(\mathfrak{g})\ncong J_{\mathrm{ev}}(\mathfrak{g})$ if $\mathfrak{g}=A(1,1)$.

\subsection{Some important propositions}
In this subsection, we investigate some important propositions, which show that the center of $\mathrm{U}_q(A(n,n))$ is contained in $\mathrm{U}^0$ and will be used to prove the injectivity of $\mathcal{HC}$. 

If $\mathfrak{g}$ is of type \RNum2, there exists a unique $\delta\in\Phi_{\bar{0}}^+$ such that $(\Pi\setminus\{\alpha_s\})\cup\{\delta\}$ is a simple root system of $\Phi_{\bar{0}}^+$. By writing $\delta=\mathop{\sum}\limits_{i=1}^rc_i\alpha_i$, we can get $c_s=2$. The following proposition is a super version of \cite[Section 3.2]{Tanisaki2} for quantum superalgebra $\mathrm{U}_q(\mathfrak{g})$ associated with a simple basic Lie superalgebra.
\begin{proposition}\label{3.3}
Set $\beta=\mathop{\sum}\limits_{i=1}^rm_i\alpha_i\in \mathbb{Z}_+\Pi$, and let $L_q(\lambda)$ be a typical finite-dimensional irreducible module. Suppose $\lambda$ satisfies\\
(i)  $\langle\lambda,\alpha_i\rangle\geqslant m_i$ for all $i\neq s$;\\
(ii) an  extra condition $2\langle\lambda+\rho,\delta\rangle \geqslant m_s+1$ when $\mathfrak{g}$ is of type \RNum2,\\
then $\mathrm{U}^-_{-\beta}\rightarrow L_q(\lambda)_{\lambda-\beta}$ with $u\mapsto uv_{\lambda}$ is bijective.
\end{proposition}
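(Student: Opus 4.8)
The statement asserts that the multiplication map $\mathrm{U}^-_{-\beta}\to L_q(\lambda)_{\lambda-\beta}$, $u\mapsto uv_\lambda$, is bijective under hypotheses (i) and (ii). Since this map is always surjective (the highest weight vector generates $L_q(\lambda)$ and lowering by $\mathrm{U}^-$), the whole content is injectivity, equivalently that no nonzero element of $\mathrm{U}^-_{-\beta}$ annihilates $v_\lambda$ in $L_q(\lambda)$. Equivalently, one must show that the canonical surjection $\Delta_q(\lambda)_{\lambda-\beta}\twoheadrightarrow L_q(\lambda)_{\lambda-\beta}$ is an isomorphism, i.e.\ the maximal submodule $N(\lambda)$ of $\Delta_q(\lambda)$ has zero intersection with the weight space $\Delta_q(\lambda)_{\lambda-\beta}$. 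The plan is to exploit that $L_q(\lambda)$ is typical and finite-dimensional, so its character is given by a Weyl--Kac type formula, and to compare dimensions of weight spaces.

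\textbf{Key steps.} First I would recall that for a typical finite-dimensional $L_q(\lambda)$, Theorem \ref{repquantum} gives $\mathrm{ch}\, L_q(\lambda)=\mathrm{ch}\, L(\lambda)$, and by Kac's typicality theorem the classical character factorizes as $\mathrm{ch}\, L(\lambda)=\bigl(\prod_{\alpha\in\Phi_{\bar 1}^+}(1+e^{-\alpha})\bigr)\cdot\mathrm{ch}\, L_{\bar 0}(\lambda)$, where $L_{\bar 0}(\lambda)$ is the finite-dimensional simple $\mathfrak{g}_{\bar 0}$-module of highest weight $\lambda$; equivalently $\mathrm{ch}\,L(\lambda)$ equals $\mathrm{ch}\,\Delta(\lambda)$ times the Weyl-numerator alternating sum $\sum_{w\in W}(-1)^{\ell(w)}e^{w(\lambda+\rho_{\bar 0})-(\lambda+\rho_{\bar 0})}$ over the even Weyl group. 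Second, I would make the comparison local to the weight $\lambda-\beta$: the coefficient of $e^{\lambda-\beta}$ in $\mathrm{ch}\,L_q(\lambda)=\mathrm{ch}\,L(\lambda)$ picks up contributions from $\mathrm{ch}\,\Delta(\lambda)$ (whose $(\lambda-\beta)$-coefficient is $\dim\mathrm{U}^-_{-\beta}$, since $\Delta_q(\lambda)\cong\mathrm{U}^-$ as graded vector spaces) at $e^{\lambda-\beta}$ together with ``correction'' terms coming from nontrivial $w\in W$ and from the odd roots. The hypotheses (i) and (ii) are precisely what is needed to guarantee that $w(\lambda+\rho_{\bar0})-(\lambda+\rho_{\bar0})$ is never $\geqslant -\beta$ in the partial order for $w\neq 1$ whose reflections are ``relevant'' to $\beta$; concretely, (i) controls the simple reflections $s_i$ for $i\neq s$ (one computes $s_i(\lambda+\rho_{\bar0})-(\lambda+\rho_{\bar0})=-\langle\lambda+\rho_{\bar0},\alpha_i\rangle\alpha_i$ and needs $\langle\lambda+\rho_{\bar0},\alpha_i\rangle>m_i$), while (ii) handles the root $\delta$ with $c_s=2$, whose reflection can lower by $-2\langle\lambda+\rho,\delta\rangle\,\delta$ and so contributes in $\alpha_s$-degree $2\langle\lambda+\rho,\delta\rangle$; requiring this to exceed $m_s$ (whence the $m_s+1$ in (ii), after matching $\rho$ versus $\rho_{\bar0}$ conventions via $\rho=\rho_{\bar0}-\rho_{\bar1}$) rules out those terms. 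Third, once all $w\neq1$ contributions to the $e^{\lambda-\beta}$-coefficient vanish, and since the odd-root factor $\prod(1+e^{-\alpha})$ only involves nonpositive exponents with the $e^0$-term equal to $1$, the $(\lambda-\beta)$-coefficient of $\mathrm{ch}\,L(\lambda)$ equals that of $\mathrm{ch}\,\Delta_q(\lambda)$, i.e.\ $\dim L_q(\lambda)_{\lambda-\beta}=\dim\mathrm{U}^-_{-\beta}=\dim\Delta_q(\lambda)_{\lambda-\beta}$. Hence the surjection $\Delta_q(\lambda)_{\lambda-\beta}\to L_q(\lambda)_{\lambda-\beta}$ is an isomorphism and the map in question is bijective.

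\textbf{Main obstacle.} The delicate point is the bookkeeping with the two Weyl vectors and the odd roots: one must verify that in the product of $\mathrm{ch}\,\Delta(\lambda)$ with the Weyl numerator and the odd Jacobian $\prod_{\alpha\in\Phi_{\bar1}^+}(1+e^{-\alpha})$, no combination of a shifted term $w(\lambda+\rho_{\bar0})-(\lambda+\rho_{\bar0})$, a partition into negative even roots, and a subset of negative odd roots can sum to exactly $\lambda-\beta$ except via $w=1$ and the empty odd subset. This requires pinning down, for each type \RNum1 and \RNum2 case, exactly which $w$ are dangerous and showing (i)--(ii) kill them; the type \RNum2 case with $\delta$ and $c_s=2$ is the subtle one and is exactly where condition (ii) enters, mirroring the argument in \cite[Section 3.2]{Tanisaki2} but now with the extra odd-root factor to track. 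I expect the remaining steps (surjectivity of the multiplication map, the identification $\Delta_q(\lambda)\cong\mathrm{U}^-$ as $Q$-graded spaces, and typicality giving the product character formula) to be routine given the results already cited in the excerpt.
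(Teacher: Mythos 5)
Your proposal follows essentially the same route as the paper: reduce to the equality $\dim\Delta_q(\lambda)_{\lambda-\beta}=\dim L_q(\lambda)_{\lambda-\beta}$, invoke the typical character formula, and show that conditions (i) and (ii) force every $w\neq 1$ term of the Weyl numerator to miss the weight $\lambda-\beta$ (with (ii) precisely handling $s_\delta$ via $c_s=2$); the paper completes the "all $w\neq1$" step by induction on $\ell(w)$ using dominance of $\lambda+\rho$. One caveat in your bookkeeping: the factorization $\mathrm{ch}\,L(\lambda)=\prod_{\alpha\in\Phi_{\bar1}^+}(1+e^{-\alpha})\,\mathrm{ch}\,L_{\bar0}(\lambda)$ with the even Weyl vector $\rho_{\bar0}$ is valid only in type \RNum1 (where $\rho_{\bar1}$ is $W$-invariant); in type \RNum2 you must use Kac's formula with the super Weyl vector $\rho$, i.e.\ the numerator $\sum_{w}(-1)^{\ell(w)}e^{w(\lambda+\rho)-\rho}$, which is exactly what the paper does and what makes condition (ii) come out with $\langle\lambda+\rho,\delta\rangle$ rather than $\langle\lambda+\rho_{\bar0},\delta\rangle$.
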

\begin{proof}
In the proof of this proposition, we choose $\lambda\in \mathbb{C}\otimes_{\mathbb{Z}} Q$ since the Verma module and simple module can be viewed as $Q$-graded modules. Notice that the partial order is well-defined on $Q$.

The canonical map from $\Delta_q(\lambda)$ to $L_q(\lambda)$ is surjective, which follows that every finite-dimensional irreducible module is a quotient of a Verma module. So we only need to prove $\mathrm{dim}\Delta_q(\lambda)_{\lambda-\beta}=\mathrm{dim}L_q(\lambda)_{\lambda-\beta}$, since $\mathrm{dim}\mathrm{U}^-_{-\beta}=\mathrm{dim}\Delta_q(\lambda)_{\lambda-\beta}$. The $\mathrm{dim}\Delta_q(\lambda)_{\lambda-\beta}$ is the coefficient of $e^{\lambda-\beta}$ in $\mathrm{ch} \Delta_q(\lambda)$, and $\mathrm{dim}L_q(\lambda)_{\lambda-\beta}$ is the coefficient of $e^{\lambda-\beta}$ in $\mathrm{ch} L_q(\lambda)$. 

The following character formulas of a Verma module and a typical finite-dimensional irreducible $\mathrm{U}_q(\mathfrak{g})$-module with the highest weight $\lambda$ are given by \cite[Theorem 1]{Kac2} and Theorem \ref{repquantum}:
\begin{align*}
\mathrm{ch} \Delta_q(\lambda)=\frac{\mathop{\Pi}_{\alpha\in\Phi^+_{\bar{1}}}(1+e^{-\alpha})}{\mathop{\Pi}_{\beta\in\Phi^+_{\bar{0}}}(1-e^{-\beta})}e^{\lambda},
\quad\quad
\mathrm{ch} L_q(\lambda)=\frac{\mathop{\Pi}_{\alpha\in\Phi^+_{\bar{1}}}(1+e^{-\alpha})}{\mathop{\Pi}_{\beta\in\Phi^+_{\bar{0}}}(1-e^{-\beta})}\mathop{\sum}\limits_{w\in W}(-1)^{l(w)}e^{w(\lambda+\rho)-\rho}.
\end{align*} 

Hence, it is sufficient to show $w(\lambda+\rho)-\rho-(\lambda-\beta)\notin \mathbb{Z}_+\Pi $ for all $w\neq 1$. Let us prove it by induction on $l(w)$.

If $\mathfrak{g}$ is of type \RNum1 and $l(w)=1$, then we have $w=s_i$ for some $i\neq s$, and hence
\begin{align*}
w(\lambda+\rho)-\rho-(\lambda-\beta)=-(\langle \lambda,\alpha_i\rangle+1)\alpha_i+\beta\notin \mathbb{Z}_+\Pi.
\end{align*}
Assume that $l(w)\geqslant 2$. There exist some $j\neq s$ and $w'\in W$ such that $w=s_jw'$ with $l(w')=l(w)-1$, and then it is known that $w'^{-1}(\alpha_j)\in\Phi^+_{\bar{0}}$. We have
\begin{align*}
w(\lambda+\rho)-\rho-(\lambda-\beta)=w'(\lambda+\rho)-\rho-(\lambda-\beta)-\langle\lambda+\rho,w'^{-1}(\alpha_j)\rangle \alpha_j,
\end{align*}
$w'(\lambda+\rho)-\rho-(\lambda-\beta)\notin \mathbb{Z}_+\Pi$ by induction and $\langle\lambda+\rho,w'^{-1}(\alpha_j)\rangle\geqslant0$ since $\lambda+\rho$ is dominant, so $w(\lambda+\rho)-\rho-(\lambda-\beta)\notin\mathbb{Z}_+\Pi$ for all $w\neq 1$.

If $\mathfrak{g}$ is of type \RNum2 and $l(w)=1$, then we have $w=s_i$ for some $i\neq s$ or $w=s_{\delta}$. By the same argument as above,  we only need to consider $w=s_{\delta}$.  Indeed,
\begin{align*}
w(\lambda+\rho)-\rho-(\lambda-\beta)=-\langle\lambda+\rho,\delta\rangle\delta+\beta\notin \mathbb{Z}_+\Pi.
\end{align*}
Assume $l(w)\geqslant2$. There exist some $j\neq s$ and $w'\in W$ such that $w=s_jw'$ or $w=s_{\delta}w'$ with $l(w')=l(w)-1$.  Then it is known that $w'^{-1}(\alpha_j)$ or $w'^{-1}(\delta)$ belongs to $\Phi_{\bar{0}}^+$. The proof is similar to type \RNum1 when $w=s_jw'$,  so we omit it here.
If $w=s_{\delta}w'$, then
\begin{align*}
w(\lambda+\rho)-\rho-(\lambda-\beta)=w'(\lambda+\rho)-\rho-(\lambda-\beta)-\langle\lambda+\rho,w'^{-1}(\delta)\rangle \delta.
\end{align*}
Once again, $w'(\lambda+\rho)-\rho-(\lambda-\beta)\notin \mathbb{Z}_+\Pi$ by induction and $\langle\lambda+\rho,w'^{-1}(\alpha_j)\rangle\geqslant0$ since $\lambda+\rho$ is dominant, so $w(\lambda+\rho)-\rho-(\lambda-\beta)\notin\mathbb{Z}_+\Pi$ for all $w\neq 1$.
\end{proof}
Let $\lambda\in\Lambda$ be a typical weight such that $L_q(\lambda)$ is finite-dimensional, then we can define a twisted action on $L_q(\lambda)$ via the automorphism $\omega$ of $\mathrm{U}_q(\mathfrak{g})$, denoted by $L^{\omega}_q(\lambda)$. Set $v_{\lambda}$ by $v'_{\lambda}$ when considered as an element of $L^{\omega}_q(\lambda)$. We then have $\bbk_{\mu}v'_{\lambda}=q^{-(\mu,\lambda)}v'_{\lambda}$ for all $\mu\in \mathbb{Z}\Phi$. Furthermore, we have $\bbf_iv'_{\lambda}=0$ for all $i\in\mathbb{I}$, and $x\mapsto xv'_{\lambda}$ maps each $\mathrm{U}^+_{\nu}$ onto $L_q^{\omega}(\lambda)_{-\lambda+\nu}$.

Similarly, if $\langle \lambda,\alpha_i\rangle\geqslant m_i,~\forall i\neq s$ and $\lambda$ satisfies an extra condition $2\langle\lambda+\rho,\delta\rangle\geqslant m_s+1$ for $\mathfrak{g}$ is of type \RNum2, then the map $\mathrm{U}_{\nu}^+\to L_q^{\omega}(\lambda)_{-\lambda+\nu}$ with $x\mapsto xv'_{\lambda}$ is bijective.
\begin{theorem}\label{annihilate}
Let $u\in\mathrm{U}$. If $u$ annihilates all finite-dimensional $\mathrm{U}$-modules, then $u=0$.
\end{theorem}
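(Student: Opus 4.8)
The plan is to show that if $u$ acts as zero on every finite-dimensional $\mathrm{U}$-module, then $u$ must already be zero in $\mathrm{U}$, by testing $u$ against the images $x v'_\lambda$ inside the twisted modules $L^\omega_q(\lambda)$ for a sufficiently rich supply of typical weights $\lambda$. First I would reduce to the case where $u$ is $Q$-homogeneous of weight $\eta$ (equivalently $\mathbb{Z}\tilde\Phi$-homogeneous when $\mathfrak{g}=A(n,n)$): since $u$ decomposes as a finite sum $u=\sum_\eta u_\eta$ and each graded component acts by a distinct weight shift, if $u$ kills all finite-dimensional modules then so does each $u_\eta$. Next, using the triangular decomposition $\mathrm{U}^-\otimes\mathrm{U}^0\otimes\mathrm{U}^+\xrightarrow{\sim}\mathrm{U}$, write $u=\sum_{a} x^-_a\,g_a(\mathbb{K})\,x^+_a$ with the $x^+_a$ running over a PBW basis of $\mathrm{U}^+$ (and correspondingly $x^-_a$ over a PBW basis of $\mathrm{U}^-$), the $g_a(\mathbb{K})\in\mathrm{U}^0$, and all summands of fixed weight $\eta$; our goal is to force every $x^-_a\,g_a(\mathbb{K})\,x^+_a=0$.

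The key idea is to exploit the bijectivity statement recorded just before the theorem: for $\lambda\in\Lambda$ typical with $\langle\lambda,\alpha_i\rangle\geqslant m_i$ for all $i\neq s$ (and the extra type-\RNum2 condition), the map $\mathrm{U}^+_\nu\to L^\omega_q(\lambda)_{-\lambda+\nu}$, $x\mapsto xv'_\lambda$, is an isomorphism, and moreover $\mathbb{K}_\mu v'_\lambda=q^{-(\mu,\lambda)}v'_\lambda$ and $\mathbb{F}_i v'_\lambda=0$. Apply $u$ to $v'_\lambda$: all terms with $x^+_a\neq 1$ (i.e. of positive $\mathrm{U}^+$-degree) may still survive because $x^+_a v'_\lambda\neq 0$ in general, so instead I would apply $u$ to vectors of the form $y\,v'_\lambda$ with $y\in\mathrm{U}^+$ ranging over a PBW basis, and read off, via the isomorphism $\mathrm{U}^+\xrightarrow{\sim}L^\omega_q(\lambda)$ valid in the relevant weight range, the resulting identities in $\mathrm{U}^+$. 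Concretely, $0=u\,(y v'_\lambda)=\sum_a x^-_a g_a(\mathbb{K}) x^+_a y\,v'_\lambda$; the scalars $g_a$ contribute factors $q^{-(\cdot,\lambda)}$ depending on the weight of $x^+_a y$, and since this must hold for all typical $\lambda$ in a Zariski-dense (indeed cofinite) set of the weight lattice, a Vandermonde/linear-independence-of-characters argument in the variables $q^{(\mu,\lambda)}$ separates the contributions of distinct $\mathrm{U}^0$-weights, reducing to the statement that $\sum_{a:\,g_a\text{ fixed weight}} x^-_a\otimes x^+_a$ acts as zero on $L^\omega_q(\lambda)$ for all large typical $\lambda$. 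Finally, using that $x\mapsto xv'_\lambda$ is injective on $\mathrm{U}^+$ in the relevant degrees, together with the analogous surjectivity/injectivity of $\mathrm{U}^-_{-\beta}\to L_q(\lambda)_{\lambda-\beta}$ from Proposition \ref{3.3} applied on the other side (or the PBW-freeness of $\mathrm{U}^-$ over $\mathrm{U}^0$), one concludes $x^-_a g_a(\mathbb{K}) x^+_a=0$ for each $a$, whence $u=0$.

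The main obstacle I anticipate is handling the interaction between the $\mathrm{U}^0$-part $g_a(\mathbb{K})$ and the other factors cleanly: because $\mathbb{K}_\mu$ does not commute with $\mathbb{E}_i,\mathbb{F}_i$, one must be careful to normalize $u$ into a definite order (say all $\mathbb{K}$'s in the middle) before evaluating, and then track how the weight-dependent scalars $q^{-(\mu,\lambda)}$ combine across the sum; showing these scalars are "generic enough" as $\lambda$ varies over typical dominant-like weights — in particular that there are infinitely many such $\lambda$ even in the type-\RNum1 case where the extra hook/finiteness constraints bite — is the delicate point, and it is exactly where Proposition \ref{3.3} and the paragraph on $L^\omega_q(\lambda)$ do the essential work. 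A secondary technical wrinkle is the case $\mathfrak{g}=A(n,n)$, where one must run the whole argument with the $\mathbb{Z}\tilde\Phi$-grading and the partial order on $\mathbb{C}\tilde\Phi$, but this is a notational adjustment rather than a new idea.
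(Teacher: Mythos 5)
There is a genuine gap, and it sits exactly at the point you flag as the ``main obstacle.'' Your plan evaluates $u=\sum_a x^-_a\,g_a(\mathbb{K})\,x^+_a$ on vectors $y\,v'_\lambda$ inside a \emph{single} twisted module $L^{\omega}_q(\lambda)$. In that module the positive part acts freely on $v'_\lambda$ (that is the content of the remark after Proposition \ref{3.3}), but the negative part does not: $\mathbb{F}_i v'_\lambda=0$, and more generally $x^-_a$ lowers the $\mathrm{U}^+$-degree. So to compute $x^-_a\bigl(g_a(\mathbb{K})x^+_a y\,v'_\lambda\bigr)$ you must commute $x^-_a$ past $x^+_a y$; the surviving contribution is a sum of cross terms produced by the relations $[\mathbb{E}_i,\mathbb{F}_j]=\delta_{ij}(\mathbb{K}_i-\mathbb{K}_i^{-1})/(q_i-q_i^{-1})$, and all summands $a$ with the same total weight $\eta=\nu(x^+_a)-\nu(x^-_a)$ land in one and the same weight space $L^{\omega}_q(\lambda)_{-\lambda+\nu(y)+\eta}$. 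There is no projection available in a single module that isolates the terms with a fixed $\nu(x^+_a)$, so the Vandermonde argument in $q^{(\mu,\lambda)}$ only separates the $\mathrm{U}^0$-exponents $\mu$, not the pairs $(x^-_a,x^+_a)$; and your final appeal to ``injectivity of $\mathrm{U}^-_{-\beta}\to L_q(\lambda)$ on the other side'' does not apply, because in $L^{\omega}_q(\lambda)$ the map $y\mapsto y\cdot m$ for $y\in\mathrm{U}^-$ is very far from injective. As written, the argument shows only that a scrambled combination of commutators vanishes, which is much weaker than $x^-_a g_a(\mathbb{K})x^+_a=0$.

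The missing idea in the paper's proof (following Jantzen's treatment for quantum groups) is to test $u$ not on a single module but on the tensor products $L_q(\lambda)\otimes L^{\omega}_q(\lambda')$, evaluated at $v_\lambda\otimes v'_{\lambda'}$. The comultiplication sends $\mathbb{K}_\mu x_i$ to a scalar times $v_\lambda\otimes x_i v'_{\lambda'}$, and $\Delta(y_j)=y_j\otimes \mathbb{K}^{-1}_{\nu'(j)}+(\text{lower order})$, so to leading order $y_j\mathbb{K}_\mu x_i(v_\lambda\otimes v'_{\lambda'})$ is a scalar times $y_j v_\lambda\otimes x_i v'_{\lambda'}$: the negative part acts \emph{freely on the first factor} (Proposition \ref{3.3}) and the positive part acts \emph{freely on the second factor}, so the vectors $y_j v_\lambda\otimes x_i v'_{\lambda'}$ are linearly independent. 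A maximality argument on $\nu_0=\nu(i)$ disposes of the non-leading terms of $\Delta(y_j)$, and only then does the non-degeneracy of $(\text{-},\text{-})$ on $\mathbb{Z}\Phi\times\Lambda^+_N$ kill the coefficients $a_{j,\mu,i}$. Your reduction to homogeneous $u$, your identification of Proposition \ref{3.3} and the twisted modules as the key inputs, and your remark about running everything in the $\mathbb{Z}\tilde{\Phi}$-grading for $A(n,n)$ are all correct, but without the two-factor separation the proof does not close.
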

\begin{proof}
For any typical weights $\lambda,\lambda'\in\Lambda$ such that $L_q(\lambda)$ and $L^{\omega}_q(\lambda')$ are finite-dimensional, the tensor product $L_q(\lambda)\otimes L^{\omega}_q(\lambda')$ is also a finite-dimensional $\mathrm{U}_q(\mathfrak{g})$-module. Suppose that $u\in\mathrm{U}_q(\mathfrak{g})$ annihilates all these tensor products, in particular $u(v_{\lambda}\otimes v'_{\lambda'})=0$ for all $\lambda$ and $\lambda’$. We show that this implies $u=0$.

Choose bases $\{x_i\}$ of $\mathrm{U}^+$ and $\{y_j\}$ of $\mathrm{U}^-$ consisting of homogeneous weight vectors. This means $x_i\in\mathrm{U}^+_{\nu(i)}$ and $y_j\in\mathrm{U}^-_{-\nu'(j)}$ with $\mu(i)$ and $\nu'(j)$ in $\mathbb{Z}_+\Pi$. Write
\begin{align*}
u=\mathop{\sum}\limits_j\mathop{\sum}\limits_{\mu}\mathop{\sum}\limits_i a_{j,\mu,i}y_j\bbk_{\mu}x_i
\end{align*}
with $a_{j,\mu,i}\in k$, which is a finite sum. Suppose that $u\neq0$. Let $\nu_0\in\mathbb{Z}_+\Pi$ be maximal among the weights $\nu$ such that there exist $i,\mu,j$ with $a_{j,\mu,i}\neq0$ and $\nu=\nu(i)$.

So we have\begin{align*}
\bbk_{\mu}x_i(v_{\lambda}\otimes v'_{\lambda'})=q^{(\nu(i),\lambda)+(\mu,\lambda-\lambda'+\nu(i))}v_{\lambda}\otimes x_iv'_{\lambda'}.
\end{align*}
Each $\Delta(y_j)$ is equal to $y_j\otimes \bbk^{-1}_{\nu'(j)}$ plus a sum of terms in $\mathrm{U}^-\otimes \mathrm{U}^0\mathrm{U}^-_{<0}$. This implies that
\begin{align*}
y_j\bbk_{\mu}x_i(v_{\lambda}\otimes v'_{\lambda'})=q^{(\nu(i),\lambda)+(\mu,\lambda-\lambda'+\nu(i))-(\nu'(j),-\lambda'+\nu(i))}y_jv_{\lambda}\otimes x_iv'_{\lambda'}+(\ast),
\end{align*}
where $(\ast)$ is a sum of terms from a certain $L_q(\lambda)\otimes L^{\omega}_q(\lambda')_{-\lambda'+\nu}$ with $\nu\neq \nu(i)$.

The maximality of $\nu_0$ implies that $y_j\bbk_{\mu}x_i(v_{\lambda}\otimes v'_{\lambda'})$ has a component in $L_q(\lambda)\otimes L^{\omega}_q(\lambda')_{-\lambda'+\nu_0}$ only for $\nu(i)=\nu_0$. Therefore,  the projection of $u(v_{\lambda}\otimes v'_{\lambda'})$ onto $L_q(\lambda)\otimes  L^{\omega}_q(\lambda')_{-\lambda'+\nu_0}$ is equal to
\begin{align}
\mathop{\sum}\limits_{j,\mu,i;\nu(i)=\nu_0} a_{j,\mu,i}  q^{(\nu_0,\lambda)(\mu,\lambda-\lambda'+\nu_0)-(\nu'(j),-\lambda'+\nu_0)}y_jv_{\lambda}\otimes x_iv'_{\lambda'},\label{5.1}
\end{align}
since we assume that $u(v_{\lambda}\otimes v'_{\lambda'})=0$, this projection is also equal to $0$.

We can find an integer $N>0$ such that\begin{align*}
\nu_0<\mathop{\sum}\limits_{\alpha\in\Pi}N\alpha \quad\quad \text{and }\quad\quad \nu'(j)<\mathop{\sum}\limits_{\alpha\in \Pi} N\alpha 
\end{align*}
for all $j$. Set
\begin{align*}
\Lambda^+_N=\left\{ \lambda\in \Lambda\left|\begin{array}{c}
~\lambda ~\text{is typical,}~ L_q(\lambda) ~\text{is finite-dimensional,}~ \langle\lambda,\alpha_i\rangle>N ~\text{for all} ~i\neq s\\
\text{and plus an extra condition}~  2\langle\lambda+\rho,\delta\rangle>N+1~\text{if}~\mathfrak{g}~\text{is of type \RNum2}
\end{array}
\right.\right\}.
\end{align*}
By the same argument before the proposition, we know that the map $\mathrm{U}^+_{\nu_0}\rightarrow L^{\omega}_q(\lambda')_{\lambda'-\nu_0}, x\mapsto xv'_{\lambda'}$ is bijective for all $\lambda'\in\Lambda^+_N$.  Thus, the elements $x_iv'_{\lambda'}$ with $\nu(i)=\nu_0$ are linearly independent. Therefore,  the vanishing of the sum in (\ref{5.1}) implies (for all $\lambda'\in\Lambda^+_N$)\begin{align}
\mathop{\sum}\limits_{j,\mu}a_{j,\mu,i}q^{(\nu_0,\lambda)+(\mu,\lambda-\lambda'+\nu_0)-(\nu'(j),-\lambda'+\nu_0)}y_jv_{\lambda}=0,\label{5.2}
\end{align}
for all $i$ with $\nu(i)=\nu_0$.

The statement before this theorem implies that all $y_jv_{\lambda}$ with nonzero coefficients $a_{j,\mu,i}$ occuring in (\ref{5.2}) are linearly independent for all $\lambda\in\Lambda^+_N$. So we get from (\ref{5.2})
\begin{align}
\mathop{\sum}\limits_{\mu}a_{j,\mu,i}q^{(\nu_0,\lambda)+(\mu,\lambda-\lambda'+\nu_0)-(\nu'(j),-\lambda'+\nu_0)}=0, \label{5.3}
\end{align} 
for all $i,j$ with $\nu(i)=\nu_0$. We can cancel the (nonzero) factor $q^{(\nu_0,\lambda)-(\nu'(j),-\lambda'+\nu_0)}$ in (\ref{5.3}), which does not depend on $\mu$,  and get
\begin{align}
\mathop{\sum}\limits_{\mu}a_{j,\mu,i}q^{(\mu,\nu_0-\lambda')}q^{(\mu,\lambda)}=0,\label{5.4}
\end{align}
for all $i,j$ with $\nu(i)=\nu_0$ and all $\lambda,\lambda'\in\Lambda^+_N$. Now,  fix $\lambda'$ and notice that $(\text{-},\text{-})$ on $\mathbb{Z}\Phi\times \Lambda_N^+$ is non-degenerate in the first component for all $N$, thus the coefficients $a_{j,\mu,i}q^{(\mu,\nu_0-\lambda')}$
in (\ref{5.4}) are all equal to $0$. This implies that $a_{j,\mu,i}=0$ for all $i,j,\mu$ with $\nu(i)=\nu_0$, contradicting the choice of $\nu_0$. Therefore, $u=0$.
\end{proof}
One can check Proposition \ref{3.3} and Theorem \ref{annihilate} hold if $\mathfrak{g}=A(n,n)$ since $\mathbb{Z}\tilde{\Phi}$ has a partial order. Next, we strengthen Theorem \ref{annihilate} for $\mathfrak{g}=A(n,n)$.
\begin{theorem}\label{annihilate A(n,n)}
Let $u\in \mathrm{U}_q(A(n,n))$. If $u$ annihilates all typical finite-dimensional irreducible $\mathrm{U}_q(A(n,n))$-modules, then $u=0$.
\end{theorem}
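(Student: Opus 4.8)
The plan is to re-use the proof of Theorem~\ref{annihilate}: the only finite-dimensional modules occurring there are the tensor products $L_q(\lambda)\otimes L_q^{\omega}(\lambda')$ with $\lambda,\lambda'$ deep dominant typical weights, and the only fact about such a module that is actually used is that $u$ kills the single vector $v_\lambda\otimes v'_{\lambda'}$; moreover $\lambda'$ may be fixed once and for all (deep enough relative to $u$), only $\lambda$ needing to range over a cofinal subset of some $\Lambda^+_N$. Hence it suffices to show: for a suitable fixed typical $\lambda'$ and for all $\lambda$ in $\Lambda^+_N$ outside a finite union of affine hyperplanes, the module $L_q(\lambda)\otimes L_q^{\omega}(\lambda')$ is semisimple with every composition factor typical. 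Granting this, $u$ annihilates each such factor by hypothesis, hence annihilates $L_q(\lambda)\otimes L_q^{\omega}(\lambda')$, and the rest of the argument of Theorem~\ref{annihilate} goes through verbatim, yielding $u=0$.

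To control the composition factors I would use the Kac-type character formula of Theorem~\ref{repquantum} together with the $W$-invariance of the weight multiplicities of $L_q^{\omega}(\lambda')$. Since $\lambda$ is typical, $\mathrm{ch}\,L_q(\lambda)=F\sum_{w\in W}(-1)^{l(w)}e^{w(\lambda+\rho)}$ with $F$ independent of $\lambda$, and a reindexing of the product of characters yields
\[
\mathrm{ch}\bigl(L_q(\lambda)\otimes L_q^{\omega}(\lambda')\bigr)=\sum_{\eta}(\dim L_q^{\omega}(\lambda')_{\eta})\cdot F\sum_{w\in W}(-1)^{l(w)}e^{w(\lambda+\eta+\rho)},
\]
a finite sum over the weights $\eta$ of $L_q^{\omega}(\lambda')$. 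Choosing $N$ large (relative to the fixed $\lambda'$) makes each $\lambda+\eta$ dominant and regular for $\lambda\in\Lambda^+_N$; and $\lambda+\eta$ is typical unless $(\lambda+\eta+\rho,\beta)=0$ for some positive isotropic root $\beta$, i.e.\ unless $\lambda$ lies on one of the finitely many hyperplanes constituting a set $H(\lambda')$. For $\lambda\in\Lambda^+_N\setminus H(\lambda')$ the right-hand side above equals $\sum_{\eta}(\dim L_q^{\omega}(\lambda')_{\eta})\,\mathrm{ch}\,L_q(\lambda+\eta)$ with all $\lambda+\eta$ typical, so $L_q(\lambda)\otimes L_q^{\omega}(\lambda')$ has only typical composition factors. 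As typical finite-dimensional irreducible $\mathrm{U}_q(A(n,n))$-modules are injective in the category of finite-dimensional modules (the quantum counterpart of Kac's theorem, which can be deduced from the classical statement via the deformation correspondence of Theorem~\ref{repquantum}), any such module is semisimple, and hence is annihilated by $u$.

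It then remains to rerun the proof of Theorem~\ref{annihilate} with $\lambda'$ the fixed weight above and $\lambda$ restricted to $\Lambda^+_N\setminus H(\lambda')$. The bijections $\mathrm{U}^+_{\nu_0}\to L_q^{\omega}(\lambda')_{\lambda'-\nu_0}$ and $\mathrm{U}^-_{-\beta}\to L_q(\lambda)_{\lambda-\beta}$ remain valid by Proposition~\ref{3.3} and the remark following it, so equations~(\ref{5.1})--(\ref{5.4}) are untouched. In the concluding step one must deduce, from the vanishing of $\sum_{\mu}a_{j,\mu,i}q^{(\mu,\nu_0-\lambda')}q^{(\mu,\lambda)}$ for all $\lambda\in\Lambda^+_N\setminus H(\lambda')$, that all the $a_{j,\mu,i}$ with $\nu(i)=\nu_0$ vanish: picking a lattice direction $v$ along which the finitely many numbers $(\mu,v)$ are pairwise distinct, and letting $\lambda$ run along $v$ (which meets $H(\lambda')$ only finitely often), a Vandermonde argument does this. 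This contradicts the maximality of $\nu_0$, so $u=0$.

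The main obstacle — and the only essential departure from Theorem~\ref{annihilate} — is the semisimplicity reduction: one must pin down exactly which highest weights can appear among the composition factors of $L_q(\lambda)\otimes L_q^{\omega}(\lambda')$, so that atypicality is ruled out by only finitely many hyperplanes uniformly in $\lambda$, and one must invoke the projectivity/injectivity of typical modules for $A(n,n)$. The remaining modifications are bookkeeping: removing finitely many hyperplanes from the already ``deep'' set $\Lambda^+_N$ disturbs none of the non-degeneracy or linear-independence assertions used in the proof of Theorem~\ref{annihilate}.
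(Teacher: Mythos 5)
Your proposal is correct and rests on the same two pillars as the paper's proof: (a) typical finite-dimensional irreducibles are injective/projective among finite-dimensional modules, so a module all of whose composition factors are typical is semisimple and hence killed by $u$; and (b) the composition factors of $L_q(\lambda)\otimes L_q^{\omega}(\lambda')$ have highest weights in $\lambda+\mathrm{wt}\bigl(L_q^{\omega}(\lambda')\bigr)$, so atypicality of a factor confines $\lambda$ to finitely many hyperplanes once $\lambda'$ is controlled. Where you differ is in the quantifiers and in how (b) is obtained. The paper fixes one deep $\lambda$ and exhibits an explicit finite set $\{\tilde\lambda+\alpha_i\}$ of weights $\lambda'$ spanning $\mathfrak{h}^*$ for which the tensor product is completely reducible, citing Serganova for the location of the composition factors; you instead fix $\lambda'$ and let $\lambda$ range over $\Lambda^+_N$ minus the finite union $H(\lambda')$, deriving (b) from the typical character formula together with $W$-invariance of $\mathrm{ch}\,L_q^{\omega}(\lambda')$ and linear independence of irreducible characters. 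Your arrangement actually matches the final step of Theorem \ref{annihilate} more directly, since in equation (\ref{5.4}) it is $\lambda$ (not $\lambda'$) that must range over a set large enough to separate the exponentials $q^{(\mu,\lambda)}$, and your Vandermonde argument along a generic lattice ray supplies exactly that; the paper's finite spanning set of $\lambda'$'s leaves this separation step more implicit. One caveat: your justification of the injectivity of typical modules for $\mathrm{U}_q(A(n,n))$ "via the deformation correspondence of Theorem \ref{repquantum}" is not actually a proof, since injectivity is a categorical property that does not formally transfer along a character-preserving bijection of simples; but the paper invokes the same fact without proof, so you are using the same external input, just with a weaker gloss on its provenance.
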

\begin{proof}
It is known that if a typical irreducible module $L_q(\lambda)$ is a composition factor of a finite-dimensional module $M$, then $L_q(\lambda)$ is a direct summand of $M$. By the proof of Theorem \ref{annihilate}, we only need to prove the following claim.
 
For all $N>n$, there exists $\lambda\in \Lambda_N^+$ such that the set
\begin{equation*}
\left\{\left.\lambda'\in\Lambda_N^+ \right | L_q(\lambda)\otimes L^{\omega}_q(\lambda')\text{ is completely reducible} \right\}
\end{equation*}
could linearly span $\mathfrak{h}^*$.

If it is true, then $L_q(\lambda)\otimes L^{\omega}_q(\lambda')$ is completely reducible if all weights in $\lambda+\mathrm{wt}\left(L^{\omega}_q(\lambda')\right)$ are typical. Because the composition factors of $L_q(\lambda)\otimes L^{\omega}_q(\lambda')$ are the form of $L_q(\bar{\lambda})$ with $\bar{\lambda}\in \lambda+\mathrm{wt}\left(L^{\omega}_q(\lambda')\right)$ \cite[Corollary 5.2]{Serganova}.\\
%
\textbf{\textit{Proof of the claim:}} Let $\tilde{\lambda}=\mathop{\sum}\limits_{i=1}^{n+1}\big((n+1-i)(N+2)+2\big)\varepsilon_i-\mathop{\sum}\limits_{j=1}^n(j-1)(N+2)\delta_j-(nN+4n+2)\delta_{n+1}\in \Lambda_{N+1}^+$. Then $\tilde{\lambda}+\alpha_i\in\Lambda_N^+$ for all $i\in\mathbb{I}$. There exists a positive integer $\kappa$ such that it is bigger than  $\pm(\mu,\varepsilon_j)$ and $\pm(\mu,\delta_k)$
for any $\mu\in\mathrm{wt}\big(L^{\omega}_q(\tilde{\lambda}+\alpha_i)\big)$ with $i\in\mathbb{I},~j,k=1,2,\cdots,n+1$. Let $a=8\kappa$ and $\lambda=\mathop{\sum}\limits_{i=1}^{n+1}(n+\frac{5}{2}-i)a\varepsilon_i-\mathop{\sum}\limits_{j=1}^nja\delta_j-\frac{3(n+1)}{2}a\delta_{n+1}\in \Lambda$. Then $\lambda\in\Lambda_N^+$ and $\lambda+\mu$ are typical weights for all $\mu\in\mathrm{wt}\big(L^{\omega}_q(\tilde{\lambda}+\alpha_i)\big)$ with $i\in\mathbb{I}$. So $L_q(\lambda)\otimes L^{\omega}_q(\tilde{\lambda}+\alpha_i)$ are completely reducible for all $i\in\mathbb{I}$. Since $\{\tilde{\lambda}+\alpha_i|i\in\mathbb{I}\}$ could linearly span $\mathfrak{h}^*$, the claim holds.
\end{proof}
\begin{corollary}\label{centerU_0}
The Center $\mathcal{Z}(\mathrm{U}_q(\mathfrak{g}))$ is contained in $\mathrm{U}_0$.
\end{corollary}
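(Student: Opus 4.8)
The plan is to combine the standard torus argument with Theorem~\ref{annihilate A(n,n)}. I would let $z\in\mathcal{Z}(\mathrm{U}_q(\mathfrak{g}))$ and write $z=\mathop{\sum}\limits_{\nu}z_{\nu}$ for its decomposition into $Q$-homogeneous components. Since $z$ commutes with every $\bbk_i$, we have $\bbk_{\mu}z\bbk_{\mu}^{-1}=z$ for all $\mu\in Q$, and comparing $Q$-degrees gives $q^{(\mu,\nu)}z_{\nu}=z_{\nu}$ for every such $\mu$; as $q$ is an indeterminate this forces $(\mu,\nu)=0$ for all $\mu\in Q$ whenever $z_{\nu}\neq 0$, i.e.\ $\nu$ lies in the radical of the bilinear form restricted to $Q$. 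If $\mathfrak{g}\neq A(n,n)$ then $Q=\mathbb{Z}\Phi$ and this form is non-degenerate (being the restriction to the full-rank lattice $\mathbb{Z}\Phi$ of the non-degenerate form on $\mathfrak{h}^{*}=\mathbb{C}\Phi$), so $\nu=0$ and $z\in\mathrm{U}_0$; the corollary holds in this case. It remains to treat $\mathfrak{g}=A(n,n)$, where $Q=\mathbb{Z}\tilde{\Phi}$, the radical of the form is exactly $\mathbb{Z}\gamma$, and the above only yields $z\in\mathop{\bigoplus}\limits_{k\in\mathbb{Z}}\mathrm{U}_{k\gamma}=\mathrm{U}|_0$; one must still show $z_{k\gamma}=0$ for $k\neq 0$.

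Next I would observe that each $z_{k\gamma}$ is itself central. Comparing $\mathbb{Z}\tilde{\Phi}$-degrees in the relations $z\bbe_i=\bbe_iz$ and $z\bbf_i=\bbf_iz$ (the degrees $k\gamma\pm\alpha_i$ being pairwise distinct, since $\gamma\neq 0$ in $\mathbb{Z}\tilde{\Phi}$) gives $z_{k\gamma}\bbe_i=\bbe_iz_{k\gamma}$ and $z_{k\gamma}\bbf_i=\bbf_iz_{k\gamma}$ for every $k$, while $\bbk_{\mu}z_{k\gamma}\bbk_{\mu}^{-1}=q^{(\mu,k\gamma)}z_{k\gamma}=z_{k\gamma}$ because $\gamma$ is in the radical; hence $z_{k\gamma}$ commutes with all generators and lies in $\mathcal{Z}(\mathrm{U}_q(\mathfrak{g}))$. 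It therefore suffices to prove that any central element of $\mathbb{Z}\tilde{\Phi}$-degree $k\gamma$ with $k\neq 0$ vanishes, and since the automorphism $\omega$ negates the $\mathbb{Z}\tilde{\Phi}$-grading and preserves the center, we may assume $k>0$.

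The crux is to show that such a $z$ annihilates every typical finite-dimensional irreducible module $L_q(\lambda)$; the corollary then follows at once from Theorem~\ref{annihilate A(n,n)}. Because $\gamma=\mathop{\sum}\limits_{i}d_iJ_i\alpha_i$ with all $d_iJ_i\in\mathbb{Z}_+$ and $\gamma\neq 0$ in $\mathbb{Z}\tilde{\Phi}$, the triangular decomposition gives $\mathrm{U}_{k\gamma}=\mathop{\bigoplus}\limits_{\mu\in\mathbb{Z}_+\Pi}\mathrm{U}^-_{-\mu}\mathrm{U}^0\mathrm{U}^+_{\mu+k\gamma}$, and $\mu+k\gamma>0$ for every such $\mu$; so in any PBW presentation of $z$ each $\mathrm{U}^+$-factor has strictly positive degree, hence is a sum of monomials in the $\bbe_i$ of length $\geqslant 1$, and therefore kills the highest weight vector $v_{\lambda}$. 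Thus $zv_{\lambda}=0$, and since $z$ is central and $L_q(\lambda)=\mathrm{U}_q(\mathfrak{g})v_{\lambda}=\mathrm{U}^-v_{\lambda}$ (as $\mathrm{U}^+v_{\lambda}=\mathrm{U}^0v_{\lambda}=kv_{\lambda}$), we get $z(uv_{\lambda})=u(zv_{\lambda})=0$ for all $u\in\mathrm{U}^-$, i.e.\ $z$ annihilates $L_q(\lambda)$, as desired.

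I expect the third step to be the real content, and I want to stress why Theorem~\ref{annihilate A(n,n)}, rather than the a priori stronger-looking Theorem~\ref{annihilate}, is the tool one needs here: finite-dimensional $\mathrm{U}_q(\mathfrak{g})$-modules are not completely reducible in general, so the fact that $z$ kills every irreducible composition factor does not by itself imply that $z$ kills the module, and $z$ need not kill a tensor product $L_q(\lambda)\otimes L^{\omega}_q(\lambda')$ even though it kills each factor, since the center acts through the coproduct. The passage to \emph{typical} irreducibles in Theorem~\ref{annihilate A(n,n)}, which are projective, is precisely what circumvents this, and the only input one has to supply by hand is the vanishing $zv_{\lambda}=0$ on highest weight vectors established above by degree bookkeeping.
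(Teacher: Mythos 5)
Your proof is correct and follows essentially the same route as the paper: the torus argument handles $\mathfrak{g}\neq A(n,n)$, and for $A(n,n)$ a homogeneous central element of degree $k\gamma$, $k\neq0$, is shown to annihilate every (typical) finite-dimensional irreducible module and hence to vanish by Theorem~\ref{annihilate A(n,n)}. The only cosmetic differences are that you reduce to $k>0$ via the automorphism $\omega$ where the paper instead uses the lowest weight vector for $k<0$, and you justify $zv_{\lambda}=0$ by PBW degree bookkeeping where the paper argues directly that $zv_{\lambda}\in M_{\lambda+k\gamma}=0$.
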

\begin{proof}
If $\mathfrak{g}\neq A(n,n)$, note that $\mathcal{Z}(\mathrm{U}_q(\mathfrak{g}))$ is $\mathbb{Z}\Phi$-graded since $\mathrm{U}_q(\mathfrak{g})$ is $\mathbb{Z}\Phi$-graded. 
Assuming that  $\mathcal{Z}(\mathrm{U}_q(\mathfrak{g})) \cap \mathrm{U}_q(\mathfrak{g})_\nu\ne 0$ for some $\nu\in\mathbb{Z}\Phi$, we will show that $\nu=0$. Pick $0\ne z\in\mathcal{Z}(\mathrm{U}_q(\mathfrak{g})) \cap \mathrm{U}_q(\mathfrak{g})_\nu$. Then $z=\bbk_iz\bbk_i^{-1}=q^{(\nu,\alpha_i)}z$ for all $i\in\bbi$; hence $(\nu,\alpha_i)=0$ for all $i\in\bbi$, and $\nu=0$ since $(\text{-},\text{-})$ is non-degenerate.

For $\mathfrak{g}=A(n,n)$, the quantum superalgebra $\mathrm{U}_q(\mathfrak{g})$ is $\mathbb{Z}\tilde{\Phi}$-graded.  Similar to the argument above, if $\mathcal{Z}(\mathrm{U}_q(\mathfrak{g})) \cap \mathrm{U}_q(\mathfrak{g})_{\nu}\neq0$ with $\nu\in\mathbb{Z}\tilde{\Phi}$, then $\nu$ is contained in the radical of $(\text{-},\text{-})$. Thus, $\nu=k\gamma$ for some $k\in\mathbb{Z}$. We need to prove $k=0$. Otherwise assume $k\neq0$. Let $M$ be an arbitrary finite-dimensional irreducible module with the highest weight $\lambda$ and highest weight vector $v_{\lambda}$ and lowest weight $\lambda'$ and lowest weight vector $v_{\lambda'}$. Then $zv_{\lambda}\in M_{\lambda+k\gamma}=0$ if $k>0$ since $k\gamma>0$. Furthermore, $zv_{\lambda'}\in M_{\lambda'+k\gamma}=0$ if $k<0$ since $k\gamma<0$. Thus $zM=0$ and hence $z=0$ by Theorem \ref{annihilate A(n,n)}, which contradicts the choice of $z$.
\end{proof}

\section{Drinfeld double and ad-invariant bilinear form}\label{Drinfelddouble}
\subsection{The Drinfeld double}
In order to establish the Harish-Chandra homomorphism for quantum superalgebras,  we need to construct the quantum Killing form or Rosso form for quantum superalgebras.  Our approach to obtaining this takes advantage of the  Drinfeld double for $\mathbb{Z}_2$-graded Hopf algebras \cite{GZB}.  

\begin{definition}
A bilinear mapping $(\:,\:)\colon\mathcal{B}\times \mathcal{A}\mapsto k$ is called a \textit{skew-pairing} of the $\mathbb{Z}_2$-graded Hopf algebras $\mathcal{A}$ and $\mathcal{B}$ over $k$ if for all $a, a'\in\mathcal{A}$ and $b, b'\in\mathcal{B}$ we have
\begin{align}\label{skew-pairing}
(b,1)=\varepsilon(b)&,\quad  (1,a)=\varepsilon(a),\nonumber\\
(bb',a)=(-1)^{|b'||a_{(1)}|}\sum(b,a_{(1)})(b',a_{(2)})&,\quad (b,aa')=\sum(b_{(1)},a')(b_{(2)},a).
\end{align}
\end{definition}

\begin{proposition}
(\cite[Proposition 4]{GZB}) Let $\mathcal{A}$ and $\mathcal{B}$ be $\mathbb{Z}_2$-graded Hopf algebras equipped with a skew-pairing $(\:,\:)\colon\mathcal{B}\times \mathcal{A}\mapsto k$.  Then the vector space $\mathcal{A}\otimes\mathcal{B}$ becomes a superalgebra with multiplication defined by
\begin{equation}
(a\otimes b)(a'\otimes b')=\sum (-1)^{(|a'_{(1)}|+|a'_{(2)}|)(|b_{(2)}|+|b_{(3)}|)}(S(b_{(1)}),a'_{(1)}) (b_{(3)},a'_{(3)})aa'_{(2)}\otimes b_{(2)}b',
\end{equation}
for $a, a'\in\mathcal{A}$ and $b, b'\in\mathcal{B}$.  With the tensor product co-algebra and antipode $S(a\otimes b)=(-1)^{|a||b|}(1\otimes S(b))(S(a)\otimes 1)$ structure of $\mathcal{A}\otimes \mathcal{B}$, this superalgebra is also a $\mathbb{Z}_2$-graded Hopf algebra, called the Drinfeld double of $\mathcal{A}$ and $\mathcal{B}$ and denoted it by $\mathcal{D}(\mathcal{A},\mathcal{B})$. 
\end{proposition}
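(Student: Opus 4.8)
The plan is to verify, in turn, the three assertions packaged into the Proposition — associativity and unitality of the stated multiplication on $\mathcal{A}\otimes\mathcal{B}$, compatibility with the tensor-product coalgebra, and the antipode identity — organised around the factorisation $\mathcal{A}\otimes\mathcal{B}=(\mathcal{A}\otimes 1)(1\otimes\mathcal{B})$ rather than a single head-on expansion. Note first that the proposed product restricts to the given multiplications on $\mathcal{A}\otimes 1$ and on $1\otimes\mathcal{B}$, has $1\otimes 1$ as a two-sided unit, and is completely recovered from these data together with the single \emph{straightening relation}
\[
(1\otimes b)(a\otimes 1)=\sum(-1)^{(|a_{(1)}|+|a_{(2)}|)(|b_{(2)}|+|b_{(3)}|)}(S(b_{(1)}),a_{(1)})(b_{(3)},a_{(3)})\,a_{(2)}\otimes b_{(2)},
\]
obtained from the formula in the Proposition by putting $a=1$ and $b'=1$. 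Associativity of the full product is then equivalent, by the standard factorisation argument, to: (1)--(2) $\mathcal{A}\otimes 1$ and $1\otimes\mathcal{B}$ are subalgebras — immediate, using counitality to collapse the pairings; and (3) the two compatibility identities
\[
((1\otimes b)(a\otimes 1))(a'\otimes 1)=(1\otimes b)((a\otimes 1)(a'\otimes 1)),\qquad
((1\otimes b)(1\otimes b'))(a\otimes 1)=(1\otimes b)((1\otimes b')(a\otimes 1)),
\]
which say that the straightening relation ``passes through'' products in $\mathcal{A}$, resp.\ in $\mathcal{B}$. The axioms \eqref{skew-pairing} of the skew-pairing, together with co-associativity and the Hopf axioms of $\mathcal{A}$ and $\mathcal{B}$, are precisely what forces (3).

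The only genuinely new ingredient over the classical Drinfeld double is the Koszul sign rule, and the plan is to disarm it at once. The skew-pairing is even (a degree-$\bar 0$ map, as is implicit in the graded setting: $(b,a)=0$ unless $|b|=|a|$), and $S$, $\Delta$, $\varepsilon$ are parity-preserving; hence the straightening map $R\colon\mathcal{B}\otimes\mathcal{A}\to\mathcal{A}\otimes\mathcal{B}$, $b\otimes a\mapsto(1\otimes b)(a\otimes 1)$, is even. Once $R$ is even, each sign occurring in the verification of (1)--(3), of the bialgebra axiom, and of the antipode identity is the unique sign dictated by transposing homogeneous tensor legs, so every computation reduces to its ungraded counterpart with Koszul signs inserted in the forced way. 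Concretely I would first isolate a ``graded double cross product'' lemma: given $\mathbb{Z}_2$-graded algebras $A$, $B$ and an even map $R\colon B\otimes A\to A\otimes B$ with $R(1\otimes a)=a\otimes 1$, $R(b\otimes 1)=1\otimes b$ and satisfying the two hexagon-type compatibility conditions of a distributive law, the space $A\otimes B$ with the multiplication assembled from $R$, $m_A$, $m_B$ is an associative unital graded algebra; prove it by the usual diagram chase carrying Koszul signs, then merely check that the $R$ built from the skew-pairing satisfies the hypotheses — this last step is where \eqref{skew-pairing} enters.

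For the coalgebra compatibility — that the tensor-product comultiplication on $\mathcal{A}\otimes\mathcal{B}$ (with its sign $(-1)^{|b_{(1)}||a_{(2)}|}$) is an algebra map for the new product — the plan is the same reduction: multiplicativity on $\mathcal{A}\otimes 1$ and on $1\otimes\mathcal{B}$ is the statement that $\Delta_\mathcal{A}$, $\Delta_\mathcal{B}$ are algebra maps, and multiplicativity across the straightening relation follows from co-associativity together with the ``transposed'' pairing axiom $(b,aa')=\sum(b_{(1)},a')(b_{(2)},a)$; the counit condition is immediate from $(b,1)=\varepsilon(b)$ and $(1,a)=\varepsilon(a)$. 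For the antipode, the candidate $S(a\otimes b)=(-1)^{|a||b|}(1\otimes S(b))(S(a)\otimes 1)$ is forced by the anti-automorphism rule $S(xy)=(-1)^{|x||y|}S(y)S(x)$ applied to $x=a\otimes 1$, $y=1\otimes b$; one then checks it is a two-sided convolution inverse of the identity, which by multiplicativity of convolution and the factorisation of $\mathrm{id}_{\mathcal{A}\otimes\mathcal{B}}$ reduces to the antipode axioms of $\mathcal{A}$ and of $\mathcal{B}$ separately.

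The main obstacle is bookkeeping rather than conception: the straightening relation involves the twofold coproducts $\Delta^{(2)}(b)$ and $\Delta^{(2)}(a)$, so a direct check of (3) unfolds into sums over many Sweedler components with nested pairings and nested Koszul signs, where one stray sign or misapplied antipode destroys the cancellation. Extracting and proving the abstract graded double-cross-product lemma once and for all is what keeps this under control. Since the result is exactly the super-analogue of \cite[Proposition 4]{GZB}, one may alternatively just cite that source; the sketch above indicates how a self-contained proof would run.
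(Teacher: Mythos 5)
The paper does not actually prove this Proposition: it is quoted from \cite[Proposition 4]{GZB}, so there is no in-paper argument to measure yours against. Your sketch is the standard self-contained route --- factor $\mathcal{A}\otimes\mathcal{B}$ as $(\mathcal{A}\otimes 1)(1\otimes\mathcal{B})$, observe that the product is determined by the two subalgebra structures together with the straightening relation, and package associativity as a graded distributive-law (double cross product) lemma --- and it is sound; it is essentially the argument of \cite{GZB} and of the classical ungraded treatment, with Koszul signs inserted. Two caveats are worth recording. First, your claim that every sign is ``forced'' rests on the pairing being even, i.e.\ $(b,a)=0$ unless $|b|=|a|$; the paper's definition of a skew-pairing does not state this explicitly, so you should either add it as a standing hypothesis or note that it is required for consistency of the axiom $(bb',a)=(-1)^{|b'||a_{(1)}|}\sum(b,a_{(1)})(b',a_{(2)})$, and that it holds for the pairing actually used here, where $\bbf_i$ pairs only with $\bbe_i$ of equal parity. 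Second, the two hexagon identities in your step (3), together with multiplicativity of the tensor-product comultiplication across the straightening relation, constitute the entire computational content of the Proposition; your plan correctly identifies the inputs (coassociativity, the antipode axiom, the two pairing axioms, and the fact that $S$ is a coalgebra anti-morphism) but asserts rather than performs these verifications, so to be a complete proof at least these Sweedler computations must be written out --- or one simply cites \cite{GZB}, as the paper does.
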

The existence of a dual pairing of $\mathrm{U}^{\geqslant0}$ and $(\mathrm{U}^{\leqslant0})^{\mathrm{op}}$ was observed by Drinfeld \cite{Drinfeld}. In our exposition, we followed Tanisaki \cite[Proposition 2.1.1]{Tanisaki1} for quantum groups and  Lehrer, Zhang, Zhang \cite[Section 3]{LZZ} for quantum superalgebra $\mathrm{U}_q(\mathfrak{gl}_{m|n})$. We have the following proposition.

\begin{proposition}
There is a unique non-degenerate skew-pairing between the $\mathbb{Z}_2$-graded Hopf algebras $\mathrm{U}^{\geqslant0}$ and $\mathrm{U}^{\leqslant0}$ with
\begin{align}\label{double}
(\bbk_i,\bbk_j)=q^{-(\alpha_i,\alpha_j)},\quad(\bbf_i,\bbe_j)=-\delta_{ij}\frac{1}{q_i-q_i^{-1}} \text{ and }
(\bbk_i,\bbe_j)=0,\quad      (\bbf_i,\bbk_j)=0.
\end{align} 
\end{proposition}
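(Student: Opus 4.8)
The plan is to construct the skew-pairing $(\,,\,)\colon\mathrm{U}^{\leqslant0}\times\mathrm{U}^{\geqslant0}\to k$ by first defining it on generators via \eqref{double} and then extending it in a way forced by the skew-pairing axioms \eqref{skew-pairing}, and finally to verify non-degeneracy. The starting point is the observation that $\mathrm{U}^{\geqslant0}\cong\mathrm{U}^+\otimes\mathrm{U}^0$ and $\mathrm{U}^{\leqslant0}\cong\mathrm{U}^-\otimes\mathrm{U}^0$ as superspaces, with $\mathrm{U}^{\pm}$ being $Q$-graded. The axioms \eqref{skew-pairing} force $(\,,\,)$ to vanish between components of non-complementary degree, so it suffices to define it on each $\mathrm{U}^-_{-\nu}\mathrm{U}^0\times\mathrm{U}^0\mathrm{U}^+_{\nu}$; moreover the second and fourth axioms in \eqref{double} (plus the triangularity of the coproducts $\Delta(\bbe_i)=\bbk_i\otimes\bbe_i+\bbe_i\otimes1$, $\Delta(\bbf_i)=1\otimes\bbf_i+\bbf_i\otimes\bbk_i^{-1}$) reduce everything to: the restriction to $\mathrm{U}^0\times\mathrm{U}^0$ is determined by $(\bbk_i,\bbk_j)=q^{-(\alpha_i,\alpha_j)}$, i.e. $(\bbk_\mu,\bbk_\nu)=q^{-(\mu,\nu)}$; the restriction to $\mathrm{U}^-\times\mathrm{U}^+$ is determined recursively from $(\bbf_i,\bbe_j)=-\delta_{ij}/(q_i-q_i^{-1})$; and the pairing of a general element $\bbf\bbk_\mu$ with $\bbk_\nu\bbe$ factors as a product of an $\mathrm{U}^0$-part and an $\mathrm{U}^{\pm}$-part, up to a power of $q$.

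The construction itself proceeds as follows. First I would define the pairing on $\mathrm{U}^0\times\mathrm{U}^0$ by $(\bbk_\mu,\bbk_\nu)=q^{-(\mu,\nu)}$; this is well-defined and the axioms on $\mathrm{U}^0$ are immediate from bilinearity of $(\text{-},\text{-})$ on $\mathbb{Z}\Phi$. Next, following Tanisaki \cite[Proposition 2.1.1]{Tanisaki1} and Lehrer--Zhang--Zhang \cite[Section 3]{LZZ}, I would build the pairing $\mathrm{U}^-\times\mathrm{U}^+$ degree by degree on the $Q$-grading: one realizes $\mathrm{U}^+$ (resp. $\mathrm{U}^-$) via a Hopf-pairing-type bilinear form where $\bbe_i$ acts on $\mathrm{U}^-$ by a skew-derivation, and one checks the quantum Serre and higher-order Serre relations lie in the radical of the would-be pairing. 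Here the skew-primitive descriptions recorded in Remark \ref{skew-primitive}, namely $\Delta(u^\pm_{ij})$, $\Delta(u^\pm_B)$, $\Delta(u^\pm)$, are exactly what is needed: because these Serre elements are skew-primitive, the induced functionals $(\,-\,,u^+_{ij})$ etc. are forced to vanish against all of $\mathrm{U}^-$ by an easy degree/coproduct argument, so the pairing descends to the quotient by the Serre ideal. The sign bookkeeping — the factors $(-1)^{|b'||a_{(1)}|}$ in \eqref{skew-pairing} and the graded tensor product sign $(-1)^{|a_2||b_1|}$ — must be tracked carefully when $\bbe_s,\bbf_s$ are involved, using $(\bbe_s)^2=(\bbf_s)^2=0$ and that all other generators are even. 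Then one assembles the general pairing: for $\bbf_J\bbk_\mu\in\mathrm{U}^-_{-\nu}\mathrm{U}^0$ and $\bbk_{\mu'}\bbe_I\in\mathrm{U}^0\mathrm{U}^+_{\nu}$ one sets, up to the appropriate $q$-power coming from commuting $\bbk$'s past $\bbe$'s and $\bbf$'s, $(\bbf_J\bbk_\mu,\bbk_{\mu'}\bbe_I)=q^{-(\mu,\mu')}\cdot(\text{something})\cdot(\bbf_J,\bbe_I)$, and checks the axioms \eqref{skew-pairing} by reducing to generators.

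For non-degeneracy, I would argue on each graded piece separately. On $\mathrm{U}^0$ non-degeneracy is clear since $(\text{-},\text{-})$ on $\mathbb{Z}\Phi$ is non-degenerate and $\{\bbk_\mu\}$ is a basis; on $\mathrm{U}^-_{-\nu}\times\mathrm{U}^+_\nu$ one uses the standard fact (as for quantum groups, cf.\ \cite[Chapter 6]{Jan}) that the restricted form is non-degenerate — this follows because the radical is a two-sided ideal and a coideal, hence would force extra relations contradicting the known PBW-type basis of $\mathrm{U}^\pm$, or alternatively by exhibiting the form as a perfect pairing on a Verma module level. The uniqueness claim is the easy part: any skew-pairing with the prescribed values on generators is completely determined by the axioms \eqref{skew-pairing}, by induction on the $Q$-degree, so two such agree everywhere. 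I expect the main obstacle to be the verification that the Serre-type elements (including the three families of higher-order quantum Serre relations, \eqref{serreE2}--\eqref{serreE4}, and the relation $\prod_i\bbk_i^{d_iJ_i}=1$ in the $A\!\left(\tfrac{r-1}{2},\tfrac{r-1}{2}\right)$ case) lie in the radical — i.e.\ that the pairing is genuinely well-defined on $\mathrm{U}^{\leqslant0}$ and $\mathrm{U}^{\geqslant0}$ rather than on the free algebras — since this requires the skew-primitivity computations of Remark \ref{skew-primitive} together with nontrivial sign and $q$-coefficient manipulations; the remaining axiom checks and the non-degeneracy are then comparatively routine adaptations of the quantum-group arguments.
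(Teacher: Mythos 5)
Your proposal matches the paper's proof in both substance and level of detail: the paper likewise establishes well-definedness by observing (Remark \ref{skew-primitive}, or citing Geer) that the quantum Serre and higher-order Serre elements are skew-primitive and hence lie in the radical, and it reduces non-degeneracy to the statement that a nonzero $u\in\mathrm{U}^-_{-\mu}$ with $\mu>0$ cannot satisfy $[\bbe_i,u]=0$ for all $i$ (equivalently, cannot be killed by all $r_i,r_i'$), which is proved by the same Verma-/simple-module argument you gesture at (the argument of Lemma \ref{injective} combined with Proposition \ref{3.3}). There is no gap relative to the paper's own treatment.
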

\begin{proof}
The well-defineness follows from \cite{Ge1} or Remark \ref{skew-primitive}, and the non-degeneracy of skew-pairing can be obtained from the following: for $\mu\in\mathbb{Z}\Phi$ with $\mu>0$ and $u\in\mathrm{U}^-_{-\mu}$ with $[\bbe_i,  u]=0$ for all $i\in\bbi$,  then $u=0$.  Similarly, if $u\in\mathrm{U}^+_{\mu}$ with $[\bbf_i, u]=0$ for all $i\in\bbi$,  then $u=0$. The fact can be proven in a similar way to Lemma \ref{injective}, which we omit here.
\end{proof}
\begin{remark}
Geer \cite{Ge1} extend Lusztig's \cite{Lu} results to the  Etingof-Kazhdan quantization of Lie superalgebras $\mathrm{U}_h^{DJ}(\mathfrak{g})$ and check directly that the extra quantum Serre-type relations are in the radical of the bilinear form.  Indeed,  the radical of the bilinear form is generated by the extra quantum Serre-type relations and higher order Serre relations.
\end{remark}

\begin{corollary}
As a superalgebra, 
$\mathcal{D}(\mathrm{U}^{\geqslant0},\mathrm{U}^{\leqslant0})$ is generated by elements $\bbe_i,\bbk_i,\bbk_i^{-1}, \bbf_i,  \bbk^{\prime}_i$, $\bbk_i^{\prime-1}$. The defining relations are the relations for the generators $\bbe_i,\bbk_i, \bbk_i^{-1},$  (resp. ,  $\bbf_i, \bbk^{\prime}_i, \bbk_i^{\prime-1}$) of the superalgebra $\mathrm{U}^{\geqslant0}$ (resp. $\mathrm{U}^{\leqslant0}$), and the following cross relations:\begin{align}
\bbk^{\prime}_i\bbe_j\bbk_i^{\prime-1}=q^{(\alpha_i,\alpha_j)}\bbe_j,&\quad \bbk_i\bbf_j\bbk_i^{-1}=q^{-(\alpha_i,\alpha_j)}\bbf_j,\\
\bbk_i\bbk^{\prime}_j=\bbk^{\prime}_j\bbk_i,\quad \bbe_i\bbf_j&-(-1)^{|\bbe_i||\bbf_j|}\bbf_j\bbe_i=\delta_{ij}\frac{\bbk_i-\bbk_i^{\prime-1}}{q_i-q_i^{-1}}.
\end{align}
\end{corollary}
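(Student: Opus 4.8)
The plan is to establish this as the standard presentation of a Drinfeld double, carried over to the $\mathbb{Z}_2$-graded setting. First I would record that, by construction, the maps $a\mapsto a\otimes 1$ and $b\mapsto 1\otimes b$ embed $\mathrm{U}^{\geqslant 0}$ and $\mathrm{U}^{\leqslant 0}$ as sub-Hopf-superalgebras of $\mathcal{D}(\mathrm{U}^{\geqslant 0},\mathrm{U}^{\leqslant 0})$; write $\bbe_i,\bbk_i^{\pm 1}$ for the images of the generators of $\mathrm{U}^{\geqslant 0}$ and $\bbf_i,\bbk_i'^{\pm 1}$ for those of $\mathrm{U}^{\leqslant 0}$, where $\bbk_i'$ is the copy of $\bbk_i$ sitting in the second tensor factor. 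The Drinfeld-double multiplication formula expresses an arbitrary $a\otimes b$ as a $k$-linear combination of products $(a'\otimes 1)(1\otimes b')$, so these six families generate $\mathcal{D}(\mathrm{U}^{\geqslant 0},\mathrm{U}^{\leqslant 0})$ as a superalgebra, and all relations internal to $\mathrm{U}^{\geqslant 0}$ (resp.\ $\mathrm{U}^{\leqslant 0}$) automatically hold among the $\bbe_i,\bbk_i^{\pm 1}$ (resp.\ $\bbf_i,\bbk_i'^{\pm 1}$).

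Second I would verify the four cross relations by feeding the coproducts $\Delta(\bbe_i)=\bbk_i\otimes\bbe_i+\bbe_i\otimes 1$, $\Delta(\bbf_j)=1\otimes\bbf_j+\bbf_j\otimes\bbk_j^{-1}$, $\Delta(\bbk_i^{\pm1})=\bbk_i^{\pm1}\otimes\bbk_i^{\pm1}$ and their iterates into the multiplication formula, and evaluating the pairings that arise with the explicit values (\ref{double}) together with the consequences of the skew-pairing axioms (\ref{skew-pairing}) (for instance $(\bbk_i^{-1},\bbk_j)=q^{(\alpha_i,\alpha_j)}$, $(S(\bbf_i),\bbe_j)=\delta_{ij}/(q_i-q_i^{-1})$, and the vanishing of every pairing of a $\bbk$ against an $\bbe$ or an $\bbf$). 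Comparing $(1\otimes\bbk_i')(\bbe_j\otimes 1)$ with $(\bbe_j\otimes 1)(1\otimes\bbk_i')$ yields $\bbk_i'\bbe_j\bbk_i'^{-1}=q^{(\alpha_i,\alpha_j)}\bbe_j$; comparing $(\bbk_i\otimes 1)(1\otimes\bbf_j)$ with $(1\otimes\bbf_j)(\bbk_i\otimes 1)$ yields $\bbk_i\bbf_j\bbk_i^{-1}=q^{-(\alpha_i,\alpha_j)}\bbf_j$; the commuting of $\bbk_i$ with $\bbk_j'$ is immediate from the grouplike coproducts. The last relation $\bbe_i\bbf_j-(-1)^{|\bbe_i||\bbf_j|}\bbf_j\bbe_i=\delta_{ij}(\bbk_i-\bbk_i'^{-1})/(q_i-q_i^{-1})$ comes from expanding $(\bbe_i\otimes 1)(1\otimes\bbf_j)$ and $(1\otimes\bbf_j)(\bbe_i\otimes 1)$ and subtracting. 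This last computation is the main obstacle: one must simultaneously keep track of the parity prefactors in the Drinfeld-double multiplication formula, the signs produced by the three-fold coproducts of $\bbe_i$ and $\bbf_j$, and the sign in $S(\bbf_j)=-\bbf_j\bbk_j$, and it is precisely here that the $\mathbb{Z}_2$-grading makes the bookkeeping delicate.

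Finally I would check that these relations are defining. Let $\widetilde{\mathcal{D}}$ be the superalgebra generated by the six families subject to the internal relations of $\mathrm{U}^{\geqslant 0}$ and $\mathrm{U}^{\leqslant 0}$ together with the four cross relations; the first two steps give a surjective superalgebra homomorphism $\widetilde{\mathcal{D}}\twoheadrightarrow\mathcal{D}(\mathrm{U}^{\geqslant 0},\mathrm{U}^{\leqslant 0})$. The first three cross relations are mere rescalings, so they allow one to move the toral generators $\bbk_i,\bbk_i'$ freely past the $\bbe$'s and $\bbf$'s, while the super-commutator relation lets one move each $\bbf_j$ to the right of each $\bbe_i$ at the cost of terms of strictly smaller $Q$-degree. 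An induction on $Q$-degree then shows that $\widetilde{\mathcal{D}}$ is spanned by the monomials of the form (product of $\bbe$'s)(product of toral generators)(product of $\bbf$'s), that is, by the image of $\mathrm{U}^+\otimes(\mathrm{U}^0\mathrm{U}'^0)\otimes\mathrm{U}^-$. Invoking the triangular decompositions $\mathrm{U}^+\otimes\mathrm{U}^0\cong\mathrm{U}^{\geqslant 0}$ and $\mathrm{U}^-\otimes\mathrm{U}^0\cong\mathrm{U}^{\leqslant 0}$ recorded earlier, this forces $\dim_k\widetilde{\mathcal{D}}\le\dim_k(\mathrm{U}^{\geqslant 0}\otimes\mathrm{U}^{\leqslant 0})=\dim_k\mathcal{D}(\mathrm{U}^{\geqslant 0},\mathrm{U}^{\leqslant 0})$ in each $Q$-graded component, so the surjection above must be an isomorphism.
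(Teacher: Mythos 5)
Your outline is correct, and it is the standard presentation argument for a Drinfeld double; the paper itself states this corollary without proof, treating it as an immediate consequence of the double construction of Gould--Zhang--Bracken together with the explicit skew-pairing values (\ref{double}), so your write-up supplies exactly the argument the paper implicitly invokes. Two remarks. First, you defer the one genuinely delicate point --- the expansion of $(1\otimes\bbf_j)(\bbe_i\otimes 1)$ via the double's multiplication formula, keeping track of the parity prefactor, the three-fold coproducts, and the sign in $S(\bbf_j)=-\bbf_j\bbk_j$ --- to a verification you do not actually perform; since this is precisely where the relation $\bbe_i\bbf_j-(-1)^{|\bbe_i||\bbf_j|}\bbf_j\bbe_i=\delta_{ij}(\bbk_i-\bbk_i^{\prime-1})/(q_i-q_i^{-1})$ is produced, a complete proof must carry it out (only three pairs of terms from $\Delta^{(2)}(\bbf_j)$ and $\Delta^{(2)}(\bbe_i)$ pair nontrivially, so the computation is short but sign-sensitive). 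Second, in your final step the $Q$-graded components of $\widetilde{\mathcal{D}}$ are still infinite-dimensional because of the toral generators, so a literal dimension count per graded component does not quite close the argument; the standard repair is to note that your normal-form monomials $E\,\bbk_{\mu}\bbk^{\prime}_{\nu}\,F$ map under the surjection onto the basis $\left\{x\bbk_{\mu}\otimes\bbk^{\prime}_{\nu}y\right\}$ of $\mathrm{U}^{\geqslant0}\otimes\mathrm{U}^{\leqslant0}$ (using the triangular decompositions you cite), so a spanning set of $\widetilde{\mathcal{D}}$ is sent bijectively to a basis of $\mathcal{D}(\mathrm{U}^{\geqslant0},\mathrm{U}^{\leqslant0})$ and the surjection is an isomorphism. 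Neither point alters the structure of your argument.
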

It is known \cite{Ge1, GZB} that the sub-superalgebras $\mathrm{U}^{\geqslant0}$ and $\mathrm{U}^{\leqslant0}$ of the quantum superalgebras $\mathrm{U}_q(\mathfrak{g})$ form a skew-pairing,  and $\mathrm{U}_q(\mathfrak{g})$ is a quotient of quantum double of $\mathcal{D}(\mathrm{U}^{\geqslant0},\mathrm{U}^{\leqslant0})$.  More precisely,  we set $\mathcal{I}$ to be the two-sided ideal generated by the elements $\bbk_i-\bbk_i^{\prime-1}$,  which is also a $\mathbb{Z}_2$-graded Hopf ideal, and we have canonical isomorphism $\mathcal{D}(\mathrm{U}^{\geqslant 0},\mathrm{U}^{\leqslant0})/\mathcal{I}\cong \mathrm{U}_q(\mathfrak{g})$ as $\mathbb{Z}_2$-graded Hopf algebras. 
 Recently,  Drinfeld doubles have been studied by various authors as a useful tool to recover the quantum groups (see, e.g., \cite{BGH,FX,GHZ,HPR,HRZ,HZ}). 

\subsection{Rosso form}
Now we can define an ad-invariant and non-degenerate bilinear form on quantum superalgebras by using skew-pairing between $\mathrm{U}^{\geqslant 0}$ and $\mathrm{U}^{\leqslant0}$.  
\begin{theorem}\label{adinvariant}
Define a bilinear form $\langle~,~\rangle\colon\mathrm{U}_q(\mathfrak{g})\times\mathrm{U}_q(\mathfrak{g})\rightarrow k$ with\begin{equation}\label{comp}
\langle (y\bbk_{\nu})\bbk_{\lambda}x ,(y'\bbk_{\nu'})\bbk_{\lambda'}x'\rangle=(-1)^{|y|}(y',x)(y,x')q^{(2\rho,\nu)q^{-(\lambda,\lambda')/2}},
\end{equation}
for $x\in\mathrm{U}^+_{\mu},x'\in\mathrm{U}^+_{\mu'}$,  $y\in\mathrm{U}^-_{-\nu},y'\in\mathrm{U^-_{-\nu'}}, \lambda,\lambda'\in \mathbb{Z}\Phi$ and $\mu,\mu',\nu,\nu'\in Q$. The bilinear form is ad-invariant, i.e., $\langle \mathrm{ad}(u)v,v'\rangle=(-1)^{|u||v|}\langle v,\mathrm{ad}(S(u))v'\rangle$.  
\end{theorem}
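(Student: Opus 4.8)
The plan is to follow the classical strategy (as in Tanisaki \cite{Tanisaki1} and \cite[Chapter 6]{Jan}) and exploit the Drinfeld-double description of $\mathrm{U}_q(\mathfrak{g})$ from the previous subsection. First I would observe that the form $\langle\,,\,\rangle$ is, up to the normalizing character $q^{(2\rho,\nu)}q^{-(\lambda,\lambda')/2}$, essentially the Rosso form obtained from the canonical pairing on $\mathcal{D}(\mathrm{U}^{\geqslant0},\mathrm{U}^{\leqslant0})$ after passing to the quotient by $\mathcal{I}$. Concretely, the double pairing $\langle a\otimes b, a'\otimes b'\rangle_{\mathcal{D}} := (b, a')\,\varepsilon(a)\varepsilon(b')$ (extended bilinearly) together with the antipode gives a pairing between $\mathrm{U}_q(\mathfrak{g})$ and itself via $\langle u, v\rangle = \langle u, S(v_{(1)})\rangle_{\mathcal{D}}\,\langle \cdot, v_{(2)}\rangle$; I would first check that this abstract recipe specializes to formula \eqref{comp} on PBW-type elements $(y\bbk_\nu)\bbk_\lambda x$, using the explicit coproducts $\Delta(\bbe_i),\Delta(\bbf_i),\Delta(\bbk_i^{\pm1})$ and the pairing values \eqref{double}. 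This reduces the theorem to a known formal property of Drinfeld doubles.

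The ad-invariance itself I would prove in two stages. Stage one: recall that for any quasi-triangular (or more precisely, any Drinfeld double) Hopf superalgebra, the canonical bilinear form satisfies the invariance identity under the adjoint action, because the pairing $\langle\,,\,\rangle_{\mathcal{D}}$ intertwines left multiplication in one factor with right multiplication in the other, and the adjoint action $\mathrm{ad}(u)v = \sum (-1)^{|u_{(2)}||v|} u_{(1)} v\, S(u_{(2)})$ is built from left and right multiplication. Stage two: since both sides of $\langle\mathrm{ad}(u)v,v'\rangle = (-1)^{|u||v|}\langle v,\mathrm{ad}(S(u))v'\rangle$ are linear in $u$ and the identity is multiplicative in $u$ in the appropriate sense (if it holds for $u_1$ and $u_2$ it holds for $u_1 u_2$, using that $\mathrm{ad}$ is an algebra map and $S$ is an anti-homomorphism), it suffices to verify it on the algebra generators $u = \bbe_i, \bbf_i, \bbk_i^{\pm1}$. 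For these, $\mathrm{ad}(\bbk_i) = $ conjugation by $\bbk_i$, $\mathrm{ad}(\bbe_i)v = \bbe_i v - (-1)^{|\bbe_i||v|}\bbk_i v \bbk_i^{-1}\bbe_i$, and $\mathrm{ad}(\bbf_i)v = (\bbf_i v - (-1)^{|\bbf_i||v|} v \bbf_i)\bbk_i$ (up to signs), so the identity becomes a finite collection of explicit equalities among pairings of PBW monomials, each checkable from \eqref{comp} and the cross relations in the corollary.

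The main obstacle I anticipate is bookkeeping of the sign factors $(-1)^{|\cdot||\cdot|}$ and the extra scalar $q^{(2\rho,\nu)}q^{-(\lambda,\lambda')/2}$: in the super setting the coproduct of a product of $\bbf$'s and $\bbk$'s introduces Koszul signs, and one must confirm that the $\rho$-shift is exactly the correction needed to make the form genuinely ad-invariant rather than twisted-invariant (this is the super analogue of why the Rosso form carries a $q^{(2\rho,-)}$ twist in the quantum-group case). I would handle this by first treating the purely Cartan part $u=\bbk_\mu$ — where ad-invariance reduces to the weight-homogeneity of $\langle\,,\,\rangle$ (the form pairs $\mathrm{U}_q(\mathfrak{g})_\eta$ with $\mathrm{U}_q(\mathfrak{g})_{-\eta}$, which follows from \eqref{comp} since $(y',x)=0$ unless $\nu'=\mu$ etc.) — and then doing the $\bbe_i$ and $\bbf_i$ cases, which by the $\omega$-symmetry of the setup are mirror images of each other, so only one needs a full computation. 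Non-degeneracy, if required here, follows from non-degeneracy of the skew-pairing \eqref{double} together with the triangular decomposition.
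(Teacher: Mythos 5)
Your proposal follows essentially the same route as the paper: the paper also reduces the identity to the generators $u=\bbk_i,\bbe_i,\bbf_i$ (the Cartan case being immediate from weight homogeneity) and then verifies $\langle \mathrm{ad}(\bbe_i)v,v'\rangle=(-1)^{|\bbe_i||v|}\langle v,\mathrm{ad}(S(\bbe_i))v'\rangle$ by an explicit case-by-case computation on PBW monomials, with the $\bbf_i$ case handled symmetrically. The one ingredient you leave implicit is the commutator lemma (equations (\ref{xfrelation})--(\ref{eyrelation})) expressing $[\bbe_i,y]$ and $[x,\bbf_i]$ through the coproduct components $r_i,r_i'$ together with the compatibilities $(y,\bbe_ix)=(\bbf_i,\bbe_i)(r_i(y),x)$, etc.; this is precisely what converts $\langle\mathrm{ad}(\bbe_i)v,v'\rangle$ into pairings evaluable by (\ref{comp}), so you should state it explicitly. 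Your ``stage one'' appeal to a general invariance property of the Drinfeld double pairing is dispensable and, as written, not rigorous (the displayed abstract recipe does not parse, and the $\rho$-twist means the form is not literally the canonical double pairing), but this is harmless since your stage two carries the proof on its own.
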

By the use of the duality pairing, Tanisaki \cite{Tanisaki1} describes the Killing form of the quantum algebra, which is first constructed by Rosso \cite{Rosso}, then uses it to investigate the center of quantum algebra. Similar techniques could be applied in the case when $\mathfrak{g}$ is a Lie superalgebra of type A-G. Perhaps the proof of this theorem is known by several specialists, but it seems difficult to find in the existing literature. It is fundamental to prove the surjectivity of Harish-Chandra throughout this paper, so we write down the details to make the paper more accessible. Here we need some tedious computations,  which are also essential for Section \ref{Centers}.  

For $x\in \mathrm{U}^+_{\mu}$  and $y\in \mathrm{U}^-_{-\mu}$, we know $\Delta(x)\in\mathop{\bigoplus}\limits_{0\leqslant\nu\leqslant\mu}\mathrm{U}^+_{\mu-\nu}\bbk_{\nu}\otimes \mathrm{U}^+_{\nu}$ and $\Delta(y)\in\mathop{\bigoplus}\limits_{0\leqslant\nu\leqslant\mu}\mathrm{U}^-_{-\nu}\otimes \mathrm{U}^-_{-(\mu-\nu)}\bbk_{\nu}^{-1}$,  thus for each $\alpha_i\in \Pi$, we can define elements $r_i(x),  r'_i(x)$  in $\mathrm{U}^+_{\mu-\alpha}$ and  $r_i(y),  r'_i(y)$ in $\mathrm{U}^-_{-(\mu-\alpha)}$ to satisfy the following equations:
\begin{eqnarray*}
\begin{array}{llll}
&\Delta(x)=x\otimes 1+\mathop{\sum}\limits_{i=1}^rr_i(x)\bbk_i\otimes \bbe_i+\cdots=\bbk_{\mu}\otimes x+\mathop{\sum}\limits_{i=1}^r\bbe_i\bbk_{\mu-\alpha_i}\otimes r'_i(x)+\cdots, \text{ and }\\
&\Delta(y)=y\otimes \bbk_{\mu}^{-1}+\mathop{\sum}\limits_{i=1}^rr_i(y)\otimes \bbf_i\bbk_{\mu-\alpha_i}^{-1}+\cdots
=1\otimes y+\mathop{\sum}\limits_{i=1}^r\bbf_i\otimes r'_i(y) \bbk_{\alpha_i}^{-1}+\cdots.
\end{array}
\end{eqnarray*}
Then for all $x\in\mathrm{U}^+_{\mu},x'\in\mathrm{U}^+_{\mu'}$ and $y\in\mathrm{U}^-$,  we have
\begin{eqnarray*}
\begin{array}{llll}
&r_i(xx')=xr_i(x')+(-1)^{|\bbe_i||x'|}q^{(\mu',\alpha_i)}r_i(x)x',\quad &r'_i(xx')=(-1)^{|x||\bbe_i|}q^{(\mu,\alpha_i)}xr_i'(x')+r_i'(x)x',\\
&(\bbf_iy,x)=(-1)^{|r'_i(x)||\bbe_i|}(\bbf_i,\bbe_i)(y,r_i'(x)),\quad &(y\bbf_i,x)=(-1)^{|\bbf_i||r_i(x)|}(\bbf_i,\bbe_i)(y,r_i(x)).
\end{array}
\end{eqnarray*}
Similarly,  for all $y\in\mathrm{U}^-_{-\mu},y'\in\mathrm{U}^-_{-\mu'}$ and $x\in\mathrm{U}^+$, we have 
\begin{eqnarray*}
\begin{array}{llll}
&r_i(yy')=q^{(\mu,\alpha_i)}yr_i(y')+(-1)^{|\bbf_i||y'|}r_i(y)y',\quad &r'_i(yy')=(-1)^{|y||\bbf_i|}yr_i'(y')+q^{(\mu',\alpha_i)}r_i'(y)y',\\
&(y,\bbe_ix)=(\bbf_i,\bbe_i)(r_i(y),x),\quad &(y,x\bbe_i)=(\bbf_i,\bbe_i)(r'_i(y),x).
\end{array}
\end{eqnarray*}
Thus, we have the following lemma.
\begin{lemma} 
For all $x\in\mathrm{U}^+_{\mu}$ and $y\in\mathrm{U}^-_{-\mu}$, we have
\begin{align}
&[x,\bbf_i]=x\bbf_i-(-1)^{|x||\bbf_i|}\bbf_ix=(q_i-q_i^{-1})^{-1}\big(r_i(x)\bbk_i-(-1)^{|r'_i(x)||\bbf_i|}\bbk_i^{-1}r_i'(x)\big),\label{xfrelation}\\
&[\bbe_i,y]=\bbe_iy-(-1)^{|y||\bbe_i|}y\bbe_i=(q_i-q_i^{-1})^{-1}\big((-1)^{|\bbe_i||r_i(y)|}\bbk_ir_i(y)-r_i'(y)\bbk_i^{-1}\big).\label{eyrelation}
\end{align}
\end{lemma}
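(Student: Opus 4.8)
The plan is to prove the first identity \eqref{xfrelation} by induction on the length of $x$ written as a monomial in the generators $\bbe_j$, and then to obtain \eqref{eyrelation} either by the symmetric induction on $\mathrm{U}^-$ or by transporting \eqref{xfrelation} through the automorphism $\omega$. Since both sides of \eqref{xfrelation} are $k$-linear in $x$ and $\mathrm{U}^+$ is spanned by such monomials, it suffices to treat monomials. For the base cases, $x=1$ gives $[1,\bbf_i]=0$ and $r_i(1)=r'_i(1)=0$; and for $x=\bbe_j$ one reads off from $\Delta(\bbe_j)=\bbk_j\otimes\bbe_j+\bbe_j\otimes 1$ that $r_i(\bbe_j)=r'_i(\bbe_j)=\delta_{ij}$, so the right-hand side of \eqref{xfrelation} becomes $(q_i-q_i^{-1})^{-1}\delta_{ij}(\bbk_i-\bbk_i^{-1})$, which equals $[\bbe_j,\bbf_i]$ by the defining relation $\bbe_i\bbf_j-(-1)^{|\bbe_i||\bbf_j|}\bbf_j\bbe_i=\delta_{ij}\frac{\bbk_i-\bbk_i^{-1}}{q_i-q_i^{-1}}$.

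For the inductive step, factor $x=x_1x_2$ with $x_1\in\mathrm{U}^+_{\mu_1}$ and $x_2\in\mathrm{U}^+_{\mu_2}$ of smaller length, and apply the super-Leibniz rule for the super-commutator,
\[[x_1x_2,\bbf_i]=x_1[x_2,\bbf_i]+(-1)^{|x_2||\bbf_i|}[x_1,\bbf_i]x_2.\]
Substituting the inductive hypothesis for $[x_1,\bbf_i]$ and $[x_2,\bbf_i]$ produces four terms; moving each $\bbk_i^{\pm1}$ to one end via $\bbk_i z\bbk_i^{-1}=q^{(\alpha_i,\mathrm{deg}\,z)}z$, one collects the two terms that end in $\bbk_i$ into $\bigl(x_1r_i(x_2)+(-1)^{|\bbe_i||x_2|}q^{(\mu_2,\alpha_i)}r_i(x_1)x_2\bigr)\bbk_i=r_i(x_1x_2)\bbk_i$, and the two terms that begin with $\bbk_i^{-1}$ into $-(-1)^{|r'_i(x_1x_2)||\bbf_i|}\bbk_i^{-1}r'_i(x_1x_2)$, where $r'_i(x_1x_2)=(-1)^{|x_1||\bbe_i|}q^{(\mu_1,\alpha_i)}x_1r'_i(x_2)+r'_i(x_1)x_2$. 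This reproduces \eqref{xfrelation} for $x_1x_2$, using the twisted-derivation identities for $r_i,r'_i$ recorded before the lemma together with $|\bbe_i|=|\bbf_i|$ and the parity shift $|r'_i(z)|=|z|+|\bbe_i|$.

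The only genuine difficulty is the sign bookkeeping in the $\mathbb{Z}_2$-graded setting: one must check that the factors $(-1)^{|{\cdot}||\bbf_i|}$ produced by the super-Leibniz rule combine correctly with those appearing in the definitions of $r_i$ and $r'_i$ and with the parity shift under $r'_i$, and that the $q$-powers picked up when commuting $x_1$ or $x_2$ past $\bbk_i^{\pm1}$ are exactly the $q$-powers occurring in the twisted-derivation formulas. Once this is verified, \eqref{eyrelation} follows by the symmetric induction on the length of $y\in\mathrm{U}^-$, using $\Delta(\bbf_j)=1\otimes\bbf_j+\bbf_j\otimes\bbk_j^{-1}$ and the corresponding identities for $r_i,r'_i$ on $\mathrm{U}^-$; alternatively, one applies $\omega$ to \eqref{xfrelation}, using $\omega(\mathrm{U}^+_\mu)=\mathrm{U}^-_{-\mu}$ together with $\omega(\bbe_i)=(-1)^{|\bbe_i|}\bbf_i$, $\omega(\bbf_i)=\bbe_i$, $\omega(\bbk_i)=\bbk_i^{-1}$, and the way $\omega$ intertwines the maps $r_i,r'_i$ on the two triangular halves.
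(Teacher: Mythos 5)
Your proposal is correct and follows essentially the same route as the paper: verify the formula on the generators (the paper checks $y=1$ and $y=\bbf_i$ directly from the defining relation), then propagate to products via the super-Leibniz rule combined with the twisted-derivation identities for $r_i,r'_i$. The only cosmetic difference is that you work out \eqref{xfrelation} in detail and deduce \eqref{eyrelation} by symmetry, while the paper does the reverse.
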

\begin{proof}
We only prove Equation (\ref{eyrelation}), and Equation (\ref{xfrelation}) is similar.
For $y=1$ and $y=\bbf_i$ the formula follows from definition, so it is enough to show that if Equation (\ref{eyrelation}) holds for $y\in \mathrm{U}^-_{-\mu}$ and $y'\in\mathrm{U}^-_{-\mu'}$, then Equation (\ref{eyrelation}) holds for $yy'$.  This can be derived as follows.
\begin{align*}
&(q_i-q_i^{-1})[\bbe_i,yy']=(q_i-q_i^{-1})\big([\bbe_i,y]y'+(-1)^{|\bbe_i||y|}y[\bbe_i,y']\big)\\
=&(-1)^{|\bbe_i||r_i(y)|}\big(\bbk_ir_i(y)-r_i'(y)\bbk_i^{-1}\big)y'+(-1)^{|\bbe_i||y|}y\big((-1)^{|\bbe_i||r_i(y')|}\bbk_ir_i(y')-r_i'(y')\big)\bbk_i^{-1}\\=&(-1)^{|\bbe_i||r_i(yy')|}\bbk_i\big((-1)^{|\bbe_i||y'|}r_i(y)y'+q^{(\mu,\alpha_i)}yr_i(y')\big)-\big(q^{(\mu',\alpha_i)}r'_i(y)y'+(-1)^{|\bbe_i||y|}yr'_i(y')\big)\bbk_i^{-1}\\=&(-1)^{|\bbe_i||r_i(yy')|}\bbk_ir_i(yy')-r_i'(yy')\bbk_i^{-1}.
\end{align*}
\end{proof}
Combining the above lemma, we get the following equations, which  are very useful when proofing Theorem \ref{adinvariant}.
\begin{align*}
\mathrm{ad}(\bbe_i)(y\bbk_{\lambda}x)=&\bbe_iy\bbk_{\lambda}x-(-1)^{|\bbe_i|(|x|+|y|)}\bbk_iy\bbk_{\lambda}x\bbk_i^{-1}\bbe_i\\
=&[\bbe_i,y]\bbk_{\lambda}x+(-1)^{|y||\bbe_i|}y\bbe_i\bbk_{\lambda}x-(-1)^{|\bbe_i|(|x|+|y|)}\bbk_iy\bbk_{\lambda}x\bbk_i^{-1}\bbe_i\\
=&(q_i-q_i^{-1})^{-1}\big((-1)^{|\bbe_i||r_i(y)|}\bbk_ir_i(y)\bbk_{\lambda}x-r_i'(y)\bbk_i^{-1}\bbk_{\lambda}x\big)
\\&+(-1)^{|y||\bbe_i|}q^{(\lambda,-\alpha_i)}y\bbk_{\lambda}\bbe_ix-(-1)^{|\bbe_i|(|x|+|y|)}q^{(\mu-\nu,\alpha_i)}y\bbk_{\lambda}x\bbe_i\\
=&(q_i-q_i^{-1})^{-1}\big((-1)^{|\bbe_i||r_i(y)|}q^{(\nu-\alpha_i,-\alpha_i)}r_i(y)\bbk_{\lambda+\alpha_i}x-r_i'(y)\bbk_{\lambda-\alpha_i}x\big)
\\&+(-1)^{|y||\bbe_i|}q^{(\lambda,-\alpha_i)}y\bbk_{\lambda}\bbe_ix-(-1)^{|\bbe_i|(|x|+|y|)}q^{(\mu-\nu,\alpha_i)}y\bbk_{\lambda}x\bbe_i.
\end{align*}
Now,  we are ready to prove Theorem \ref{adinvariant}.

\textit{Proof of Theorem \ref{adinvariant}:}
It is enough to take $u$ to be generators, i.e., $\bbe_i,\bbf_i$ and $\bbk_i$. Furthermore, we may assume that\begin{equation*}
v=(y\bbk_{\nu})\bbk_{\lambda}x\quad \text{and}\quad v'=(y'\bbk_{\nu'})\bbk_{\lambda'}x',
\end{equation*}
with $\lambda,\lambda'\in\mathbb{Z}\Phi$ and $x\in\mathrm{U}^+_{\mu},x'\in\mathrm{U}^+_{\mu'},y\in\mathrm{U}^-_{-\nu},y'\in\mathrm{U}^-_{-\nu'}$ with weights $\mu,\mu',\nu,\nu'\in Q$.

It is obvious for $u=\bbk_i$.  
For $u=\bbe_i$, then \begin{align*}
&\mathrm{ad}(\bbe_i)(v)
=(q_i-q_i^{-1})^{-1}\big((-1)^{|\bbe_i||r_i(y)|}q^{(\nu-\alpha_i,-\alpha_i)}r_i(y)\bbk_{\lambda+\nu+\alpha_i}x-r_i'(y)\bbk_{\lambda+\nu-\alpha_i}x\big)
\\&+(-1)^{|y||\bbe_i|}q^{(\lambda+\nu,-\alpha_i)}y\bbk_{\lambda+\nu}\bbe_ix-(-1)^{|\bbe_i|(|x|+|y|)}q^{(\mu-\nu,\alpha_i)}y\bbk_{\lambda+\nu}x\bbe_i,\text{ and }\\
&\mathrm{ad}(S(\bbe_i))(v')
= -\mathrm{ad}(\bbk_i^{-1})\mathrm{ad}(\bbe_i)(v')=-q^{(\mu'+\alpha_i-\nu',-\alpha_i)}\mathrm{ad}(\bbe_i)(v')             \\
=&(q_i-q_i^{-1})^{-1}\big(-(-1)^{|\bbe_i||r_i(y')|}q^{(\mu',-\alpha_i)}r_i(y')\bbk_{\lambda'+\nu'+\alpha_i}x'+q^{(\mu'+\alpha_i-\nu',-\alpha_i)}r_i'(y')\bbk_{\lambda'+\nu'-\alpha_i}x'\big)
\\&-(-1)^{|y'||\bbe_i|}q^{(\lambda'+\mu'+\alpha_i,-\alpha_i)}y'\bbk_{\lambda'+\nu'}\bbe_ix'+(-1)^{|\bbe_i|(|x'|+|y'|)}q^{(\alpha_i,-\alpha_i)}y'\bbk_{\lambda'+\nu'}x'\bbe_i.
\end{align*}
Now the problem can be split into two cases.  First, if $\mu=\nu'$ and $\mu'+\alpha_i=\nu$, then
\begin{align*}
\langle \mathrm{ad}(\bbe_i&)v,v'\rangle=(-1)^{|r_i(y)|}(q_i-q_i^{-1})^{-1}(y',x)q^{(2\rho,v-\alpha_i)},\\ 
&\cdot\big( (-1)^{|\bbe_i||r_i(y)|}q^{(v-\alpha_i,-\alpha_i)-1/2(\lambda+2\alpha_i,\lambda')}(r_i(y),x')-   q^{-1/2(\lambda,\lambda')}(r'_i(y),x')  \big),
\end{align*}
and
\begin{align*}
\langle v, \mathrm{ad}(S(\bbe_i))v'\rangle
=&(-1)^{|y|}(y',x)q^{(2\rho,\nu)}\big(-(-1)^{|y'||\bbe_i|}q^{(\lambda'+\mu'+\alpha_i,-\alpha_i)
-1/2(\lambda,\lambda')}(y,\bbe_ix')\\&+(-1)^{|\bbe_i|(|x'|+|y'|)}q^{(\alpha_i,-\alpha_i)-1/2(\lambda,\lambda')}(y,x'\bbe_i)\big).
\end{align*}
Therefore, $\langle \mathrm{ad}(\bbe_i)v,v'\rangle=(-1)^{|\bbe_i||v|}\langle v,\mathrm{ad}(S(\bbe_i))v'\rangle $.

Second,  if $\mu+\alpha_i=\nu'$ and $\mu'=\nu$, then\begin{align*}
\langle \mathrm{ad}(\bbe_i)v, v'\rangle=&(-1)^{|y|}q^{(2\rho,\nu)}(y,x')\cdot\big(   (-1)^{|y||\bbe_i|}q^{(\lambda+\nu,-\alpha_i)-1/2(\lambda,\lambda')}\\
&\cdot(y',\bbe_ix)-(-1)^{|\bbe_i|(|x|+|y|)} q^{(\mu-\nu,\alpha_i)-1/2(\lambda,\lambda')}(y',x\bbe_i)\big),
\end{align*}
and \begin{align*}
\langle v,\mathrm{ad}(S(\bbe_i))v'\rangle=&(-1)^{|y|}(q_i-q_i^{-1})^{-1}q^{(2\rho,\nu)}(y,x')
\cdot\big(-(-1)^{|\bbe_i||r_i(y')|}q^{(\mu',-\alpha_i)-1/2(\lambda,\lambda'+2\alpha_i)}\\
&\cdot(r_i(y'),x)+q^{(\mu'+\alpha_i-\nu',-\alpha_i)-1/2(\lambda,\lambda')}(r'_i(y'),x) \big).
\end{align*}
Therefore, $\langle \mathrm{ad}(\bbe_i)v,v'\rangle=(-1)^{|\bbe_i||v|}\langle v,\mathrm{ad}(S(\bbe_i))v'\rangle $. Using a similar procedure, we can check for $u=\bbf_i$. Thus, we prove the ad-invariance of the bilinear form.

\begin{proposition}\label{propnondeg}
Let $u\in \mathrm{U}_q(\mathfrak{g})$. If $\langle v,u\rangle=0$ for all $v\in\mathrm{U}_q(\mathfrak{g})$, then $u=0$.
\end{proposition}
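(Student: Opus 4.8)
The statement asserts non-degeneracy of the Rosso form $\langle\,,\,\rangle$ in the second variable. The plan is to reduce this to the non-degeneracy of the skew-pairing $(\,,\,)\colon \mathrm{U}^- \times \mathrm{U}^+ \to k$ established in the proposition preceding Theorem~\ref{adinvariant}, combined with the triangular decomposition $\mathrm{U}^-\otimes\mathrm{U}^0\otimes\mathrm{U}^+\xrightarrow{\sim}\mathrm{U}_q(\mathfrak{g})$. First I would fix $0\ne u\in\mathrm{U}_q(\mathfrak{g})$ and write it, using the PBW basis, as a finite sum
\[
u=\sum_{\nu,\lambda,\mu} y_{\nu}\,\bbk_{\lambda}\,x_{\mu},
\]
where $x_\mu$ runs over a basis of $\mathrm{U}^+_\mu$, $y_\nu$ over a basis of $\mathrm{U}^-_{-\nu}$, and $\lambda\in\mathbb{Z}\Phi$. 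Since $\bbk_\lambda x_\mu = q^{-(\lambda,\mu)}\, (\text{something})$—more precisely the monomials $y_\nu\bbk_\lambda x_\mu$ are linearly independent—there is at least one triple $(\nu_0,\lambda_0,\mu_0)$ with nonzero coefficient.

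The key step is to test $u$ against an element of the shape $v'=(y'\bbk_{\nu'})\bbk_{\lambda'}x'$ chosen so that the bidegree selects exactly the $(\nu_0,\mu_0)$-component. By the defining formula \eqref{comp}, $\langle (y\bbk_\nu)\bbk_\lambda x, (y'\bbk_{\nu'})\bbk_{\lambda'}x'\rangle$ vanishes unless $\mu=\nu'$ and $\mu'=\nu$ (this is forced by the $\mathbb{Z}\Phi$-grading, since $(y,x')=0$ and $(y',x)=0$ whenever the degrees do not match); so choosing $x'\in\mathrm{U}^+_{\nu_0}$ and $y'\in\mathrm{U}^-_{-\mu_0}$ isolates the contribution of those monomials $y_\nu\bbk_\lambda x_\mu$ with $\nu=\nu_0$ and $\mu=\mu_0$. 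For these, \eqref{comp} gives
\[
\langle y_{\nu_0}\bbk_{\lambda}x_{\mu_0},\, (y'\bbk_{\mu_0})\bbk_{\lambda'}x'\rangle
=(-1)^{|y_{\nu_0}|}(y',x_{\mu_0})(y_{\nu_0},x')\,q^{(2\rho,\mu_0)}q^{-(\lambda,\lambda')/2},
\]
so that, fixing $x'$, $y'$, and summing over the finitely many $\lambda$ occurring, the pairing against $v'$ becomes
\[
(-1)^{|y_{\nu_0}|}q^{(2\rho,\mu_0)}(y',x_{\mu_0})\sum_{\lambda} c_{\nu_0,\lambda,\mu_0}\,(y_{\nu_0},x')\,q^{-(\lambda,\lambda')/2},
\]
where now $x_{\mu_0}$ and $y_{\nu_0}$ range over the relevant basis elements. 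The remaining task is a two-step decoupling: first use non-degeneracy of $(\,,\,)$ on $\mathrm{U}^-_{-\mu_0}\times\mathrm{U}^+_{\mu_0}$ to choose $y'$ detecting a prescribed $x_{\mu_0}$, then non-degeneracy on $\mathrm{U}^-_{-\nu_0}\times\mathrm{U}^+_{\nu_0}$ to choose $x'$ detecting a prescribed $y_{\nu_0}$; finally, since the $q^{-(\lambda,\lambda')/2}$ are distinct characters of $\mathbb{Z}\Phi$ in the variable $\lambda'$ (Artin's theorem on independence of characters, or a Vandermonde argument over the infinitely many choices of $\lambda'$), the coefficients $c_{\nu_0,\lambda,\mu_0}$ cannot all vanish—contradiction.

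The main obstacle I anticipate is bookkeeping the grading constraints cleanly: one must verify that \eqref{comp} really does force $\mu=\nu'$, $\mu'=\nu$ (this uses that the skew-pairing is zero between components of unequal $\mathbb{Z}\Phi$-degree, which follows from its defining values \eqref{double} and the multiplicativity axioms \eqref{skew-pairing}), and that the $\bbk_\lambda$-part can be separated from the $\mathrm{U}^\pm$-parts without interference. For $\mathfrak{g}=A(n,n)$ a little care is needed because the relevant grading group is $Q=\mathbb{Z}\tilde\Phi$ while $\bbk_\lambda$ lives in the quotient $\mathbb{Z}\Phi$; but the argument goes through verbatim once one works with the $\mathbb{Z}\tilde\Phi$-grading of $\mathrm{U}^\pm$ and the finitely many $\bbk_\lambda$ with $\lambda\in\mathbb{Z}\Phi$. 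Everything else is a routine, if slightly lengthy, disentangling of the triple sum.
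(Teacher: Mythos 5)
Your proposal is correct and follows essentially the same route as the paper: reduce via the triangular decomposition to a single bidegree block $\mathrm{U}^-_{-\nu}\mathrm{U}^0\mathrm{U}^+_{\mu}$, use non-degeneracy of the skew-pairing (the paper streamlines this by choosing dual bases, so the pairing of basis monomials becomes $\delta_{hj}\delta_{li}$ times the character $(q^{1/2})^{-(\lambda,\lambda')}$), and finish by linear independence of these characters in $\lambda$. No substantive difference.
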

\begin{proof}
Notice that $\mathrm{U}_q(\mathfrak{g})$ is the direct sum of all $\mathrm{U}^-_{-\nu}\mathrm{U}^0\mathrm{U}^+_{\mu}=\mathrm{U}^-_{-\nu}\bbk_{\nu}\mathrm{U}^0\mathrm{U}^+_{\mu}$ as vector space. Therefore, it is sufficient to show that if $u\in\mathrm{U}^-_{-\nu}\mathrm{U}^0\mathrm{U}^+_{\mu}$ with $\langle v,u\rangle=0$ for all $v\in\mathrm{U}^-_{-\mu}\mathrm{U}^0\mathrm{U}^+_{\nu}$, then $u=0$.\\
Since the skew-pairing between $\mathrm{U}^-$ and $\mathrm{U}^+$ is non-degenerate,  we can choose an arbitrary basis $u^{\mu}_1,u^{\mu}_2,\cdots,u^{\mu}_{r(\mu)}$ of $\mathrm{U}^+_{\mu}$ and dual basis  $v^{\mu}_1,v^{\mu}_2,\cdots,c^{\mu}_{r(\mu)}$ of $\mathrm{U}^-_{-\mu}$ for any $\mu\in Q$ with respect to skew-pairing, i.e.,  
$(v^{\mu}_i,u^{\mu}_j)=\delta_{ij}$ for all $1\leqslant i,j\leqslant r(\mu)$,  where $r(\mu)=\text{dim}\mathrm{U}_{\mu}^+$.

For any $\mu,\nu\in Q$,  we know that $\left\{\left.(v^{\nu}_i\bbk_{\nu})\bbk_{\lambda}u^{\mu}_j\right| \text{ for all }\lambda\in \mathbb{Z}\Phi \text{ and }1\leqslant i\leqslant r(\nu), 1\leqslant j\leqslant r(\mu)\right\}$ is a basis of $\mathrm{U}^-_{-\nu}\mathrm{U}^0\mathrm{U}^+_{\mu}$. From equation (\ref{comp}),  we have 
\begin{equation}
\langle(v^{\mu}_h\bbk_{\mu})\bbk_{\lambda'}u^{\nu}_l,(v^{\nu}_i\bbk_{\nu})\bbk_{\lambda}u^{\mu}_j\rangle=\delta_{hj}\delta_{li}(-1)^{|\mu|}(q^{1/2})^{-(\lambda,\lambda')}q^{(2\rho,\mu)}.
\end{equation}
Write $u=\mathop{\sum}\limits_{i,j,\lambda}a_{ij\lambda}(v^{\nu}_i\bbk_{\nu})\bbk_{\lambda}u^{\mu}_j$. The assumption $\langle v,u\rangle=0$ for all $v$ yields
\begin{equation}
\mathop{\sum}\limits_{\lambda\in \mathbb{Z}\Phi}(-1)^{|\nu|}a_{ij\lambda}(q^{1/2})^{-(\lambda,\lambda')}=0,\quad \text{for all}~ i,j,\lambda'.
\end{equation}
Thus, each $a_{ij\lambda}=0$; hence, $u=0$ as well.
\end{proof}
\subsection{Quantum supertrace}
Let $(A,\Delta, \varepsilon, S)$ be a $\mathbb{Z}_2$-graded Hopf algebra over field $k$ and $M,N$ be two $A$-modules. Then $M^*$ is an $A$-module with the action $(af)(m)=(-1)^{|a||f|}f(S(a)m)$ for all $m\in M,a\in A,f\in M^{*}$. $M\otimes N$ is an $A$-module with the action $a(m\otimes n)=\sum(-1)^{|a_{(2)}||m|}a_{(1)}m\otimes a_{(2)}n$ for all $a\in A,m\in M,n\in N$ where $\Delta(a)=\sum a_{(1)}\otimes a_{(2)}$. $\mathrm{Hom}_{k}(M,N)$ is an $A$-module with the action $(af)(m)=\sum (-1)^{|a_{(2)}||f|}a_{(1)}f(S(a_{(2)})m)$ for all $a\in A, m\in M, f\in \mathrm{Hom}_k(M,N)$. Supposing that $M$ is finite-dimensional,  we take $\{m_i\}$ to be a homogeneous basis of $M$ and $\{f_i\}$ to be the dual basis with respect to $\{m_i\}$. Then we have $|m_i|=|f_i|$ for all $i$ and the following isomorphism of $A$-modules:
\begin{align}
\Phi_{M,N}\colon N\otimes M^*\rightarrow\mathrm{Hom}(M,N),\quad n\otimes f\mapsto \varphi_{f,n},
\end{align}
with inverse homomorphism $\Psi_{M,N}\colon g\mapsto\sum g(m_i)\otimes f_i$, where $\varphi_{f,n}(m)=f(m)n$ for all $f\in M^*,g\in\mathrm{Hom}(M,N),m\in M,n\in N$. We also have a homomorphism of $A$-modules $\varepsilon_{M}\colon M^*\otimes M\rightarrow k$ with $\varepsilon_M(f\otimes m)=f(m)$ for all $f\in M^*,m\in M$.

In particular,  $A$ is the quantum superalgebra $\mathrm{U}_q(\mathfrak{g})$. Then we have $S^2(u)=\bbk_{2\rho}^{-1}u\bbk_{2\rho}$ since $(\rho,\alpha_i)=2(\alpha_i,\alpha_i)$ for all $i\in\bbi$.  We obtain a homomorphism of $A$-modules $\psi_{M}\colon M\rightarrow (M^*)^*$ with
\begin{equation}
\big(\psi_{M}(m)\big)(f)=(-1)^{|f||m|}f(\bbk_{2\rho}^{-1}m).
\end{equation}
Combined with the previous statements, we have the following homomorphisms of $A$-modules\begin{equation}
\xymatrix{
\mathrm{Str}_q^M\colon\mathrm{End}(M)\ar[r]^(.6){\Psi_{M,M}} &M\otimes M^*\ar[rr]^(.45){\psi_{M}\otimes 1_{M^*}}&& (M^*)^*\otimes M^*\ar[r]^(.7){\varepsilon_{M^*}} & k.
} 
\end{equation}
This composition is the so-called \textit{quantum supertrace} (we simply replace $\mathrm{Str}^M_q$ with $\mathrm{Str}_q$  if no confusion appears). More precisely,  if $ g\in\mathrm{End}(M)$, then
\begin{align*}
\mathrm{Str}_q(g)=&\varepsilon_{M^*}\circ(\psi_{M}\otimes 1_{M^*})\circ\Psi_{M,M}(g)=(-1)^{|g(m_i)||f_i|}\mathop{\sum}\limits_i f_i\big(\bbk_{2\rho}^{-1}g(m_i)\big)\nonumber\\
=&(-1)^{m_i}\mathop{\sum}\limits_i f_i\big(g(\bbk_{2\rho}^{-1}m_i)\big).
\end{align*} 
Let $A$ be a $\mathbb{Z}_2$-graded Hopf algebra and define the adjoint representation of $A$ as follows: $\mathrm{ad}(a)(b)=\sum (-1)^{|b||a_{(2)}|}a_{(1)}bS(a_{(2)})$. The map $\mathrm{ad}_M\colon A\rightarrow\mathrm{End}(M)$, which takes $a\in A$ to the action of $a$ on $M$, is a homomorphism of $A$-modules and we have
\begin{equation}\label{strad}
\mathrm{Str}_q\circ\mathrm{ad}_M(u)=(-1)^{|m_i|}\mathop{\sum}\limits_i f_i\big(u(\bbk_{2\rho}^{-1}m_i)\big).
\end{equation}
Indeed, this is the supertrace of $u\bbk_{2\rho}^{-1}$ acting on $M$.
In particular, we have
\begin{align}
\mathrm{ad}(\bbe_i)u&=\bbe_iu-(-1)^{|u||e_i|}\bbk_iu\bbk_i^{-1}\bbe_i,\label{ade}\\
\mathrm{ad}(\bbf_i)u&=(\bbf_iu-(-1)^{|u||f_i|}u\bbf_i)\bbk_i,\label{adf}\\
\mathrm{ad}(\bbk_i)u&=\bbk_iu\bbk_i^{-1}.\label{adk}
\end{align}
\subsection{Construct central elements}
In this subsection, we construct central elements for certain finite-dimensional $\mathrm{U}_q(\mathfrak{g})$-modules following Jantzen's book \cite{Jan}.

Let $\varphi:\mathrm{U}^-_{-\mu}\times \mathrm{U}^+_{\nu}\rightarrow k$ be a bilinear map and $\lambda\in \mathbb{Z}\Phi$. There is a unique element $u\in(\mathrm{U}^-_{-\nu}\bbk_{\nu})\bbk_{\lambda}\mathrm{U}^+_{\mu}=\mathrm{U}^-_{-\nu}\bbk_{\nu+\lambda}\mathrm{U}^+_{\mu}$ such that for all $x\in\mathrm{U}^+_{\nu},y\in\mathrm{U}^-_{-\nu},\lambda'\in \mathbb{Z}\Phi$
\begin{equation}
\langle(y\bbk_{\nu})\bbk_{\lambda'}x,u\rangle=\varphi(y,x)(q^{1/2})^{-(\lambda,\lambda')}.
\end{equation}
Indeed,  $u=\sum (-1)^{|y|}\varphi(v^{\mu}_j,u^{\nu}_i)q^{-(2\rho,\mu)}(v^{\nu}_i\bbk_{\nu}\bbk_{\lambda}u^{\mu}_j)$ will work and be unique according to Proposition \ref{propnondeg}.
\begin{lemma}\label{exists}
Let $M$ be a finite-dimensional $\mathrm{U}_q(\mathfrak{g})$-module such that all weights $\lambda$ of $M$ satisfy $2\lambda\in \mathbb{Z}\Phi$. Then there is for each  $m\in M$ and $f\in M^*$ a unique element $u\in\mathrm{U}_q(\mathfrak{g})$ such that $f(vm)=\langle v,u\rangle$ for all $v\in\mathrm{U}_q(\mathfrak{g})$.
\end{lemma}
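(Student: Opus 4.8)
## Proof plan for Lemma \ref{exists}

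The plan is to mimic the construction of central elements from the quantum group case (as in Jantzen's book), using the non-degenerate ad-invariant Rosso form from Theorem \ref{adinvariant} together with the structure of the weight-space decomposition of $M$. First I would reduce the problem to a statement about a fixed pair of weights: since $M$ is finite-dimensional with weight decomposition $M=\bigoplus_{\lambda}M_{\lambda}$, and since $f$ may be taken to be supported on a single weight space $M_{\lambda}^*$ and $m$ to lie in a single weight space $M_{\mu}$, the functional $v\mapsto f(vm)$ vanishes unless $v$ has the right $\mathbb{Z}\Phi$-degree, namely $v\in\mathrm{U}_q(\mathfrak{g})_{\lambda-\mu}$. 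Writing $\nu=\mu-\lambda$ (so we are looking at the component of degree $-\nu$, with the understanding that in the $A(n,n)$ case one works with $\mathbb{Z}\tilde\Phi$), the relevant part of $\mathrm{U}_q(\mathfrak{g})$ is the direct sum $\bigoplus_{\nu'\geqslant 0,\,\nu'\geqslant\nu}\mathrm{U}^-_{-\nu'}\bbk_{\nu'-\nu}\mathrm{U}^0\mathrm{U}^+_{\nu'-\nu}$ — a sum over finitely many $\nu'$ because $M$ is finite-dimensional, so that only finitely many weights occur.

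Next I would exhibit the element $u$ explicitly on each graded piece. Fix $\nu'$ and recall from the paragraph preceding the lemma that, given any bilinear map $\varphi\colon\mathrm{U}^-_{-\nu'}\times\mathrm{U}^+_{\nu'}\to k$ and any $\lambda_0\in\mathbb{Z}\Phi$, there is a unique $u_{\varphi,\lambda_0}\in\mathrm{U}^-_{-\nu'}\bbk_{\nu'+\lambda_0}\mathrm{U}^+_{\nu'-\nu}$ with $\langle (y\bbk_{\nu'})\bbk_{\lambda'}x,u_{\varphi,\lambda_0}\rangle=\varphi(y,x)(q^{1/2})^{-(\lambda_0,\lambda')}$ for all $x,y,\lambda'$; in fact $u_{\varphi,\lambda_0}=\sum_{i,j}(-1)^{|y|}\varphi(v^{\mu}_j,u^{\nu'}_i)q^{-(2\rho,\nu')}(v^{\nu'}_i\bbk_{\nu'+\lambda_0}u^{\mu}_j)$ in the notation there. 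So the task is to choose, for each $\nu'$, the right bilinear form $\varphi$ and the right shift $\lambda_0$ so that the resulting form-pairing reproduces $f(vm)$. Concretely, for $v=(y\bbk_{\nu'})\bbk_{\lambda'}x$ one has $vm=q^{(\mu,\lambda')}\,y\bbk_{\nu'}\,(xm)$, and $xm$ lies in a weight space whose $\bbk_{\nu'}$-eigenvalue is again a power of $q$ determined by $\mu$, $\nu'$ and $2\rho$; collecting these scalars shows that $v\mapsto f(vm)$ is of exactly the shape $\varphi(y,x)(q^{1/2})^{-(\lambda_0,\lambda')}$ for a suitable $\varphi$ (built from $x\mapsto f(\bbk_\ast xm)$, bilinear in $y$ trivially since $y$ acts after the $\mathrm{U}^0$-part, though more care is needed because $vm$ depends on $y$ as well — here one uses that $f$ is a weight vector so the $y$-dependence also collapses to the pairing). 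The hypothesis $2\lambda\in\mathbb{Z}\Phi$ for all weights $\lambda$ of $M$ is precisely what guarantees that the exponent $\lambda_0$ appearing (which involves half-integral combinations coming from the $q^{-(\lambda,\lambda')/2}$ factor in \eqref{comp}) actually lies in $\mathbb{Z}\Phi$, so that $\bbk_{\nu'+\lambda_0}$ is a legitimate element of $\mathrm{U}^0$. Summing the $u_{\varphi,\lambda_0}$ over the finitely many relevant $\nu'$ gives the desired $u$, and uniqueness is immediate from the non-degeneracy of $\langle\,,\,\rangle$ (Proposition \ref{propnondeg}).

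The main obstacle I anticipate is the bookkeeping of the scalar factors: one must carefully track how $vm$ transforms under the $\mathrm{U}^0$-part $\bbk_{\nu'}\bbk_{\lambda'}$, reconcile the $q^{(2\rho,\nu')}$ and $q^{-(\lambda,\lambda')/2}$ factors built into the definition \eqref{comp} of the Rosso form with the $q^{(\text{weight},\lambda')}$ factors coming from the module action, and verify that the combination of exponents that ends up decorating $\bbk$ is an element of $\mathbb{Z}\Phi$ rather than merely $\tfrac12\mathbb{Z}\Phi$ — this is exactly where the hypothesis on the weights of $M$ enters and must not be skipped. Once the degree-by-degree identification is set up correctly, both existence (solve on each graded component and sum) and uniqueness (non-degeneracy of the form) follow formally.
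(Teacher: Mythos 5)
Your proposal is correct and follows essentially the same route as the paper: reduce to weight vectors $m$ and $f$, use finite-dimensionality to restrict to finitely many graded pieces $\mathrm{U}^-_{-\mu}\mathrm{U}^0\mathrm{U}^+_{\nu}$, observe that on each piece $f(y\bbk_{\mu}\bbk_{\eta}xm)=(q^{1/2})^{(\eta,2\lambda+2\nu)}f(y\bbk_{\mu}xm)$ so the construction preceding the lemma applies with shift $2(\lambda+\nu)\in\mathbb{Z}\Phi$ (this is exactly where the hypothesis enters, as you note), and conclude uniqueness from Proposition \ref{propnondeg}. The scalar bookkeeping you flag as the main obstacle is precisely the one-line computation just quoted, so there is no gap.
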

\begin{proof}
The uniqueness follows from Proposition \ref{propnondeg}. To prove the existence of $u$, we may assume that $f$ and $m$ are weight vectors, since $f(\cdot m)$ depends linearly on $f$ and $m$. Suppose that there are two weights $\lambda$ and $\lambda'$ of $M$ with $m\in M_{\lambda}$ and $f\in(M^*)_{\lambda'}$; i.e., with $f(M_{\lambda''})=0$ for all $\lambda''\neq\lambda$. We have $\mathrm{U}^+_{\nu}m\in M_{\lambda+\nu}$ for all $\nu$. As $M$ has only finitely many weights, there are only finitely many $\nu$ with $\mathrm{U}^+_{\nu}m\neq0$. Since $\mathrm{U}^-_{-\mu}\mathrm{U}^0\mathrm{U}^+_{\nu}m\subseteq M_{\lambda+\nu-\mu}$ for all $\mu$ and $\nu$, we get $f(\mathrm{U}^-_{-\mu}\mathrm{U}^0\mathrm{U}^+_{\nu}m)=0$ unless $\lambda'=\lambda+\nu-\mu$. This shows that there are only finitely many pairs $(\mu,\nu)$ with $f(\mathrm{U}^-_{-\mu}\mathrm{U}^0\mathrm{U}^+_{\nu}m)\neq0$. For all $x\in\mathrm{U^+_{\nu}},y\in\mathrm{U}^-_{-\mu}$ and $\eta\in \mathbb{Z}\Phi$,\begin{equation}
f(y\bbk_{\mu}\bbk_{\eta}xm)=q^{(\eta,\lambda+\nu)}f(y\bbk_{\mu} xm)=(q^{1/2})^{(\eta,2\lambda+2\nu)}f(y\bbk_{\mu}xm).
\end{equation}
For all $\mu$ and $\nu$, the function $(y,x)\mapsto f(y\bbk_{\mu}xm)$ is bilinear. We now use that $2(\lambda+\nu)\in \mathbb{Z}\Phi$. We get an element $u_{\nu\mu}\in\mathrm{U}^-_{-\mu}\mathrm{U}^0\mathrm{U}^+_{\nu}$ with $\langle v,u_{\nu\mu}\rangle=f(vm)$ for all $v\in\mathrm{U}^-_{-\mu}\mathrm{U}^0\mathrm{U}^+_{\nu}$. Then $u=\sum u_{\nu\mu}$ will satisfy our claim. 
\end{proof}
\begin{lemma}\label{zlambda}
Let $M$ be a finite-dimensional $\mathrm{U}_q(\mathfrak{g})$-module such that all weights $\lambda$ of $M$ satisfy $2\lambda\in \mathbb{Z}\Phi$. Then there is a unique element $z_{M}\in\mathrm{U}_q(\mathfrak{g})$ such that $\langle u,z_{M}\rangle$ is equal to the supertrace of $u\bbk_{2\rho}^{-1}$ acting on $M$ for all $u\in\mathrm{U}_q(\mathfrak{g})$. The element $z_{M}$ is contained in the center $\mathcal{Z}(\mathrm{U}_q(\mathfrak{g}))$ of $\mathrm{U}_q(\mathfrak{g})$.
\end{lemma}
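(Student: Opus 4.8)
The plan is to apply Lemma \ref{exists} to the linear functional $u \mapsto \mathrm{Str}_q \circ \mathrm{ad}_M(u)$ and then check that the resulting element is central by exploiting the ad-invariance of the Rosso form. First I would recall from Equation (\ref{strad}) that $\mathrm{Str}_q \circ \mathrm{ad}_M(u)$ equals the supertrace of $u\bbk_{2\rho}^{-1}$ acting on $M$, so that the map $\varphi\colon u \mapsto \mathrm{Str}_q\circ\mathrm{ad}_M(u)$ is a linear functional on $\mathrm{U}_q(\mathfrak{g})$. Using the homogeneous basis $\{m_i\}$ of $M$ and dual basis $\{f_i\}$ of $M^*$, one has $\varphi(u) = (-1)^{|m_i|}\sum_i f_i\big(u(\bbk_{2\rho}^{-1}m_i)\big)$, which is a finite sum of functionals of the form $v \mapsto f(v\,m)$ with $f = f_i$, $m = \bbk_{2\rho}^{-1}m_i$. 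Since $\bbk_{2\rho}^{-1}m_i$ is again a weight vector (of the same weight as $m_i$) and every weight $\lambda$ of $M$ satisfies $2\lambda\in\mathbb{Z}\Phi$ by hypothesis, Lemma \ref{exists} applies to each summand and produces a unique $u_i\in\mathrm{U}_q(\mathfrak{g})$ with $f_i(v\,\bbk_{2\rho}^{-1}m_i) = \langle v, u_i\rangle$ for all $v$. Setting $z_M = (-1)^{|m_i|}\sum_i u_i$ (summed with the appropriate signs) gives an element with $\langle u, z_M\rangle = \mathrm{Str}_q(u\bbk_{2\rho}^{-1}|_M)$ for all $u\in\mathrm{U}_q(\mathfrak{g})$; uniqueness is immediate from the non-degeneracy of $\langle\,,\,\rangle$ (Proposition \ref{propnondeg}).

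Next I would prove centrality. It suffices to show $\langle u, z_M a\rangle = \langle u, a z_M\rangle$ for all $a, u\in\mathrm{U}_q(\mathfrak{g})$, and since the Rosso form is non-degenerate this will force $z_M a = a z_M$. The key identity is the ad-invariance $\langle \mathrm{ad}(a)v, v'\rangle = (-1)^{|a||v|}\langle v, \mathrm{ad}(S(a))v'\rangle$ from Theorem \ref{adinvariant}, which lets one transport the adjoint action between the two arguments of the form. The idea is to rewrite both $\langle u, z_M a\rangle$ and $\langle u, a z_M\rangle$ in terms of $\mathrm{Str}_q \circ \mathrm{ad}_M$ evaluated on suitable elements, and then use that $\mathrm{Str}_q$ of an operator of the form $\mathrm{ad}_M(\mathrm{ad}(a)(x))$ behaves well — concretely, that the supertrace is invariant under the adjoint action in the appropriate twisted sense, because $\bbk_{2\rho}$ implements $S^2$. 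One computes that $\mathrm{Str}_q\big(\mathrm{ad}_M(u)\,\mathrm{ad}_M(a)\big)$ and $\mathrm{Str}_q\big(\mathrm{ad}_M(a)\,\mathrm{ad}_M(u)\big)$ are related by a supertrace cyclicity identity valid for the quantum supertrace (the defining property that makes $\mathrm{Str}_q$ a trace on $\mathrm{End}(M)$ as $\mathrm{U}_q(\mathfrak{g})$-modules), and combining this with the ad-invariance of $\langle\,,\,\rangle$ yields $\langle u, z_M a - a z_M\rangle = 0$ for all $u$. I would carry this out by reducing to $a$ a generator $\bbe_i, \bbf_i, \bbk_i$, using the explicit formulas (\ref{ade})--(\ref{adk}) for $\mathrm{ad}(a)$ on $\mathrm{U}_q(\mathfrak{g})$.

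The main obstacle I anticipate is the centrality argument rather than the existence argument: one must carefully match up the two sides using both the ad-invariance of the Rosso form and the cyclic/trace property of $\mathrm{Str}_q$, keeping track of the sign factors coming from the $\mathbb{Z}_2$-grading and the $\bbk_{2\rho}$-twist that encodes $S^2$. In particular, verifying that $\mathrm{Str}_q\circ\mathrm{ad}_M$ is a trace-like functional compatible with the Hopf-algebraic adjoint action — i.e., that $\mathrm{Str}_q(\mathrm{ad}_M(u)g) = \pm\,\mathrm{Str}_q(g\,\mathrm{ad}_M(\text{something involving } S(u)))$ with the correct correction — is where the computation is genuinely delicate; this is precisely the place where the identity $S^2(u) = \bbk_{2\rho}^{-1}u\bbk_{2\rho}$ and the fact that $(\rho,\alpha_i) = 2(\alpha_i,\alpha_i)$ enter. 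Once this trace-compatibility is established, centrality follows formally, and the lemma is complete.
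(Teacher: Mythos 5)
Your existence-and-uniqueness argument coincides with the paper's: write the functional $u\mapsto\mathrm{Str}_q(u\bbk_{2\rho}^{-1}|_M)$ as the finite signed sum $\sum_i(-1)^{|m_i|}f_i(u\bbk_{2\rho}^{-1}m_i)$, apply Lemma \ref{exists} to each summand, and use Proposition \ref{propnondeg} for uniqueness. That part is fine.

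In the centrality step you have all the right ingredients, but the way you propose to combine them contains a gap. Comparing $\mathrm{Str}_q(\mathrm{ad}_M(u)\,\mathrm{ad}_M(a))$ with $\mathrm{Str}_q(\mathrm{ad}_M(a)\,\mathrm{ad}_M(u))$ only compares $\langle ua,z_M\rangle$ with $\langle au,z_M\rangle$, i.e., it constrains the form in its \emph{first} slot; and the ad-invariance of Theorem \ref{adinvariant} cannot convert this into a statement about $z_Ma$ versus $az_M$ in the second slot, because $\mathrm{ad}(a)u=\sum\pm\, a_{(1)}uS(a_{(2)})$ is the Hopf-algebraic adjoint action, not the commutator $au-ua$. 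So the chain ``trace cyclicity $+$ ad-invariance $\Rightarrow\langle u,\,z_Ma-az_M\rangle=0$'' does not go through as stated. What you actually need is the equivariance of the functional in the first slot, namely $\langle\mathrm{ad}(v)u,z_M\rangle=\varepsilon(v)\langle u,z_M\rangle$ for all $u,v$. The paper obtains this for free, since $\mathrm{Str}_q^M\circ\mathrm{ad}_M$ is by construction a composite of $\mathrm{U}_q(\mathfrak{g})$-module homomorphisms landing in the trivial module (the identity $S^2(u)=\bbk_{2\rho}^{-1}u\bbk_{2\rho}$ has already been absorbed into the definition of $\mathrm{Str}_q$ via $\psi_M$); your plan of verifying it for generators by a cyclicity computation would also work, but it is this identity, not a two-sided trace symmetry, that must be proved. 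Once you have it, Theorem \ref{adinvariant} gives $\varepsilon(v)\langle u,z_M\rangle=(-1)^{|v||u|}\langle u,\mathrm{ad}(S(v))z_M\rangle$, Proposition \ref{propnondeg} upgrades this to $(-1)^{|v|}\mathrm{ad}(v)z_M=\varepsilon(v)z_M$ for all $v$, and only at this final stage do the formulas (\ref{ade})--(\ref{adk}) enter, translating ad-invariance of $z_M$ into commutation with $\bbk_i,\bbe_i,\bbf_i$. With that reorganization your argument becomes the paper's proof.
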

\begin{proof}
Let $\{m_1,m_2,\cdots,m_r\}$ be a homogeneous basis of $M$ and $\{f_1,f_2,\cdots,f_r\}$ is the dual basis of $M^*$, then the supertrace of $u\bbk_{2\rho}^{-1}$ acting on $M$ is equal to $\mathop{\sum}\limits_{i=1}^r(-1)^{|m_i|}f_i(u\bbk_{2\rho}^{-1}m_i)=\langle u,z_{M}\rangle$.
In this way, the existence and uniqueness of $z_{M}$ follows from Lemma \ref{exists}. Recall that the map $\mathrm{ad}_{M}\colon \mathrm{U}_q(\mathfrak{g})\rightarrow \mathrm{End}(M)$ is a homomorphism of $\mathrm{U}_q(\mathfrak{g})$-modules. We notice that $\mathrm{Str}_q^{M}\circ\mathrm{ad}_{M}(u)$ is the supertrace of $u\bbk_{2\rho}^{-1}$ acting on $M$ for all $u\in\mathrm{U}_q(\mathfrak{g})$; i.e., $\mathrm{Str}_q^{M}\circ\mathrm{ad}_{M}(u)=\langle u,z_{M}\rangle$ for all $u\in\mathrm{U}_q(\mathfrak{g})$ by (\ref{strad}). This means that for all $u,v\in\mathrm{U}_q(\mathfrak{g})$,
\begin{equation}
\varepsilon(v)\langle u,z_{M}\rangle=v\cdot (\mathrm{Str}_q^{M}\circ\mathrm{ad}_{M}(u)) =\langle \mathrm{ad}(v)u,z_{M}\rangle=(-1)^{|v||u|}\langle u,\mathrm{ad}(S(v))z_{M}\rangle.
\end{equation}
Hence, $\varepsilon(v)z_{M}=(-1)^{|v|(|v|+|z_{M}|)}\mathrm{ad}(S(v))z_{M}=(-1)^{|v|}\mathrm{ad}(S(v))z_{M}$ for all $v\in\mathrm{U}_q(\mathfrak{g})$ by Proposition \ref{propnondeg}. We also have $(-1)^{|v|}\mathrm{ad}(v)z_{M}=\varepsilon(v)z_{M}$ by $\varepsilon\circ S=\varepsilon$. Now (\ref{ade})-(\ref{adk}) easily yield  that $z_{M}$ commutes with all $\bbk_i,\bbe_i,\bbf_i$ and  is therefore central in $\mathrm{U}_q(\mathfrak{g})$.
\end{proof}

\section{Harish-Chandra homomorphism of quantum superalgebras}\label{HChomomorphism}
\subsection{The Harish-Chandra homomorphism}
In the previous section, we used the Drinfeld double to construct an ad-invariant bilinear form in Theorem \ref{adinvariant}, which was also non-degenerate (see Proposition \ref{propnondeg}). By using this form and quantum supertrace, we can construct the central elements of $\mathrm{U}_q(\mathfrak{g})$, which contributed to establish the Harish-Chandra isomorphism  for quantum superalgebras $\mathrm{U}_q(\mathfrak{g})$. Now we are ready to define the Harish-Chandra homomorphism.

For each $\lambda\in \Lambda$, there is an algebra homomorphism, also denoted by $\lambda\colon\mathrm{U}^0\rightarrow\mathbb{C}$, $\lambda(\bbk_{\mu})=q^{(\lambda,\mu)}$ for all $\mu\in \mathbb{Z}\Phi$. Obviously, $(\lambda+\lambda')(h)=\lambda(h)\lambda'(h)$ for $h\in\mathrm{U}^0$ and $\lambda, \lambda'\in\Lambda$.

The triangular decomposition of quantum superalgebra $\mathrm{U}_q(\mathfrak{g})$ implies a direct sum decomposition as follows:
\begin{equation*}
  \mathrm{U}_0=\mathrm{U}^0\oplus\mathop{\bigoplus}\limits_{\nu>0}\mathrm{U}^-_{-\nu}\mathrm{U}^0\mathrm{U}^+_{\nu}.
\end{equation*}
Let $\pi\colon\mathrm{U}_0\rightarrow\mathrm{U}^0$ be the projection with respect to this decomposition. One can check that $\mathop{\bigoplus}\limits_{\nu>0}\mathrm{U}^-_{-\nu}\mathrm{U}^0\mathrm{U}^+_{\nu}$ is a two-sided ideal of $\mathrm{U}_0$. Thus, $\pi$ is an algebra homomorphism.
Denoting the center of $\mathrm{U}_q(\mathfrak{g})$ by $\mathcal{Z}(\mathrm{U}_q(\mathfrak{g}))$\footnote{In general, the center of the Lie superalgebra and quantum superalgebra is $\mathbb{Z}_2$-graded \cite[Section 2.2]{ChengWang}.  Similar to the basic Lie superalgebra case,  the center of $\mathrm{U}_q(\mathfrak{g})$ consists of only even elements.  However, the center contains odd part is also interesting in some aspects; e.g., the skew center of generalized quantum groups \cite{BY}.}, 
we have $\mathcal{Z}(\mathrm{U}_q(\mathfrak{g}))\subseteq \mathrm{U}_0$ by Proposition \ref{centerU_0}. Let $z\in \mathcal{Z}(\mathrm{U}_q(\mathfrak{g}))$ and write $z=\mathop{\sum}\limits_{\nu\geqslant0}z_{\nu}$ where each $z_{\nu}\in\mathrm{U}^-_{-\nu}\mathrm{U}^0\mathrm{U}^+_{\nu}$,  thus $\pi(z)=z_0$.  If we take $v_{\lambda}\in \Delta_q(\lambda)_{\lambda}$, then $zv_{\lambda}=z_0v_{\lambda}=\lambda(z_0)v_{\lambda}$. Since $z$ is the center element of $\mathrm{U}_q(\mathfrak{g})$, this implies $zv=\lambda(z_0)v,~\forall~ v\in \Delta_q(\lambda)$, so it acts as scalar $\lambda(z_0)=\lambda(\pi(z))$ on $\Delta_q(\lambda)$. We set $\chi_{\lambda}\colon\mathcal{Z}(\mathrm{U}_q(\mathfrak{g}))\rightarrow k$ by $\chi_{\lambda}(z)=\lambda(\pi(z)).$

For $\lambda\in \Lambda$, we define an algebra automorphism
\begin{equation*}
  \gamma_{\lambda}\colon\mathrm{U}^0\rightarrow \mathrm{U}^0\quad  \text{by}\quad \gamma_{\lambda}(h)=\lambda(h)h,\quad  \text{ for all } h\in\mathrm{U}^0.
\end{equation*}
Then
\begin{equation*}
  \gamma_{\lambda}(\bbk_{\mu})=q^{(\lambda,\mu)}\bbk_{\mu},\quad \text{ for all }  \lambda\in\Lambda,~ \mu\in \mathbb{Z}\Phi.
\end{equation*}
Obviously, $\gamma_0$ is the identity map, and
\begin{equation*}
  \gamma_{\lambda}\circ\gamma_{\lambda'}=\gamma_{\lambda+\lambda'}\quad \text{and}\quad \lambda'(\gamma_{\lambda}(h))=(\lambda+\lambda')(h),\text{ for all }\lambda,\lambda'\in \Lambda,~h\in \mathrm{U}^0.
\end{equation*}
Inspired by the quantum group case, we define the \textit{Harish-Chandra homomorphism} $\mathcal{HC}$ of $\mathrm{U}_q(\mathfrak{g})$ to be the composite 
\[\mathcal{HC}\colon\mathcal{Z}(\mathrm{U}_q(\mathfrak{g}))\hookrightarrow \mathrm{U}_0\xrightarrow{\pi} \mathrm{U}^0 \xrightarrow{\gamma_{-\rho}}  \mathrm{U}^0.
\]
Assume that $h=\mathcal{HC}(z)=\gamma_{-\rho}\circ\pi(z)$, we have $\chi_{\lambda}(z)=\lambda(\pi(z))=\lambda(\gamma_{\rho}(h))=(\lambda+\rho)(h)$ for all $\lambda\in\Lambda$.
\begin{lemma}\label{injective}
The Harish-Chandra homomorphism $\mathcal{HC}$ is injective.
\end{lemma}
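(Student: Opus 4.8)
The plan is to show that if $z \in \mathcal{Z}(\mathrm{U}_q(\mathfrak{g}))$ lies in the kernel of $\mathcal{HC}$, then $z$ acts by zero on every finite-dimensional $\mathrm{U}_q(\mathfrak{g})$-module, whence $z = 0$ by Theorem \ref{annihilate} (together with Theorem \ref{annihilate A(n,n)} in the $A(n,n)$ case). The link between $\mathcal{HC}$ and the action on modules is already recorded above: writing $h = \mathcal{HC}(z)$, a central element $z$ acts on the Verma module $\Delta_q(\lambda)$ by the scalar $\chi_\lambda(z) = (\lambda + \rho)(h)$ for every $\lambda \in \Lambda$. So $\mathcal{HC}(z) = 0$ forces $\chi_\lambda(z) = 0$ for all $\lambda \in \Lambda$, and in particular $z$ annihilates every simple highest weight module $L_q(\lambda)$ with $\lambda \in \Lambda$, since each such module is a quotient of $\Delta_q(\lambda)$.

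The first step is therefore to reduce the vanishing of $z$ on \emph{all} finite-dimensional modules to the vanishing on the finite-dimensional simple highest weight modules $L_q(\lambda)$, $\lambda \in \Lambda^+$. For $\mathfrak{g} \neq A(n,n)$ this is immediate: every finite-dimensional $\mathrm{U}_q(\mathfrak{g})$-module $M$ has a composition series whose factors are of the form $L_q(\mu)$ or $\Pi L_q(\mu)$ with $\mu$ a dominant integral weight in $\frac{1}{2}\mathbb{Z}\Phi \subseteq \Lambda$ (the weights of type-$\mathbf 1$ modules lie in $\frac12\mathbb{Z}\Phi$, and the relevant highest weights lie in $\Lambda^+$), so if $z$ kills each $L_q(\mu)$ it kills $M$, and then $z = 0$ by Theorem \ref{annihilate}. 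For $\mathfrak{g} = A(n,n)$ one restricts attention to the \emph{typical} finite-dimensional simple modules and invokes Theorem \ref{annihilate A(n,n)} instead; one just needs that every typical finite-dimensional simple module is still a quotient of a Verma module $\Delta_q(\lambda)$ with $\lambda \in \Lambda$ (using the partial order on $\mathbb{C}\tilde\Phi$), so that $\chi_\lambda(z) = 0$ again gives $z|_{L_q(\lambda)} = 0$.

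The second step is to check that $\mathcal{HC}$ is indeed an algebra homomorphism, not merely a linear map — this is needed only implicitly (for the statement to make sense as phrased), and it follows since $\pi \colon \mathrm{U}_0 \to \mathrm{U}^0$ is an algebra homomorphism (the complement $\bigoplus_{\nu>0}\mathrm{U}^-_{-\nu}\mathrm{U}^0\mathrm{U}^+_\nu$ is a two-sided ideal of $\mathrm{U}_0$) and $\gamma_{-\rho}$ is an automorphism of $\mathrm{U}^0$; combined with $\mathcal{Z}(\mathrm{U}_q(\mathfrak{g})) \subseteq \mathrm{U}_0$ (Corollary \ref{centerU_0}) this makes the composite well-defined and multiplicative. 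With this in hand, injectivity is exactly the implication ``$\mathcal{HC}(z) = 0 \Rightarrow z = 0$'' established in the previous paragraph.

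I do not expect a serious obstacle here: the argument is essentially the standard one for quantum groups, and all the nontrivial inputs — the non-degeneracy of the ad-invariant form, the annihilation theorems, and the containment $\mathcal{Z}(\mathrm{U}_q(\mathfrak{g})) \subseteq \mathrm{U}_0$ — are already available. The one point that deserves care is the $A(n,n)$ case, where one must make sure that ``$z$ kills all typical finite-dimensional simples'' really follows from ``$\chi_\lambda(z) = 0$ for all $\lambda \in \Lambda$'', i.e.\ that typical finite-dimensional simples have highest weights in $\Lambda$ and are Verma quotients; granting that, Theorem \ref{annihilate A(n,n)} closes the argument exactly as Theorem \ref{annihilate} does in the generic case.
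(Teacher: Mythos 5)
Your overall strategy (reduce to the annihilation theorems via the scalars $\chi_\lambda(z)=(\lambda+\rho)(\mathcal{HC}(z))$) is genuinely different from the paper's proof, but it has a real gap at the reduction step. The claim ``every finite-dimensional $M$ has a composition series with factors $L_q(\mu)$ or $\Pi L_q(\mu)$, so if $z$ kills each $L_q(\mu)$ it kills $M$'' is false in a non-semisimple category: if $z$ annihilates every composition factor of $M$, then $z$ maps each term of the composition series into the previous one, so $z$ acts \emph{nilpotently} on $M$, not necessarily by zero. Since $z$ is central and even, $z|_M$ is a $\mathrm{U}_q(\mathfrak{g})$-endomorphism, and on an indecomposable module in an atypical block such an endomorphism can be a nonzero nilpotent (this is exactly the familiar failure of central elements to act semisimply on, say, projective covers in atypical blocks). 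This matters because Theorem \ref{annihilate} is proved by evaluating $u$ on the tensor products $L_q(\lambda)\otimes L_q^{\omega}(\lambda')$, which are in general \emph{not} completely reducible — indeed the whole point of the extra work in Theorem \ref{annihilate A(n,n)} is to choose $\lambda,\lambda'$ so that such a tensor product becomes semisimple. So for $\mathfrak{g}\neq A(n,n)$ you have not shown that $z$ annihilates the modules that Theorem \ref{annihilate} actually requires. (Your $A(n,n)$ branch, by contrast, is fine: Theorem \ref{annihilate A(n,n)} only needs annihilation of typical finite-dimensional \emph{irreducibles}, each of which is a quotient of a Verma module on which $z$ acts by $\chi_\lambda(z)=0$.) To repair the general case you would need an analogue of Theorem \ref{annihilate A(n,n)} for all types, i.e.\ that annihilating all typical finite-dimensional simples already forces $u=0$ — a nontrivial additional statement the paper does not provide.

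The paper's own proof sidesteps this entirely: it writes $z=\sum_{\mu\geqslant 0}z_\mu$ with $z_\mu\in\mathrm{U}^-_{-\mu}\mathrm{U}^0\mathrm{U}^+_\mu$, notes $z_0=\pi(z)=0$, takes a minimal $\beta>0$ with $z_\beta\neq 0$, and uses $[\bbe_i,z]=0$ together with the minimality of $\beta$ to conclude $\sum_j[\bbe_i,y_j]h_{jk}=0$; evaluating on the highest weight vector of a single well-chosen finite-dimensional simple $L_q(\lambda)$ and invoking the bijectivity of $\mathrm{U}^-_{-\beta}\to L_q(\lambda)_{\lambda-\beta}$ from Proposition \ref{3.3} then forces $h_{jk}=0$, a contradiction. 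You may want to adopt that route, or else supply the missing semisimplicity/annihilation input for your reduction.
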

\begin{proof}
Suppose $z=\mathop{\sum}\limits_{\mu\geqslant0}z_{\mu}\in\mathcal{Z}(\mathrm{U}_q(\mathfrak{g}))$ with $\mathcal{HC}(z)=\gamma_{-\rho}\circ\pi(z)=0$ where $z_{\mu}\in\mathrm{U}^-_{-\mu}\mathrm{U}^0\mathrm{U}^+_{\mu}$, then $z_0=\pi(z)=0$ since $\gamma_{-\rho}$ is an algebra automorphism. If we assume $z\neq 0$, then there exists $z_{\mu}\neq 0$ for some $\mu\in Q$. Let $\beta\in Q$ be a minimal element satisfying $\beta>0$ and $z_{\beta}\neq0$. Let $\{y_i\}$ and $\{x_k\}$ be sets of bases of $\mathrm{U}^-_{-\beta}$ and $\mathrm{U}^+_{\beta}$, respectively, and write
\begin{align*}
z_{\beta}=\mathop{\sum}\limits_{j,k}y_jh_{jk}x_k,\quad h_{jk}\in\mathrm{U}^0.
\end{align*}
For all $x\in\mathrm{U}^+_{\gamma},h\in\mathrm{U}^0,y\in\mathrm{U}^-_{-\gamma}$ we have $[\bbe_i,yhx]=[\bbe_i,y]hx+(-1)^{|y||\bbe_i|}y[\bbe_i,hx]$ with $[\bbe_i,y]hx\in\mathrm{U}^-_{-(\gamma-\alpha_i)}\mathrm{U}^0\mathrm{U}^+_{\gamma}$ and $y[\bbe_i,hx]\in\mathrm{U}^-_{-\gamma}\mathrm{U}^0\mathrm{U}^+_{\gamma+\alpha_i}$ by equation (\ref{eyrelation}). Since $[\bbe_i,z]=0$, we have $\mathop{\sum}\limits_{j,k}[\bbe_i,y_j]h_{jk}x_k=0$ by the minimality of $\beta$. Hence $\mathop{\sum}\limits_{j}[\bbe_i,y_j]h_{jk}=0$ for any $k$. Write $\beta=\mathop{\sum}\limits_{i=1}^rm_i\alpha_i$, and let $L_q(\lambda)$ be a finite-dimensional module with the highest weight vector $v_{\lambda}$. Then we have
\begin{align*}
\bbe_i\Big(\mathop{\sum}\limits_j\lambda(h_{jk})y_jv_{\lambda}\Big)=\mathop{\sum}\limits_j[\bbe_i,y_j]h_{jk}v_{\lambda}=0,
\end{align*}
for all $i\in\mathbb{I}$. So $\mathop{\sum}\limits_j\lambda(h_{jk})y_jv_{\lambda}$ generates a proper submodule of $L_q(\lambda)$, and we get $\mathop{\sum}\limits_j\lambda(h_{jk})y_jv_{\lambda}=0$. The linear map $\mathrm{U}^-_{-\beta}\rightarrow L_q(\lambda)$ given by $y\mapsto yv_{\lambda}$ is bijective if $\lambda$ satisfies the condition of Proposition \ref{3.3}. Hence, $\mathop{\sum}\limits_j\lambda(h_{jk})y_j=0$. Therefore, $h_{jk}=0$ for any $j,k$, and $z_{\beta}=0$. This contradicts the choice of $\beta$ with $z_{\beta}\neq0$. Thus, $z=0$ and $\mathcal{HC}$ is injective.
\end{proof}
\subsection{Description of the image of the $\mathcal{HC}$}
The image of the $\mathcal{HC}$ is much more complicated. We split it into the following three lemmas. Recall that 
the Weyl group $W$ acts naturally on $\mathrm{U}^0$ as $w(\bbk_{\mu})=\bbk_{w\mu}$ for all $w\in W$ and $\mu\in \mathbb{Z}\Phi$. We have $(w\lambda)(w h)=\lambda(h)$ for all $w\in W, \lambda\in \Lambda,$ and $h\in\mathrm{U}^0.$
\begin{lemma} \label{U0}
The restriction of the image of Harish-Chandra homomorphism on the center of quantum superalgebra $\mathrm{U}_q(\mathfrak{g})$ is contained in the $W$-invariant of $\mathrm{U}^0$; i.e., $\mathcal{HC}\big(\mathcal{Z}(\mathrm{U}_q(\mathfrak{g}))\big)\subset(\mathrm{U}^0)^{W}$.
\end{lemma}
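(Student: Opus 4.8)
The plan is to adapt the classical and quantum-group argument, whose essential input is a quantum analogue of a Bernstein--Gelfand--Gelfand (BGG) type fact about composition factors of Verma modules. Recall from the discussion preceding the lemma that any $z\in\mathcal{Z}(\mathrm{U}_q(\mathfrak{g}))$ acts on $\Delta_q(\lambda)$ by the scalar $\chi_\lambda(z)=(\lambda+\rho)(h)$, where $h=\mathcal{HC}(z)\in\mathrm{U}^0$, and that $(w\lambda)(wh')=\lambda(h')$ for all $w\in W$, $\lambda\in\Lambda$, $h'\in\mathrm{U}^0$. Writing $w\cdot\lambda:=w(\lambda+\rho)-\rho$ for the dot-action of $W=W(\Phi_{\bar{0}})$ relative to the super Weyl vector $\rho$, it is enough to show $(w(\lambda+\rho))(h)=(\lambda+\rho)(h)$ for every $w\in W$ and every $\lambda$ in a Zariski-dense subset of $\Lambda$. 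Indeed, writing $h=\sum_\mu a_\mu\bbk_\mu$ (a finite sum, the $\mu$ distinct) one has $(w\nu)(h)=\nu(w^{-1}(h))$ for any weight $\nu$, so the equality above reads $(\lambda+\rho)\bigl(w^{-1}(h)-h\bigr)=0$; since distinct $\bbk_\mu$ are separated by the algebra homomorphisms $\mathrm{U}^0\to k$ coming from a Zariski-dense family of weights, this forces $w^{-1}(h)=h$, and as $w$ runs over $W$ we obtain $h\in(\mathrm{U}^0)^W$.

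To obtain the equality of scalars I would fix a $\Phi_{\bar{0}}^+$-dominant, $W$-regular, typical weight $\lambda\in\Lambda\cap\frac{1}{2}\mathbb{Z}\Phi$; such weights are Zariski-dense in $\Lambda$, being the intersection of a cone with the complement of finitely many hyperplanes. The key point is that for every $w\in W$ the simple module $L_q(w\cdot\lambda)$ is a composition factor of $\Delta_q(\lambda)$ --- equivalently, there is a nonzero $\mathrm{U}_q(\mathfrak{g})$-module homomorphism $\Delta_q(w\cdot\lambda)\to\Delta_q(\lambda)$. Granting this, $z$ acts on $\Delta_q(\lambda)$, hence on each of its subquotients, by $\chi_\lambda(z)$, while on $L_q(w\cdot\lambda)$ it acts by $\chi_{w\cdot\lambda}(z)$; therefore $(w(\lambda+\rho))(h)=\chi_{w\cdot\lambda}(z)=\chi_\lambda(z)=(\lambda+\rho)(h)$, which is what was needed.

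The main obstacle is exactly this BGG-type input. When $s_\beta$ is the reflection in an even root $\beta$ that lies in $\Pi$, the embedding $\Delta_q(s_\beta\cdot\mu)\hookrightarrow\Delta_q(\mu)$ (whenever $\langle\mu,\beta\rangle\in\mathbb{Z}_{\geqslant0}$) is the direct rank-one quantum $\mathfrak{sl}_2$ computation, the singular vector being a quantum divided power of $\bbf_i$, and here $\langle\rho,\beta\rangle=1$ makes the weight shift coincide with the dot-action; chaining such embeddings along a reduced expression for $w$ yields $\Delta_q(w\cdot\lambda)\hookrightarrow\Delta_q(\lambda)$ for $\lambda$ dominant and regular. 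However, for $\mathfrak{g}$ of type~\RNum{2} the reflections in the even roots of $\Pi$ do not generate all of $W$, and for an even root $\beta\notin\Pi$ the singular vector of weight $s_\beta\cdot\mu$ in $\Delta_q(\mu)$ is in general not a power of a single root vector --- it may involve the odd generators --- while the shift remains the one prescribed by the \emph{super} Weyl vector. I would supply the missing homomorphisms, for $\lambda$ in the dense family above, by deducing them from the corresponding classical statement for basic Lie superalgebras via the deformation of Theorem~\ref{repquantum}, together with the character formula for $\Delta_q(\lambda)$ recorded in the proof of Proposition~\ref{3.3}; alternatively one can transcribe Jantzen's Verma-module bookkeeping for quantum groups to the super setting. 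Everything else --- acting by the central scalar on subquotients of $\Delta_q(\lambda)$ and the separation argument of the first paragraph --- is routine.
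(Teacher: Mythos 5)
Your overall skeleton coincides with the paper's: prove the linkage $\chi_\lambda=\chi_{w(\lambda+\rho)-\rho}$ for all $w\in W$ and all $\lambda$ in a separating family, then conclude $w(h)=h$ from non-degeneracy of the pairing on (finite-dimensional) weights against $\mathbb{Z}\Phi$. The separation step is fine. The genuine gap is in how you obtain the linkage. You reduce it to the claim that $L_q(w(\lambda+\rho)-\rho)$ is a composition factor of $\Delta_q(\lambda)$ for every $w\in W$ (which you also declare ``equivalent'' to the existence of a nonzero map $\Delta_q(w(\lambda+\rho)-\rho)\to\Delta_q(\lambda)$ --- these are not interchangeable without a BGG-reciprocity-type argument), and you yourself flag that the rank-one chaining only yields the reflections in the even simple roots of $\Pi$, which in type \RNum{2} generate a proper subgroup of $W$ (the generator $s_\delta$, with $\delta=\sum_i c_i\alpha_i$ and $c_s=2$, is missing). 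The proposed repairs do not close this: Theorem \ref{repquantum} transfers characters of simple modules and hence composition multiplicities of Verma modules, but the classical super-BGG input you would then invoke --- that $[\Delta(\lambda):L(s_\beta(\lambda+\rho)-\rho)]\neq 0$ for \emph{every} even positive root $\beta$, including non-simple ones and, in $B(m,n)$, the roots $2\delta_i$ whose half is odd --- is itself a nontrivial theorem requiring the super Shapovalov determinant and Jantzen filtration, and is neither proved nor precisely cited. As written, the key step is unestablished.

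The paper circumvents all of this with a much softer character argument that you could substitute: only the equality of central characters is needed, not actual singular vectors or Verma embeddings. Writing $\mathrm{ch}\,\Delta_q(\lambda)=\sum_\mu b_{\lambda\mu}\,\mathrm{ch}\,L_q(\mu)$ and inverting the unitriangular system gives $\mathrm{ch}\,L_q(\lambda)=\sum_\mu a_{\lambda\mu}\,\mathrm{ch}\,\Delta_q(\mu)$ with $a_{\lambda\mu}\neq 0\Rightarrow\chi_\lambda=\chi_\mu$, because every composition factor of a Verma module carries that Verma module's central character. For $L_q(\lambda)$ finite-dimensional, $\mathrm{ch}\,L_q(\lambda)$ is $W$-invariant while the denominator $D$ in $\mathrm{ch}\,\Delta_q(\mu)=D^{-1}e^{\mu+\rho}$ satisfies $w(D)=(-1)^{l(w)}D$; comparing coefficients in $D\,\mathrm{ch}\,L_q(\lambda)=\sum_\mu a_{\lambda\mu}e^{\mu+\rho}$ forces $a_{\lambda,\,w(\lambda+\rho)-\rho}=(-1)^{l(w)}a_{\lambda\lambda}\neq 0$, hence $\chi_\lambda=\chi_{w(\lambda+\rho)-\rho}$ for every $w\in W$. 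This requires no identification of which $\mu$ actually index composition factors and works uniformly in types \RNum{1} and \RNum{2}. If you insist on the BGG route, you must construct the embeddings (or composition factors) for the non-simple even reflections; until then your proof is incomplete exactly at its central step.
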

\begin{proof}
The character of the Verma module $\Delta_q(\lambda)$ with the highest weight $\lambda\in\Lambda$ is given by $\mathrm{ch}\Delta_q(\lambda)=\frac{1}{D}e^{\mu+\rho}$ where $D=\mathop{\prod}\limits_{\beta\in\Phi_{\bar{1}}^+}(e^{\beta/2}-e^{-\beta/2}) / \mathop{\prod}\limits_{\alpha\in\Phi_{\bar{0}}^+}(e^{\alpha/2}-e^{-\alpha/2})$ owing to \cite[Theorem 1]{Kac2} and Theorem \ref{repquantum}.

Since the character of a module is equal to the sum of the characters of its composition factors, we have
\begin{align*}
\mathrm{ch}\Delta_q(\lambda)=\mathop{\sum}\limits_{\mu}b_{\lambda\mu}\mathrm{ch}L_q(\lambda)
\end{align*}
where $b_{\lambda\mu}\in\mathbb{Z}_+$ and $b_{\lambda\lambda}=1$. Since $\Delta_q(\lambda)$ is a highest weight module, $b_{\lambda\mu}\neq0\Rightarrow\lambda-\mu\in\mathop{\sum}\limits_{i}\mathbb{Z}_+\alpha_i$ and also $\chi_{\lambda}=\chi_{\mu}$. Hence, we have
\begin{align*}
\mathrm{ch}L_q(\lambda)=\mathop{\sum}\limits_{\mu}a_{\lambda\mu}\mathrm{ch}\Delta_q(\lambda)\quad \text{ and } \quad D\mathrm{ch}L_q(\lambda)=\mathop{\sum}\limits_{\mu}a_{\lambda\mu}e^{\mu+\rho}
\end{align*}
where $a_{\lambda\mu}\in\mathbb{Z}$ with $a_{\lambda\lambda}=1$, and $a_{\lambda\mu}=0 \text{ unless } \lambda-\mu\in\mathop{\sum}\limits_{i}\mathbb{Z}_+\alpha \text{ and } \chi_{\lambda}=\chi_{\mu}$.

Assume for now that $L(\lambda)$ is finite-dimensional. Then $L_q(\lambda)$ is a semisimple $\mathfrak{g}_{\bar{0}}$-module, and $\mathrm{ch}L_q(\lambda)$ is $W$-invariant as a result. On the other hand, $w(D)=(-1)^{l(w)}D$ for all $w\in W$, and hence $D\mathrm{ch}L_q(\lambda)$ can be written as 
\begin{align*}
\mathop{\sum}\limits_{\mu\in X}a_{\lambda\mu}\mathop{\sum}\limits_{w\in W}(-1)^{l(w)}e^{w(\mu+\rho)},
\end{align*}
where $X$ consists of $\Phi_{\bar{0}}^+$-dominant integral weights such that $a_{\lambda\mu}\neq0$. Moreover, $a_{\lambda,w(\lambda+\rho)-\rho}=(-1)^{l(w)}a_{\lambda\lambda}=(-1)^{l(w)}$. Hence, we have $\chi_{\lambda}=\chi_{w(\lambda+\rho)-\rho}$ for all $w\in W,\lambda\in \Lambda_{f.d.}$, where $\Lambda_{f.d.}=\{\lambda\in\Lambda| \mathrm{dim}L_q(\lambda)<\infty \}$.

%

For $z\in \mathcal{Z}(\mathrm{U}_q(\mathfrak{g}))$, we set $h=\mathcal{HC}(z)$. Assuming that $\lambda\in\Lambda$ and $L_q(\lambda)$ is finite-dimensional, we get $(\lambda+\rho)(h)=\chi_{\lambda}(z)=\chi_{w(\lambda+\rho)-\rho}(z)=(w(\lambda+\rho))(h)=(\lambda+\rho)(w h)$. Hence $\lambda(w h-h)=0$ for all $w\in W$. Fix $w$ and write $w h-h=\mathop{\sum}\limits_{\mu}a_{\mu}\bbk_{\mu}$. Then $\lambda\big(\mathop{\sum}\limits_{\mu}a_{\mu}\bbk_{\mu}\big)=\mathop{\sum}\limits_{\mu}a_{\mu}q^{(\lambda,\mu)}=0$  for all $\lambda\in \Lambda_{f.d.}$. Thus, $w h-h=0$ and $h\in(\mathrm{U}^0)^{W}$ because the bilinear form on $\Lambda_{f.d.}\times \mathbb{Z}\Phi$ is non-degenerate in the second component.
\end{proof}

Set
\begin{equation}
(\mathrm{U}^0_{\mathrm{ev}})^{W}=\Bigg\{\mathop{\sum}\limits_{\mu} a_{\mu}\bbk_{\mu}\Bigg|\mu\in 2\Lambda\cap \mathbb{Z}\Phi~\text{and}~a_{\mu}=a_{w
 \mu},~\forall w\in W\Bigg\}.
\end{equation}
\begin{lemma}\label{Uev}
The Harish-Chandra homomorphism $\mathcal{HC}$ maps $\mathcal{Z}(\mathrm{U}_q(\mathfrak{g}))$ to $(\mathrm{U}^0_{\mathrm{ev}})^{W}$.
\end{lemma}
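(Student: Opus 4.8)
The plan is to build on Lemma \ref{U0}. For $z\in\mathcal{Z}(\mathrm{U}_q(\mathfrak{g}))$ write $h:=\mathcal{HC}(z)$; by Lemma \ref{U0} we already know $h=\sum_{\mu\in\mathbb{Z}\Phi}a_{\mu}\bbk_{\mu}$ with $a_{w\mu}=a_{\mu}$ for all $w\in W$, so all that is left to prove is the ``even'' constraint: $a_{\mu}=0$ whenever $\mu\notin 2\Lambda$ (equivalently, whenever there is some $\alpha\in\Phi_{\bar 0}$ with $\langle\mu,\alpha\rangle$ odd; this automatically amounts to $\mathrm{supp}(h)\subseteq 2\Lambda\cap\mathbb{Z}\Phi$ since $\mathrm{U}^0$ is the group ring of $\mathbb{Z}\Phi$).

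The key tool will be a family of automorphisms of $\mathrm{U}_q(\mathfrak{g})$ attached to sign characters. For each group homomorphism $\sigma\colon\mathbb{Z}\Phi\to\{\pm1\}$ there is a unique algebra automorphism $\phi_{\sigma}$ of $\mathrm{U}_q(\mathfrak{g})$ with $\phi_{\sigma}(\bbk_{\mu})=\sigma(\mu)\bbk_{\mu}$, $\phi_{\sigma}(\bbe_i)=\bbe_i$ and $\phi_{\sigma}(\bbf_i)=\sigma(\alpha_i)\bbf_i$; one checks it respects the defining relations of Definition \ref{quantumsupergroups}, the only delicate one being $\bbe_i\bbf_i-(-1)^{|\bbe_i||\bbf_i|}\bbf_i\bbe_i=(\bbk_i-\bbk_i^{-1})/(q_i-q_i^{-1})$, where both sides acquire the common factor $\sigma(\alpha_i)$ (using $\sigma(\alpha_i)^2=1$), while each (higher order) Serre relation involves only the $\bbe$'s or only the $\bbf$'s and is rescaled by a single power of $\sigma$. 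Since $\phi_{\sigma}$ is homogeneous for the $\mathbb{Z}\Phi$-grading and preserves each of $\mathrm{U}^{+},\mathrm{U}^{0},\mathrm{U}^{-}$, it preserves the decomposition $\mathrm{U}_0=\mathrm{U}^0\oplus\bigoplus_{\nu>0}\mathrm{U}^-_{-\nu}\mathrm{U}^0\mathrm{U}^+_{\nu}$, so $\pi\circ\phi_{\sigma}=\phi_{\sigma}\circ\pi$ on $\mathrm{U}_0$; as $\phi_{\sigma}$ also commutes with $\gamma_{-\rho}$ on $\mathrm{U}^0$ and carries the center into itself, $\mathcal{HC}\circ\phi_{\sigma}=\phi_{\sigma}\circ\mathcal{HC}$, and in particular $\phi_{\sigma}(h)=\mathcal{HC}(\phi_{\sigma}(z))=\sum_{\mu}a_{\mu}\sigma(\mu)\bbk_{\mu}$.

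Next I would compare how $z$ and $\phi_{\sigma}(z)$ act on the finite-dimensional type {\bf 1} modules. If $L_q(\lambda)$ is finite dimensional, then $z$ acts on the $\phi_{\sigma}$-twist of $L_q(\lambda)$ by the scalar $\chi_{\lambda}(\phi_{\sigma}(z))=(\lambda+\rho)(\phi_{\sigma}(h))=\sum_{\mu}a_{\mu}\sigma(\mu)q^{(\lambda+\rho,\mu)}$. The heart of the argument is to show that when $\sigma$ is trivial on the sublattice $2\Lambda\cap\mathbb{Z}\Phi$, the $\phi_{\sigma}$-twist of $L_q(\lambda)$ carries the \emph{same} central character as $L_q(\lambda)$ itself, so that $\sum_{\mu}a_{\mu}\big(\sigma(\mu)-1\big)q^{(\lambda+\rho,\mu)}=0$ for every such $\lambda$; since the form $(\text{-},\text{-})$ is non-degenerate in the second variable on $\Lambda_{f.d.}\times\mathbb{Z}\Phi$ (exactly the density input used at the end of the proof of Lemma \ref{U0}), this forces $a_{\mu}(\sigma(\mu)-1)=0$ for all $\mu$. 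Letting $\sigma$ range over all homomorphisms $\mathbb{Z}\Phi\to\{\pm1\}$ trivial on $2\Lambda\cap\mathbb{Z}\Phi$, whose common kernel is precisely $2\Lambda\cap\mathbb{Z}\Phi$, we conclude $a_{\mu}=0$ for $\mu\notin 2\Lambda$, i.e. $h\in(\mathrm{U}^0_{\mathrm{ev}})^{W}$.

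I expect the main obstacle to be this ``heart of the argument'': pinning down which sign twists leave the action on \emph{all} finite-dimensional type {\bf 1} modules unchanged, which forces one to do the bookkeeping between the root lattice $\mathbb{Z}\Phi$ (on which $\mathrm{U}^0$ is supported) and the weight lattice $\Lambda\cap\tfrac12\mathbb{Z}\Phi$ (in which the weights of type {\bf 1} modules live). This is the super-analogue of the corresponding subtlety for quantum groups, and I would model it on \cite[Section 6.6]{Jan}. As a consistency check I would also note that for any finite-dimensional type {\bf 1} module $M$ the central element $z_M$ of Lemma \ref{zlambda} has $\pi(z_M)$ supported on $\{-2\mu\mid\mu\in\mathrm{wt}(M)\}$, read off by pairing $z_M$ against the $\bbk_{\eta}$ via \eqref{comp}; since all weights of $M$ lie in $\Lambda\cap\tfrac12\mathbb{Z}\Phi$, this support lies in $2\Lambda\cap\mathbb{Z}\Phi$, confirming that $(\mathrm{U}^0_{\mathrm{ev}})^{W}$ is the expected target (and this computation is exactly what Section \ref{Centers} will build on).
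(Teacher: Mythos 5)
You have the right tool: the sign-twist automorphisms $\tilde\sigma$ (your $\phi_\sigma$) attached to homomorphisms $\sigma\colon\mathbb{Z}\Phi\to\{\pm1\}$, the fact that they preserve the center, and the commutation $\mathcal{HC}\circ\tilde\sigma=\tilde\sigma\circ\mathcal{HC}$ are exactly the ingredients of the paper's proof. But the step you yourself flag as ``the heart of the argument'' is a genuine gap, and as set up it is circular. You want to show that if $\sigma$ is trivial on $2\Lambda\cap\mathbb{Z}\Phi$ then the $\tilde\sigma$-twist of every finite-dimensional $L_q(\lambda)$ has the same central character, i.e.\ $\sum_\mu a_\mu\sigma(\mu)q^{(\lambda+\rho,\mu)}=\sum_\mu a_\mu q^{(\lambda+\rho,\mu)}$. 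The only evident reason this holds is that $\mathrm{Supp}(h)\subseteq 2\Lambda\cap\mathbb{Z}\Phi$ (so that $\sigma(\mu)=1$ on the support) --- which is precisely the conclusion of the lemma. The twisted module is not of type $\mathbf{1}$, and nothing established earlier in the paper identifies its central character with that of $L_q(\lambda)$; you would need an independent argument, and none is offered.

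The paper closes the argument without any module theory beyond Lemma \ref{U0}: since $\tilde\sigma(z)$ is again central, Lemma \ref{U0} applies to it as well, so $\mathcal{HC}(\tilde\sigma(z))=\sum_\mu a_\mu\sigma(\mu)\bbk_\mu$ is $W$-invariant. Comparing its coefficients with those of the $W$-invariant element $h=\sum_\mu a_\mu\bbk_\mu$ gives $a_\mu\sigma(\mu)=a_{w\mu}\sigma(w\mu)=a_\mu\sigma(w\mu)$, hence $\sigma(\mu-s_\alpha\mu)=1$ whenever $a_\mu\neq0$. Since $\mu-s_\alpha\mu=\langle\mu,\alpha\rangle\alpha$, choosing for each even root $\alpha$ a character with $\sigma(\alpha)=-1$ yields $(-1)^{\langle\mu,\alpha\rangle}=1$, i.e.\ $\langle\mu,\alpha\rangle\in2\mathbb{Z}$ for all $\alpha\in\Phi_{\bar 0}$, which is exactly $\mu\in2\Lambda\cap\mathbb{Z}\Phi$. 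I recommend replacing your central-character comparison with this purely formal use of the $W$-invariance already proved in Lemma \ref{U0}; your concluding density argument and the consistency check via $z_M$ are fine but do not repair the missing step.
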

\begin{proof}
Take an arbitrary $z\in \mathcal{Z}(\mathrm{U}_q(\mathfrak{g}))$, we can write $\mathcal{HC}(z)=\mathop{\sum}\limits_{\mu}a_{\mu}\bbk_{\mu}$ with $a_{w\mu}=a_{\mu}$ for any $w\in{W}$.  We only need to prove $\langle\mu,\alpha\rangle\in 2\mathbb{Z}$ for all $\mu\in\mathbb{Z}\Phi$ with $a_{\mu}\neq 0$,  $\alpha\in\Phi_{\bar{0}}$.

For each group homomorphism $\sigma\colon\mathbb{Z}\Phi\rightarrow\{\pm 1\}$,  we can define an automorphism $\tilde{\sigma}$ of $\mathrm{U}_q(\mathfrak{g})$ by
\begin{align*}
\tilde{\sigma}(\bbk_{\mu})=\sigma(\mu)\bbk_{\mu},\quad \tilde{\sigma}(\bbe_{i})=\bbe_{i},\quad \tilde{\sigma}(\bbf_{i})=\sigma(\alpha_i)\bbf_{i}.
\end{align*}
Obviously, $\tilde{\sigma}$ maps the center $\mathcal{Z}(\mathrm{U}_q(\mathfrak{g}))$ to itself. One can check that $\mathcal{HC}=\gamma_{-\rho}\circ\pi$ commutes with $\tilde{\sigma}$. We already have $\mathcal{HC}(\tilde{\sigma}(z))=\tilde{\sigma}\big(\mathop{\sum}\limits_{\mu}a_{\mu}\bbk_{\mu}\big)=
\mathop{\sum}\limits_{\mu}a_{\mu}\sigma(\mu)\bbk_{\mu}$. Since $\tilde{\sigma}(z)$ is central, the sum is in $(\mathrm{U}^0)^{W}$; so we have $a_{\mu}\sigma(\mu)=a_{w\mu}\sigma(w\mu)=a_{\mu}\sigma(w\mu)$ for all $w \in W$. This means: if $a_{\mu}\neq0$, then $\sigma(\mu)=\sigma(w\mu)$ for all $w\in W$.  Thus, $\sigma(\mu-s_{\alpha}\mu)=1$ for all $\alpha\in\Phi_{\bar{0}}^+, \mu\in \mathbb{Z}\Phi$. For each $\alpha$, we can choose $\sigma$ such that $\sigma(\alpha)=-1$. Therefore, $(-1)^{\langle\mu,\alpha\rangle}=1$ and $\langle\mu,\alpha\rangle\in2\mathbb{Z}$.
\end{proof}
For $v\in\Lambda$ and $\alpha\in\Phi_{\mathrm{iso}}$,  we set $A^{\alpha}_{\nu}=\{\nu+n\alpha|n\in\mathbb{Z}\}$. Clearly, $\Lambda=\mathop{\bigcup}\limits_{\nu\in \Lambda} A^{\alpha}_{\nu}$.  Let 
\begin{equation}\label{Wsup}
(\mathrm{U}^0_{\mathrm{ev}})^{W}_{\mathrm{sup}}=\Bigg\{\mathop{\sum}\limits_{\mu} a_{\mu}\bbk_{\mu}\in(\mathrm{U}^0_{\mathrm{ev}})^{W}\Bigg| \mathop{\sum}\limits_{\mu \in A^{\alpha}_{\nu}}a_{\mu}=0,~\forall\alpha\in\Phi_{\mathrm{iso}} \text{ with } (\nu,\alpha)\neq0 \Bigg\}.
\end{equation}
\begin{lemma}\label{Usup}
The Harish-Chandra homomorphism $\mathcal{HC}$ maps $\mathcal{Z}(\mathrm{U}_q(\mathfrak{g}))$ to $(\mathrm{U}^0_{\mathrm{ev}})^{W}_{\mathrm{sup}}$.
\end{lemma}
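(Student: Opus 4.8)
The plan is to bootstrap from Lemma~\ref{Uev}. Fix $z\in\mathcal{Z}(\mathrm{U}_q(\mathfrak{g}))$ and write $h=\mathcal{HC}(z)=\sum_{\mu}a_{\mu}\bbk_{\mu}$; by Lemma~\ref{Uev} this is a \emph{finite} sum supported on $2\Lambda\cap\mathbb{Z}\Phi$ with $a_{w\mu}=a_{\mu}$ for all $w\in W$. Recall from the computation preceding Lemma~\ref{injective} that $\chi_{\lambda}(z)=(\lambda+\rho)(h)$ for every $\lambda\in\Lambda$, where $\chi_{\lambda}$ is the central character through which $z$ acts on $\Delta_q(\lambda)$. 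The only new ingredient I would invoke is Lemma~\ref{embeding}: when $\Phi_{\mathrm{iso}}\neq\varnothing$ the distinguished odd simple root $\alpha_s$ is isotropic, and for every $\lambda\in\Lambda$ with $(\lambda,\alpha_s)=0$ the homomorphism $\varphi\colon\Delta_q(\lambda-\alpha_s)\to\Delta_q(\lambda)$ of Lemma~\ref{embeding} is nonzero (it sends $v_{\lambda-\alpha_s}$ to $\bbf_sv_{\lambda}\neq0$), hence $\chi_{\lambda}=\chi_{\lambda-\alpha_s}$. Evaluating both central characters on $h$ yields the functional identity
\[
\sum_{\mu}a_{\mu}\,q^{(\lambda+\rho,\mu)}\bigl(1-q^{-(\alpha_s,\mu)}\bigr)=0\qquad\text{for all }\lambda\in\Lambda\text{ with }(\lambda,\alpha_s)=0 .
\]
If $\Phi_{\mathrm{iso}}=\varnothing$ there is nothing to prove, so assume otherwise.

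Next I would extract the coset conditions for $\alpha=\alpha_s$ by grouping the finitely many surviving terms according to the cosets $C=\nu+\mathbb{Z}\alpha_s$ of $\mathbb{Z}\alpha_s$ in $\mathbb{Z}\Phi$. Two observations, both crucially using that $\alpha_s$ is isotropic, make the grouping collapse: since $(\alpha_s,\alpha_s)=0$, the factor $1-q^{-(\alpha_s,\mu)}$ depends only on $C$; and since $(\rho,\alpha_s)=0$ (recall $(\rho,\alpha_i)$ is proportional to $(\alpha_i,\alpha_i)$ for each simple $\alpha_i$) together with $(\lambda,\alpha_s)=0$, the exponent $(\lambda+\rho,\mu)$ also depends only on $C$ for $\lambda$ in the hyperplane. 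Choosing a representative $\nu_C$ of each $C$, the identity becomes
\[
\sum_{C}q^{(\lambda+\rho,\nu_C)}\bigl(1-q^{-(\alpha_s,\nu_C)}\bigr)\Bigl(\sum_{\mu\in C}a_{\mu}\Bigr)=0\qquad\text{for all such }\lambda .
\]
Distinct cosets $C$ give distinct characters $\lambda\mapsto q^{(\lambda,\nu_C)}$ of the lattice $L_0:=\{\lambda\in\Lambda:(\lambda,\alpha_s)=0\}$: the form descends to a pairing of $L_0$ with $\mathbb{Z}\Phi/(\mathbb{Q}\alpha_s\cap\mathbb{Z}\Phi)$ that is nondegenerate in the second variable, and $\mathbb{Q}\alpha_s\cap\mathbb{Z}\Phi=\mathbb{Z}\alpha_s$ because $\alpha_s$ is a simple, hence primitive, root. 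By linear independence of characters each coefficient must vanish, so $q^{(\rho,\nu_C)}\bigl(1-q^{-(\alpha_s,\nu_C)}\bigr)\sum_{\mu\in C}a_{\mu}=0$; as $q$ is transcendental, $1-q^{-(\alpha_s,\nu_C)}\neq0$ precisely when $(\alpha_s,\nu_C)\neq0$. Hence $\sum_{\mu\in A^{\alpha_s}_{\nu}}a_{\mu}=0$ whenever $(\nu,\alpha_s)\neq0$, which is exactly the condition defining $(\mathrm{U}^0_{\mathrm{ev}})^W_{\mathrm{sup}}$ for the root $\alpha_s$.

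Finally I would promote this to an arbitrary isotropic root $\alpha$. By the classification of the root systems of basic Lie superalgebras (Appendix~\ref{appendix}), $\Phi_{\mathrm{iso}}$ is a single $W$-orbit up to sign, i.e.\ every isotropic root is $W$-conjugate to $\alpha_s$ or to $-\alpha_s$; since $h$ is $W$-invariant, and since $A^{-\alpha}_{\nu}=A^{\alpha}_{\nu}$ with $(\nu,-\alpha)=-(\nu,\alpha)$, the condition for $\alpha$ follows from the one proved for $\alpha_s$. This gives $\mathcal{HC}(z)\in(\mathrm{U}^0_{\mathrm{ev}})^W_{\mathrm{sup}}$; the same argument applies to $A(n,n)$ with $n>1$, and $A(1,1)$ is excluded throughout. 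The step I expect to be the crux is the collapse in the second paragraph: it is essential that $\alpha_s$ be isotropic so that $(\rho,\alpha_s)=0$ and the $\mathbb{Z}\alpha_s$-cosets genuinely decouple; otherwise one recovers only the twisted sums $\sum_{n}a_{\nu+n\alpha_s}\,q^{n(\rho,\alpha_s)}$, which carry strictly less information. A secondary point requiring care is the reduction in the last paragraph, which rests on the structure of $\Phi_{\mathrm{iso}}$ as a single $W$-orbit up to sign.
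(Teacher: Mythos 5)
Your proof is correct, and its core is the same as the paper's: compare the central characters $\chi_{\lambda}$ and $\chi_{\lambda-\alpha_s}$ via the embedding of Lemma \ref{embeding}, group the finitely many terms of $\mathcal{HC}(z)$ into $\mathbb{Z}\alpha$-cosets using that both $q$-exponents depend only on the coset when $\alpha$ is isotropic and $\lambda$ lies in the hyperplane $(\lambda+\rho,\alpha)=0$, and then separate the cosets by varying $\lambda$ (your appeal to linear independence of characters of the lattice $L_0$ does the same job as the paper's Vandermonde-type argument with the multiples $n_{\nu}(\lambda+\rho)$). The one genuine organizational difference is how the general isotropic root $\alpha$ is reached. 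The paper first upgrades the character identity itself to all isotropic $\alpha$, writing $\chi_{\lambda}=\chi_{w(\lambda+\rho)-\rho}=\chi_{w(\lambda+\rho)-\alpha_s-\rho}=\chi_{\lambda-\alpha}$ for $w(\alpha)=\alpha_s$ — this re-uses the $W$-linkage of central characters established inside the proof of Lemma \ref{U0} — and then runs the coset argument separately for each $\alpha$. You instead run the coset argument once, for $\alpha_s$ only, and transport the resulting combinatorial condition $\sum_{\mu\in A^{\alpha_s}_{\nu}}a_{\mu}=0$ to $\alpha=\pm w(\alpha_s)$ using $W$-invariance of the coefficients $a_{\mu}$ (already available from Lemma \ref{U0}/\ref{Uev}) together with $A^{\alpha}_{\nu}=w\bigl(A^{\alpha_s}_{w^{-1}\nu}\bigr)$ and $(w^{-1}\nu,\alpha_s)=(\nu,\alpha)$. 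Both routes rest on the fact that $\Phi_{\mathrm{iso}}$ is a single $W$-orbit up to sign; you state this more carefully than the paper does (the paper's literal claim that $w(\alpha)=\alpha_s$ for every isotropic $\alpha$ fails for $\alpha=-\alpha_s$, which is harmless since $A^{-\alpha}_{\nu}=A^{\alpha}_{\nu}$, exactly as you observe). Your version buys a small economy — the separation argument is executed only once and the linkage principle for central characters under $W$ is not needed again — at the cost of an explicit check of the orbit structure of $\Phi_{\mathrm{iso}}$, which in any case both proofs require.
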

\begin{proof}
We claim that if  $\alpha\in\Phi_{\mathrm{iso}}$ and $(\lambda+\rho, \alpha)=0, $ then $\chi_{\lambda}=\chi_{\lambda-k\alpha}$ for any $k\in\mathbb{Z}$.
Indeed, if $\alpha=\alpha_s$  and $(\lambda,\alpha_s)=0$, then we get a non-trivial homomorphism  $\varphi\colon\Delta_q(\lambda-\alpha_s)\rightarrow\Delta_q(\lambda)$ according to Lemma \ref{embeding}. In this way, $z\in \mathcal{Z}(\mathrm{U}_q(\mathfrak{g}))$ acts by the same constant on both modules; i.e., $\chi_{\lambda}(z)=(\lambda+\rho)(h)=(\lambda-\alpha_{s}+\rho)(h)=\chi_{\lambda-\alpha_{s}}(z)$ where $h=\mathcal{HC}(z)=\gamma_{-\rho}\circ\pi(z)$. Thus, $\chi_{\lambda}=\chi_{\lambda-\alpha_{s}}$.

For any $\alpha\in\Phi_{\mathrm{iso}}$, if $(\lambda+\rho,\alpha)=0$, then there exists $w\in W$ such that $w(\alpha)=\alpha_s$. Based on the $W$-invariance of $(\cdot, \cdot)$, we have $\big(w(\lambda+\rho),w(\alpha)\big)=(\lambda+\rho,\alpha)=0$, so 
$$\chi_{\lambda}=\chi_{w(\lambda+\rho)-\rho}=\chi_{w(\lambda+\rho)-\alpha-\rho}=\chi_{\lambda-\alpha}.$$

This implies $\chi_{\lambda}=\chi_{\lambda-\alpha}$, so we conclude that $\chi_{\lambda}=\chi_{\lambda-k\alpha}$ for all $k\in\mathbb{Z}$. 

Now suppose $h=\gamma_{-\rho}\circ\pi(z)=\mathop{\sum}\limits_{\mu} a_{\mu}\bbk_{\mu}$ for some $z\in \mathcal{Z}(\mathrm{U}_q(\mathfrak{g}))$ and $\alpha\in\Phi_{\mathrm{iso}}$, by $\chi_{\lambda}(z)=(\lambda+\rho)\Big(\mathop{\sum}\limits_{\mu} a_{\mu}\bbk_{\mu}\Big)$ and $\chi_{\lambda}=\chi_{\lambda-\alpha}$ for all $(\lambda+\rho,\alpha)=0$. We know 
\begin{equation}
(\lambda+\rho+\alpha)\bigg(\mathop{\sum}\limits_{\mu} a_{\mu}\bbk_{\mu}\bigg)=(\lambda+\rho)\bigg(\mathop{\sum}\limits_{\mu } a_{\mu}\bbk_{\mu}\bigg),
\end{equation} for all $(\lambda+\rho,\alpha)=0$, hence\begin{equation}\label{rewrite}
\mathop{\sum}\limits_{\mu } a_{\mu}q^{(\lambda+\rho,\mu)}\left(q^{(\mu,\alpha)}-1\right)=0.
\end{equation}
Notice that $(\lambda+\rho,\nu)=(\lambda+\rho,\nu')$ and $(\mu,\alpha)=(\nu',\alpha)$ if $A_{\nu}^{\alpha}=A_{\nu'}^{\alpha}$. There is a finite subset $X\subset \Lambda$ such that $A_{\nu}\neq A_{\nu'}$ for all $\nu,\nu'\in X$ and $\mathrm{Supp}(h)\subset\mathop{\bigcup}\limits_{\nu\in X}A_{\nu}^{\alpha}$, where $\mathrm{Supp}(h)$ is the set of $\mu\in 2\Lambda\cap \mathbb{Z}\Phi$ for which $a_\mu$ is nonzero. This means we can rewrite Equation (\ref{rewrite}) as
\begin{align*}
\mathop{\sum}\limits_{\nu\in X} \bigg(\mathop{\sum}\limits_{\mu\in A_{\nu}^{\alpha}} a_{\mu}\bigg)\left(q^{(\nu,\alpha)}-1\right) q^{(\lambda+\rho,\nu)}=0.
\end{align*}

Since $(\alpha,\alpha)=0$ and the bilinear form on $\Lambda\times \mathbb{Z}\Phi$ is non-degenerate, there is a $\lambda+\rho\in\Lambda$ such that $(\lambda+\rho,\alpha)=0$  and $(\lambda+\rho,\nu)\neq(\lambda+\rho,\nu')$ for all $\nu\neq \nu'$ with $\nu,\nu'\in X$. Let $\{n_{\nu}\}_{\nu\in X}$ be positive integers that are distinct. We get
\begin{align*}
\mathop{\sum}\limits_{\nu'\in X} \bigg(\mathop{\sum}\limits_{\mu\in A_{\nu'}^{\alpha}} a_{\mu}\bigg)\left(q^{(\nu',\alpha)}-1\right) q^{\left(n_{\nu}(\lambda+\rho),\nu'\right)}=0,
\end{align*}
for all $\nu\in X$ and the square matrix $\left(q^{(n_{\nu}(\lambda+\rho),\nu')}\right)_{\nu\nu'}$ is invertible. Therefore,
\begin{equation}
\bigg(\mathop{\sum}\limits_{\mu \in A^{\alpha}_{\nu}}a_{\mu}\bigg)\left(q^{(\nu,\alpha)}-1\right)=0,
\end{equation}
for all $\nu\in X$, and $\mathop{\sum}\limits_{\mu \in  A^{\alpha}_{\nu}}a_{\mu}=0$ if $(\nu,\alpha)\neq0$.
\end{proof}
\begin{example}
We give some explicit elements in $(\mathrm{U}^0_{\mathrm{ev}})^{W}_{\mathrm{sup}}$ when $\mathfrak{g}$ is of small rank.
\begin{enumerate}
\item Let $\mathfrak{g}=A(1,0)$. In such a case, $\Phi_{\bar{1}}^+=\{\alpha_2,\alpha_1+\alpha_2\}$ and $2\Lambda\cap \mathbb{Z}\Phi=\mathbb{Z}\alpha_1+2\mathbb{Z}\alpha_2$.  If $\lambda=k_1\alpha_1+2k_2\alpha_2$ is a dominant weight,  then we have   $k_1\geqslant k_2$ and $k_1,k_2\in\mathbb{Z}$. Furthermore, $W\lambda=\{\lambda,\lambda-2(k_1-k_2)\alpha_1\}$. Thus $k_{\lambda}=\bbk_{\lambda}-\bbk_{\lambda-2\alpha_2}-\bbk_{\lambda-2\alpha_1-2\alpha_2}+\bbk_{\lambda-2\alpha_1-4\alpha_2}+\bbk_{\lambda-2(k_1-k_2)\alpha_1}-\bbk_{\lambda-2(k_1-k_2)\alpha_1-2\alpha_2}-\bbk_{\lambda-2(k_1-k_2)\alpha_1-2\alpha_1-2\alpha_2}+\bbk_{\lambda-2(k_1-k_2)\alpha_1-2\alpha_1-4\alpha_2}\in(\mathrm{U}^0_{\mathrm{ev}})^{W}_{\mathrm{sup}} $.
\item  Let $\mathfrak{g}=C(2)$. As a result, $\Phi_{\bar{1}}^+=\{\alpha_1,\alpha_1+\alpha_2\}$ and $2\Lambda\cap \mathbb{Z}\Phi=2\mathbb{Z}\alpha_1+\mathbb{Z}\alpha_2$. If $\lambda=2k_1\alpha_1+k_2\alpha_2$ is a dominant weight, then we have $k_2\geqslant k_1$ and $k_1,k_2\in\mathbb{Z}$. Furthermore,  $W\lambda=\{\lambda,\lambda-2(k_2-k_1)\alpha_2\}$. Thus $k_{\lambda}=\bbk_{\lambda}-\bbk_{\lambda-2\alpha_1}-\bbk_{\lambda-2\alpha_1-2\alpha_2}+\bbk_{\lambda-4\alpha_1-2\alpha_2}+\bbk_{\lambda-2(k_2-k_1)\alpha_2}-\bbk_{\lambda-2(k_2-k_1)\alpha_2-2\alpha_1}-\bbk_{\lambda-2(k_2-k_1)\alpha_2-2\alpha_1-2\alpha_2}+\bbk_{\lambda-2(k_2-k_1)\alpha_2-4\alpha_1-2\alpha_2}\in(\mathrm{U}^0_{\mathrm{ev}})^{W}_{\mathrm{sup}}$.
\item Let $\mathfrak{g}=B(1,1)$. In this case, the positive isotropic roots of $\mathfrak{g}$ are $\{\alpha_1, \alpha_1+2\alpha_2\}$ and $2\Lambda\cap \mathbb{Z}\Phi=2\mathbb{Z}\alpha_1+\mathbb{Z}\alpha_2$. If $\lambda=\lambda_1\delta_1+\mu_1\varepsilon_1\in2\Lambda\cap Q$ is a dominant weight,  then  we have $\lambda_1\neq0$,  $\lambda_1-2,2\mu_1\in 2\mathbb{Z}_+$. Furthermore, $W\lambda=\{ \pm\lambda_1\delta_1\pm\mu_1\varepsilon_1\}$. Thus  $k_{\lambda}=\mathop{\sum}\limits_{w\in W}w(\bbk_{\lambda}-\bbk_{\lambda-2\alpha_1}-\bbk_{\lambda-2\alpha_1-4\alpha_2}+\bbk_{\lambda-4\alpha_1-4\alpha_2})\in(\mathrm{U}^0_{\mathrm{ev}})^{W}_{\mathrm{sup}}$.
\end{enumerate}
\end{example}

\subsection{Proof of Theorem A}

In order to prove the surjectivity of $\mathcal{HC}$, we need to investigate the Grothendieck rings $K(\mathfrak{g})$ of finite-dimensional representations of the basic classical Lie superalgebras $\mathfrak{g}$. In the  following proposition, we identify the algebra $(\mathrm{U}^0_{\mathrm{ev}})^{W}_{\mathrm{sup}}$ with $J_{\mathrm{ev}}(\mathfrak{g})$, which plays a crucial role on the surjectivity of $\mathcal{HC}$.
\begin{proposition}\label{J(U)}
$(\mathrm{U}^0_{\mathrm{ev}})^{W}_{\mathrm{sup}}=J_{\mathrm{ev}}(\mathfrak{g})$.
\end{proposition}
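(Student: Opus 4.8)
The plan is to reduce Proposition \ref{J(U)} to an elementary statement about the group algebra $\mathrm{U}^0=k[\mathbb{Z}\Phi]$. Both $(\mathrm{U}^0_{\mathrm{ev}})^{W}_{\mathrm{sup}}$ and $J_{\mathrm{ev}}(\mathfrak{g})$ consist of the $W$-invariant elements $u=\sum_{\mu\in 2\Lambda\cap\mathbb{Z}\Phi}a_\mu\bbk_\mu$ subject to one further condition for each isotropic root $\alpha$: respectively, $\sum_{\mu\in A^\alpha_\nu}a_\mu=0$ whenever $(\nu,\alpha)\neq0$, and $D_\alpha(u)\in(\bbk_\alpha^2-1)$. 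So it suffices to fix $\alpha\in\Phi_{\mathrm{iso}}$ and a $W$-invariant $u$ as above and prove that these two conditions are equivalent. The bridge is the canonical surjection $\mathrm{U}^0=k[\mathbb{Z}\Phi]\twoheadrightarrow k[\mathbb{Z}\Phi/\mathbb{Z}(2\alpha)]$, whose kernel is the principal ideal $(\bbk_{2\alpha}-1)=(\bbk_\alpha^2-1)$.

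First I would analyze $D_\alpha$ modulo $(\bbk_\alpha^2-1)$. Since $(\alpha,\alpha)=0$, $D_\alpha(\bbk_{\mu+n\alpha})=(\mu+n\alpha,\alpha)\bbk_{\mu+n\alpha}=(\mu,\alpha)\bbk_{\mu+n\alpha}$, so $D_\alpha$ acts by the single scalar $(\mu,\alpha)$ on the span of every coset $\mu+\mathbb{Z}\alpha$, a fortiori on the span of every coset of $\mathbb{Z}(2\alpha)$. Writing $e_C$ for the basis of $k[\mathbb{Z}\Phi/\mathbb{Z}(2\alpha)]$ indexed by cosets $C$, and $(C,\alpha)$ for the common value of $(\mu,\alpha)$ on $C$, the image of $D_\alpha(u)=\sum_\mu a_\mu(\mu,\alpha)\bbk_\mu$ is $\sum_C (C,\alpha)\bigl(\sum_{\mu\in C}a_\mu\bigr)e_C$. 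Hence $D_\alpha(u)\in(\bbk_\alpha^2-1)$ if and only if $\sum_{\mu\in C}a_\mu=0$ for every coset $C$ of $\mathbb{Z}(2\alpha)$ in $\mathbb{Z}\Phi$ with $(C,\alpha)\neq0$.

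Next I would match the cosets of $\mathbb{Z}(2\alpha)$ with the sets $A^\alpha_\nu$, using that $a_\mu$ is supported in $2\Lambda\cap\mathbb{Z}\Phi$. Here I would invoke two facts about an isotropic root $\alpha$, both immediate from the explicit distinguished root data of Appendix \ref{appendix}: $\alpha\in\Lambda$ (so $\mathbb{Z}(2\alpha)\subseteq 2\Lambda\cap\mathbb{Z}\Phi$, i.e.\ $2\Lambda\cap\mathbb{Z}\Phi$ is a union of $\mathbb{Z}(2\alpha)$-cosets), and $\tfrac12\alpha\notin\Lambda$ (equivalently, some even root $\beta$ has $\langle\alpha,\beta\rangle$ odd). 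Each $\mathbb{Z}\alpha$-coset $A^\alpha_\nu$ is the disjoint union of two $\mathbb{Z}(2\alpha)$-cosets, and the second fact forbids both from meeting $2\Lambda$; hence $A^\alpha_\nu\cap(2\Lambda\cap\mathbb{Z}\Phi)$ is either empty or exactly one coset $C$ of $\mathbb{Z}(2\alpha)$, necessarily contained in $2\Lambda\cap\mathbb{Z}\Phi$, and $(\nu,\alpha)=(C,\alpha)$. Conversely every coset $C\subseteq 2\Lambda\cap\mathbb{Z}\Phi$ of $\mathbb{Z}(2\alpha)$ arises, by choosing $\nu\in C$. Since the support restriction gives $\sum_{\mu\in A^\alpha_\nu}a_\mu=\sum_{\mu\in A^\alpha_\nu\cap 2\Lambda\cap\mathbb{Z}\Phi}a_\mu$, which equals $\sum_{\mu\in C}a_\mu$ (or $0$), the two families of linear equations — over all $\nu$ with $(\nu,\alpha)\neq0$, resp.\ over all cosets $C$ with $(C,\alpha)\neq0$ — are literally the same, the remaining equations being $0=0$. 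This yields the equivalence, and intersecting over all $\alpha\in\Phi_{\mathrm{iso}}$ gives $(\mathrm{U}^0_{\mathrm{ev}})^{W}_{\mathrm{sup}}=J_{\mathrm{ev}}(\mathfrak{g})$.

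The main obstacle is the structural input $\tfrac12\alpha\notin\Lambda$ for isotropic roots $\alpha$: it is precisely what makes the ``sup'' condition — which a priori only controls the total $\sum a_\mu$ over a whole $\mathbb{Z}\alpha$-coset — refine to the coset-by-coset vanishing imposed by $D_\alpha(u)\in(\bbk_\alpha^2-1)$. Were it to fail, the two rings would genuinely differ, so this point cannot be bypassed; it must be read off from the classification of root systems (where it always holds, e.g.\ by pairing $\varepsilon_i-\delta_j$ against a suitable long even root).
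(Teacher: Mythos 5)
Your proof is correct and follows essentially the same route as the paper's: both reduce to a fixed isotropic root $\alpha$, use the existence of an even root $\beta$ with $\langle\alpha,\beta\rangle$ odd to confine the support of each $\mathbb{Z}\alpha$-coset to a single $\mathbb{Z}(2\alpha)$-coset, and identify membership of $D_\alpha(u)$ in $(\bbk_\alpha^2-1)$ with the vanishing of coset sums. The only (cosmetic) difference is that you obtain the equivalence in one stroke via the quotient $k[\mathbb{Z}\Phi]\twoheadrightarrow k[\mathbb{Z}\Phi/\mathbb{Z}(2\alpha)]$, whereas the paper verifies the two inclusions separately, using the coarser ideal $(\bbk_\alpha-1)$ for the converse direction.
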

\begin{proof}
For $h=\mathop{\sum}\limits_{\mu} a_{\mu}\bbk_{\mu}\in(\mathrm{U}^0_{\mathrm{ev}})^{W}_{\mathrm{sup}} $, we define $\mathrm{Supp}(h)$ as the set of $\mu\in 2\Lambda\cap \mathbb{Z}\Phi$ for which $a_\mu$ is nonzero. For any $\alpha\in\Phi_{\mathrm{iso}}$, take a finite set $X$ such that $\mathrm{Supp}(h)\subset \mathop{\bigcup}\limits_{\nu\in X} (\nu+\mathbb{Z}_+\alpha)$. Furthermore, $\mathrm{Supp}(h)\subset \mathop{\bigcup}\limits_{\nu\in X} (\nu+2\mathbb{Z}_+\alpha)$ since there is an even root $\beta$ such that $\frac{2(\alpha,\beta)}{(\beta,\beta)}=1$. Then
\begin{align}
D_{\alpha}(h)=\mathop{\sum}\limits_{\mu} a_{\mu}(\mu,\alpha)\bbk_{\mu}=\mathop{\sum}\limits_{\nu\in X}\mathop{\sum}\limits_{k\in\mathbb{Z}_+} a_{\nu+2k\alpha}(\nu,\alpha)\bbk_{\nu+2k\alpha}
\end{align}
and
\begin{align*}
\mathop{\sum}\limits_{k\in\mathbb{Z}_+} a_{\nu+2k\alpha}(\nu,\alpha)\bbk_{\nu+2k\alpha}\in (\bbk_{\alpha}^2-1), \quad \text{ for all } \nu\in X
\end{align*}
because $\mathop{\sum}\limits_{k\in\mathbb{Z}_+} a_{\nu+k\alpha}=0$ for all $\nu\in X$ with $(\nu,\alpha)\neq 0$ and $\mathop{\sum}\limits_{k\in\mathbb{Z}_+} a_{\nu+2k\alpha}(\nu,\alpha)\bbk_{\nu+2k\alpha}=0$ for all $\nu\in X$ with $(\nu,\alpha)=0$.

On the other hand, take an element $h=\mathop{\sum}\limits_{\mu} a_{\mu}\bbk_{\mu}\in J_{\mathrm{ev}}(\mathfrak{g})$, then $$D_{\alpha}(h)=\mathop{\sum}\limits_{\mu} a_{\mu}(\mu,\alpha)\bbk_{\mu}=\mathop{\sum}\limits_{\nu\in X}\mathop{\sum}\limits_{k\in\mathbb{Z}_+} a_{\nu+k\alpha}(\nu,\alpha)\bbk_{\nu+k\alpha}\in(\bbk_{\alpha}-1),$$
for any $\alpha\in\Phi_{\mathrm{iso}}$. Therefore, $\mathop{\sum}\limits_{k\in\mathbb{Z}_+} a_{\nu+k\alpha}\bbk_{\nu+k\alpha}\in(\bbk_{\alpha}-1)$ for any $\nu\in X$ with $(\nu,\alpha)\neq 0$. This implies that  $\mathop{\sum}\limits_{\mu \in A^{\alpha}_{\nu}}a_{\mu}=\mathop{\sum}\limits_{k\in\mathbb{Z}_+} a_{\nu+k\alpha}=0$.
 \end{proof}
\begin{proposition}\label{Psi_R}
There is a linear map $\Psi_{\mathcal{R}}\colon k\otimes_{\mathbb{Z}}K_{\mathrm{ev}}(\mathrm{U}_q(\mathfrak{g}))\to \mathcal{Z}(\mathrm{U}_q(\mathfrak{g}))$ such that the diagram in the introduction commutes.
\end{proposition}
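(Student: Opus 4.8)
The plan is to construct $\Psi_{\mathcal{R}}$ from the central elements produced by the Rosso form and the quantum supertrace, namely the elements $z_M$ of Lemma~\ref{zlambda}, and then to reduce commutativity of the whole diagram to a single explicit computation of $\mathcal{HC}(z_M)$. First I would observe that if $[M]\in K_{\mathrm{ev}}(\mathrm{U}_q(\mathfrak{g}))$ then every weight $\nu$ of $M$ lies in $\Lambda\cap\frac{1}{2}\mathbb{Z}\Phi$, so $2\nu\in\mathbb{Z}\Phi$ and Lemma~\ref{zlambda} yields $z_M\in\mathcal{Z}(\mathrm{U}_q(\mathfrak{g}))$ characterized by $\langle u,z_M\rangle=\mathrm{Str}_q^M(u\bbk_{2\rho}^{-1})$ for all $u$. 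Because the supertrace is additive along short exact sequences with even morphisms (the operators involved are module endomorphisms, hence preserve submodules) and changes sign under the parity functor $\Pi$, the rule $[M]\mapsto z_M$ respects both defining relations of the Grothendieck group and so extends uniquely to a $k$-linear map $\Psi_{\mathcal{R}}\colon k\otimes_{\mathbb{Z}}K_{\mathrm{ev}}(\mathrm{U}_q(\mathfrak{g}))\to\mathcal{Z}(\mathrm{U}_q(\mathfrak{g}))$ with $\Psi_{\mathcal{R}}([M])=z_M$; by Corollary~\ref{centerU_0} one has $z_M\in\mathrm{U}_0$, so $\mathcal{HC}(z_M)$ is defined, and it lies in $(\mathrm{U}^0_{\mathrm{ev}})^{W}_{\mathrm{sup}}$ by Lemmas~\ref{U0}, \ref{Uev} and~\ref{Usup}. (We do not claim here that $\Psi_{\mathcal{R}}$ is injective or multiplicative; those facts will be extracted later from the completed diagram.)

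The key step will be the identity
\begin{equation*}
\mathcal{HC}(z_M)=\sum_{\nu\in\mathrm{wt}(M)}\mathrm{sdim}(M_\nu)\,\bbk_{-2\nu}.
\end{equation*}
To obtain it I would pair $z_M$ with the elements $\bbk_\eta$, $\eta\in\mathbb{Z}\Phi$. Formula~\eqref{comp} shows that $\langle\bbk_\eta,-\rangle$ kills every summand of $z_M$ lying in $\mathrm{U}^-_{-\beta}\mathrm{U}^0\mathrm{U}^+_\beta$ with $\beta>0$ (the graded pairing of $\mathrm{U}^-$ with $\mathrm{U}^+$ forces $\beta=0$), so writing $\pi(z_M)=\sum_\mu a_\mu\bbk_\mu$ we get $\langle\bbk_\eta,z_M\rangle=\langle\bbk_\eta,\pi(z_M)\rangle=\sum_\mu a_\mu q^{-(\eta,\mu)/2}$. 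On the other hand the defining property of $z_M$ with $u=\bbk_\eta$ gives $\langle\bbk_\eta,z_M\rangle=\sum_\nu\mathrm{sdim}(M_\nu)\,q^{(\nu,\eta)-2(\nu,\rho)}$. Comparing the two as functions of $\eta$ and using linear independence of the characters $\eta\mapsto q^{(\eta,\xi)}$ for distinct $\xi$ (i.e. non-degeneracy of $(\text{-},\text{-})$ in the relevant slot) forces $a_{-2\nu}=\mathrm{sdim}(M_\nu)\,q^{-2(\nu,\rho)}$ and $a_\mu=0$ otherwise; applying $\gamma_{-\rho}$, which multiplies $\bbk_{-2\nu}$ by $q^{2(\rho,\nu)}$, cancels the scalar and produces the displayed formula.

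Granting this, commutativity of the diagram reduces to its bottom-left parallelogram — the only piece involving $\Psi_{\mathcal{R}}$ — since the remaining squares commute by the commutative diagram in Section~\ref{Representations} and by naturality of the various inclusions. Applying $\iota$, which sends $\bbk_\mu$ to $e^{-\mu/2}$, to the identity above gives, for $M=L_q(\lambda)$ with $\lambda\in\Lambda\cap\frac{1}{2}\mathbb{Z}\Phi$ and $\dim L_q(\lambda)<\infty$,
\begin{equation*}
\iota\bigl(\mathcal{HC}(z_{L_q(\lambda)})\bigr)=\sum_\nu\mathrm{sdim}\bigl(L_q(\lambda)_\nu\bigr)e^\nu=\mathrm{Sch}(L_q(\lambda))=\mathrm{Sch}(L(\lambda)),
\end{equation*}
the last equality by Theorem~\ref{repquantum} and Proposition~\ref{injgrothendieck}. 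Since $\iota$ is injective and $\jmath([L(\lambda)])=[L_q(\lambda)]$, this says precisely that $\mathcal{HC}\circ\Psi_{\mathcal{R}}\circ\jmath$ carries $[L(\lambda)]$ to the element of $(\mathrm{U}^0_{\mathrm{ev}})^{W}_{\mathrm{sup}}=J_{\mathrm{ev}}(\mathfrak{g})$ corresponding under $\iota$ to $\mathrm{Sch}(L(\lambda))$, i.e. to the image of $[L(\lambda)]$ under the isomorphism $K_{\mathrm{ev}}(\mathfrak{g})\xrightarrow{\sim}J_{\mathrm{ev}}(\mathfrak{g})$ of Section~\ref{Representations}; as the classes $[L(\lambda)]$ form a $k$-basis, the parallelogram commutes.

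The hard part will be the computation underlying the displayed formula for $\mathcal{HC}(z_M)$: tracking the super-signs through the interaction between the quantum supertrace and the pairing~\eqref{comp} (the ``lengthy computation'' alluded to in the introduction), and handling $\mathfrak{g}=A(n,n)$, where even the membership $z_M\in\mathrm{U}_0$ is not a formality but rests on Corollary~\ref{centerU_0} (hence on Theorem~\ref{annihilate A(n,n)}) and one must keep careful track of the distinction between $\mathbb{Z}\Phi$ and $\mathbb{Z}\tilde\Phi$. The well-definedness of $\Psi_{\mathcal{R}}$ and the final diagram chase are then formal.
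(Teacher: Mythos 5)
Your proposal is correct and takes essentially the same approach as the paper: it defines $\Psi_{\mathcal{R}}([M])=z_M$, checks additivity and the sign change under $\Pi$ via the quantum supertrace, and derives $\mathcal{HC}(z_M)=\sum_{\nu}\mathrm{sdim}(M_\nu)\bbk_{-2\nu}$ by pairing $z_M$ against the $\bbk_\eta$ and comparing with the supertrace of $\bbk_{\eta-2\rho}$, exactly as in the paper's computation of $z_{M,0}$.
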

\begin{proof}
Define a map from $\Psi_{\mathcal{R}}\colon k\otimes_{\mathbb{Z}}K_{\mathrm{ev}}(\mathrm{U}_q(\mathfrak{g}))\to \mathcal{Z}(\mathrm{U}_q(\mathfrak{g}))$ with $\Psi_{\mathcal{R}}([M])=z_M$ where $z_M$ is defined in Lemma \ref{zlambda}.
We need to prove the well-defineness of $\Psi_{\mathcal{R}}$ and $\iota\circ\mathcal{HC}(z_M)=\mathrm{Sch}([M])$ for all $M$ in $\mathrm{U}$-{\bf mod} with all weights contained in $\Lambda\cap \frac{1}{2}\mathbb{Z}\Phi$.


Choose a homogeneous basis $\{m_1,\cdots,m_k,\cdots,m_l\}$ of $M$ such that $\{m_1,\cdots,m_k\}$ is a basis of $L$ and $\{\bar{m}_{k+1},\cdots,\bar{m}_l\}$ is a basis of $N$. Let $\{f_1,\cdots,f_l\}$ be the dual basis of $M$, then $\{f_1,\cdots,f_k\}$ and $\{\bar{f}_{k+1},\cdots,\bar{f}_l\}$ can be viewed as dual bases of $L$ and $N$, respectively. So $\{\pi\otimes m_1,\cdots,\pi\otimes m_l\}$ (resp. $\{\pi\otimes f_1,\cdots,\pi\otimes f_l\}$) is the basis (resp. dual bases) of $\mathrm{wt}(M)$, and $|\pi\otimes f_i|=|\pi\otimes m_i|=-|m_i|=-|f_i|$ for all $i$. Hence,
\begin{align*}
\langle u,z_{M}\rangle&=\mathop{\sum}\limits_{i=1}^l(-1)^{|m_i|}f_i(u\bbk_{2\rho}^{-1}m_i)\\
&=\mathop{\sum}\limits_{i=1}^k(-1)^{|m_i|}f_i(u\bbk_{2\rho}^{-1}m_i)+
\mathop{\sum}\limits_{i=k+1}^l(-1)^{|m_i|}f_i(u\bbk_{2\rho}^{-1}m_i)\\
&=\mathop{\sum}\limits_{i=1}^k(-1)^{|m_i|}f_i(u\bbk_{2\rho}^{-1}m_i)+
\mathop{\sum}\limits_{i=k+1}^l(-1)^{|\bar{m}_i|}\bar{f}_i(u\bbk_{2\rho}^{-1}\bar{m}_i)\\
&=\langle u,z_{L}\rangle+\langle u,z_{N}\rangle=\langle u,z_{L}+z_{N}\rangle;\\
\langle u,z_{M}\rangle&=\mathop{\sum}\limits_{i=1}^l(-1)^{|m_i|}f_i(u\bbk_{2\rho}^{-1}m_i)=-\mathop{\sum}\limits_{i=1}^l(-1)^{|\pi\otimes m_i|}(\pi\otimes f_i)\big(u\bbk_{2\rho}^{-1}(\pi\otimes m_i)\big)\\
&=-\langle u,z_{\Pi(M)}\rangle.
\end{align*}
Therefore, $z_{L}-z_M+z_N=0$ and $z_M+z_{\Pi(M)}=0$ according to Proposition \ref{propnondeg}.

Since $z_M$ is central, we have $z_M=\mathop{\sum}\limits_{\mu\geqslant0}z_{M,\mu}$ where $z_{M,\mu}\in\mathrm{U}^-_{-\mu}\mathrm{U}^0\mathrm{U}^+_{\mu}$. Write $z_{M,0}=\mathop{\sum}\limits_{\nu} a_{\nu}\bbk_{\nu}$. Then we have
\begin{equation*}
\langle \bbk_{\mu'},z_{M}\rangle=\langle \bbk_{\mu'},z_{M,0}\rangle=\mathop{\sum}\limits_{\nu} a_{\nu}\left(q^{1/2}\right)^{-(\nu,\mu')},
\end{equation*}
for all $\mu'\in \mathbb{Z}\Phi$.
On the other hand, this is the supertrace of $\bbk_{\mu'-2\rho}$ acting on $M$. This means it is equal to\begin{equation*}
\mathop{\sum}\limits_{\lambda'}\mathrm{sdim}M_{\lambda'}q^{(\lambda',\mu'-2\rho)}=\mathop{\sum}\limits_{\lambda'}\mathrm{sdim}M_{\lambda'}q^{-2(\lambda',\rho)}\left(q^{1/2}\right)^{(2\lambda',\mu')}.
\end{equation*} 
A comparison of these two formulas shows that
\begin{equation*}
z_{M,0}=\mathop{\sum}\limits_{\lambda'}\mathrm{sdim}M_{\lambda'}q^{(-2\lambda',\rho)}\bbk_{-2\lambda'}.
\end{equation*}
We have $z_{M,0}=\pi(z_{M})$, hence
\begin{equation}
\gamma_{-\rho}\circ\pi(z_{M})=\mathop{\sum}\limits_{\lambda'}\mathrm{sdim}M_{\lambda'}\bbk_{-2\lambda'},
\end{equation}
and $\iota\circ\mathcal{HC}(z_M)=\mathop{\sum}\limits_{\lambda'}\mathrm{sdim}M_{\lambda'}e^{\lambda'}=\mathrm{Sch}([M]). $
\end{proof}
\textbf{\textit{Proof of Theorem A:}}
\begin{equation*}
\xymatrix@C=25pt{
k\otimes_{\mathbb{Z}}K_{\mathrm{ev}}(\mathrm{U}_q(\mathfrak{g}))\ar@{-->}[d]_{\Psi_{\mathcal{R}}}&&k\otimes_{\mathbb{Z}}K_{\mathrm{ev}}(\mathfrak{g})\ar@{_{(}->}[ll]\ar@{-->}[d]_{\cong} \\
\mathcal{Z}(\mathrm{U}_q(\mathfrak{g}))\ar@{-->}[r]^(.42){\mathcal{HC}}
&(\mathrm{U}^0_{\mathrm{ev}})_{\mathrm{sup}}^W\ar@{=}[r]& k\otimes_{\mathbb{Z}}J_{\mathrm{ev}}(\mathfrak{g})
}
\end{equation*}
The injectivity of $\mathcal{HC}$ follows from \ref{injective}, so we only need to prove $\mathrm{Im}\mathcal{HC}=(\mathrm{U}^0_{\mathrm{ev}})_{\mathrm{sup}}^W$. Based on Proposition \ref{Psi_R}, the above diagram is commutative, so $\mathrm{Im}\mathcal{HC}=(\mathrm{U}^0_{\mathrm{ev}})_{\mathrm{sup}}^W$. \hfill $\square$\\

By using $\iota\circ\mathcal{HC}\circ\Psi_{\mathcal{R}}([M])=\mathrm{Sch}([M])$ for all $[M]\in K_{\mathrm{ev}}(\mathrm{U}_q(\mathfrak{g}))$, we get $\Psi_{\mathcal{R}}$ is injective. All morphisms in the diagram above are algebra isomorphisms as a result. Furthermore, for any $[M]\in K_{\mathrm{ev}}(\mathrm{U}_q(\mathfrak{g}))$, there exists $\mathop{\sum}\limits_ia_i[L(\lambda_i)]$ with $a_i\in k$ such that $\jmath(\mathop{\sum}\limits_ia_i[L(\lambda_i)])=[M]$, and these $\lambda_i$ are distinct. Let $X=\{\lambda_i| a_i\notin \mathbb{Z} \}$. Supposing that $X$ is nonempty and taking a maximal element $\lambda_t$ in $X$ for some $t$, we get $\mathrm{dim}M_{\lambda_t}=\mathop{\sum}\limits_ia_{i}\mathrm{dim}L(\lambda_i)_{\lambda_t}\in\mathbb{Z}$ and $\mathrm{dim}L(\lambda_i)_{\lambda_t}=\delta_{it}$. Thus $a_{t}=\mathrm{dim}M_{\lambda_t}$ is an integer, contradicting $\lambda_t\in X$. Therefore, $X$ is empty and $a_i\in\mathbb{Z}$ for all $i$. Thus, $K_{\mathrm{ev}}(\mathfrak{g})\hookrightarrow K_{\mathrm{ev}}(\mathrm{U}_q(\mathfrak{g}))$ is an isomorphism induced by $\jmath$. 

\begin{remark}\label{A(1,1)}
In Appendix \ref{appendixB}, we describe the $J_{\mathrm{ev}}(\mathfrak{g})$ in the sense of Sergeev and Veselov \cite{SerVes} and illustrate why $K_{\mathrm{ev}}(\mathfrak{g})\ncong J_{\mathrm{ev}}(\mathfrak{g})$ if $\mathfrak{g}=A(1,1)$ since $u-v=\bbk_1+\bbk_1^{-1}-\bbk_3-\bbk_3^{-1}\in (\mathrm{U}^0_{\mathrm{ev}})_{\mathrm{sup}}^W=J_{\mathrm{ev}}(\mathfrak{g})$ and $u-v\notin J(A(1,1))$. Therefore, $J(A(1,1))\subseteq\mathrm{Im}(\mathcal{HC})\subseteq J_{\mathrm{ev}}(\mathfrak{g})$. However, the image of $\mathcal{HC}$ for $\mathfrak{g}=A(1,1)$ has not yet determined.

\end{remark}

\section{Center of quantum superalgebras}\label{Centers}
\subsection{Quasi-R-matrix}
In Section \ref{HChomomorphism},  we established the $\mathcal{HC}$ for quantum superalgebras and proved that the center $\mathcal{Z}(\mathrm{U}_q(\mathfrak{g}))$ is isomorphic to $(\mathrm{U}^0_{\mathrm{ev}})^{W}_{\mathrm{sup}}$, the subalgebra of the ring of exponential super-invariants  $J_{\mathrm{ev}}(\mathfrak{g})$.  This section studies the structural theorem for the center.   Our approach to obtaining a structural theorem for quantum superalgebras takes advantage of the quasi-R-matrix,  which is inspired by  \cite{ZGB1,ZGB2}.  Recently,  based on main results \cite{LXZ},  Dai ang Zhang \cite{DZ} used the similar method to investigate explicit generators and relations for the center of the quantum group.  They proved that the center $\mathcal{Z}(\mathrm{U}_q(\mathfrak{g}))$ of quantum group $\mathrm{U}_q(\mathfrak{g})$ is isomorphic to the subring of Grothendieck algebra $K(\mathrm{U}_q(\mathfrak{g}))$.

For each $\mu\in Q$,  we take $u^{\mu}_1,u^{\mu}_2,\cdots,u^{\mu}_{r(\mu)}$ to be a basis of  $\mathrm{U}^+_{\mu}$. Since the skew-pairing between the $\mathrm{U}^+$ and $\mathrm{U}^-$ is non-degenerate, we can take the  dual basis $v^{\mu}_1,v^{\mu}_2,\cdots,v^{\mu}_{r(\mu)}$ of $\mathrm{U}^-_{-\mu}$,  with respect to 
$(v^{\mu}_i,u^{\mu}_j)=\delta_{ij}$, for all possible $i, j$. We have the following proposition.

\begin{proposition}\label{thetarelation}
Set $\Theta_{\mu}=\mathop{\sum}\limits_{i=1}^{r(\mu)}v_i^{\mu}\otimes u_i^{\mu}\in \mathrm{U}\otimes \mathrm{U}$. Then $\Theta_{\mu}$ does not depend on the choice of the basis $(u_i^{\mu})_i$ and
\begin{align}
(\bbe_i\otimes 1)\Theta_{\mu}+(\bbk_i\otimes \bbe_i)\Theta_{\mu-\alpha_i}&=\Theta_{\mu}(\bbe_i\otimes 1)+\Theta_{\mu-\alpha_i}(\bbk_i^{-1}\otimes \bbe_i),\label{theta1}\\
(1\otimes \bbf_i)\Theta_{\mu}+(\bbf_i\otimes \bbk_i^{-1})\Theta_{\mu-\alpha_i}&=\Theta_{\mu}(1\otimes \bbf_i)+\Theta_{\mu-\alpha_i}(\bbf_i\otimes\bbk_i),\label{theta2}\\
(\bbk_i\otimes\bbk_i)\Theta_{\mu}&=\Theta_{\mu}(\bbk_i\otimes \bbk_i).\label{theta3}
\end{align}
\end{proposition}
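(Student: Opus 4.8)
The plan is to prove the two assertions in turn: first the basis-independence of $\Theta_\mu$, then the three commutation relations \eqref{theta1}--\eqref{theta3}. Both parts follow the classical template for quantum groups (cf.\ \cite[Chapter 4]{Jan}), adapted to the super setting, where the skew-Hopf pairing $(\:,\:)\colon\mathrm{U}^-\times\mathrm{U}^+\to k$ from \eqref{double} plays the central role.

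\textbf{Basis-independence.} Suppose $(u_i^\mu)_i$ and $(\tilde u_i^\mu)_i$ are two homogeneous bases of $\mathrm{U}^+_\mu$, related by $\tilde u_j^\mu=\sum_i c_{ij}u_i^\mu$ with $c=(c_{ij})\in\mathrm{GL}(k)$. The dual bases $(v_i^\mu)_i$ and $(\tilde v_i^\mu)_i$ of $\mathrm{U}^-_{-\mu}$ with respect to the (nondegenerate, by the Proposition following \eqref{double}) pairing then satisfy $\tilde v_j^\mu=\sum_i (c^{-1})_{ji}\,v_i^\mu$, since $(\tilde v_j^\mu,\tilde u_l^\mu)=\sum_{i,k}(c^{-1})_{ji}c_{kl}(v_i^\mu,u_k^\mu)=\sum_i(c^{-1})_{ji}c_{il}=\delta_{jl}$. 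Hence $\sum_j \tilde v_j^\mu\otimes\tilde u_j^\mu=\sum_j\sum_{i,k}(c^{-1})_{ji}c_{kj}\,v_i^\mu\otimes u_k^\mu=\sum_{i,k}\delta_{ik}\,v_i^\mu\otimes u_k^\mu=\Theta_\mu$. One should note that all bases are taken homogeneous and the signs match because $|v_i^\mu|=|u_i^\mu|$ (the pairing is degree-preserving and parity-preserving), so no extra super-signs intervene.

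\textbf{The commutation relations.} The strategy is to pair both sides of each identity against an arbitrary element of the appropriate weight space and use nondegeneracy. For \eqref{theta2}, fix $j\in\mathbb{I}$ and test against elements $y\in\mathrm{U}^-_{-\nu}$ in the first tensor slot: concretely, I would show that for all $y\in\mathrm{U}^-_{-(\mu+\alpha_j)}$ and all $x\in\mathrm{U}^+_\mu$ one has
\begin{equation*}
\bigl((1\otimes\bbf_j)\Theta_\mu+(\bbf_j\otimes\bbk_j^{-1})\Theta_{\mu-\alpha_j}\bigr)\ \text{and}\ \bigl(\Theta_\mu(1\otimes\bbf_j)+\Theta_{\mu-\alpha_j}(\bbf_j\otimes\bbk_j)\bigr)
\end{equation*}
have the same ``matrix coefficients'', i.e.\ produce the same scalar after applying $(y,\,\cdot\,)$ to the left factor and reading off the coefficient of $\bbf_jx$ versus $x\bbf_j$ in the right factor. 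The key computational input is the pair of identities already recorded in the excerpt relating the twisted derivations $r_i, r_i'$ to the pairing, namely $(\bbf_iy,x)=(-1)^{|r_i'(x)||\bbe_i|}(\bbf_i,\bbe_i)(y,r_i'(x))$ and $(y\bbf_i,x)=(-1)^{|\bbf_i||r_i(x)|}(\bbf_i,\bbe_i)(y,r_i(x))$, together with the coproduct truncations $\Delta(x)=x\otimes 1+\sum_i r_i(x)\bbk_i\otimes\bbe_i+\cdots$ and $\Delta(y)=y\otimes\bbk_\mu^{-1}+\cdots$. Expanding $\Theta_\mu=\sum_i v_i^\mu\otimes u_i^\mu$ and using that $\{u_i^\mu\}$, $\{v_i^\mu\}$ are dual, both sides collapse to the same expression; the identity \eqref{theta1} is then obtained by applying the automorphism $\omega$ (which swaps $\mathrm{U}^+\leftrightarrow\mathrm{U}^-$ and sends $\bbf_j\mapsto\bbe_j$, $\bbk_j\mapsto\bbk_j^{-1}$, with the compatible sign on $\bbe_j$), after checking that $\omega$ interchanges $\Theta_\mu$ with itself up to the relevant signs, or alternatively by a symmetric computation using the left-hand $r_i'$-identities for $y\in\mathrm{U}^-$. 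Relation \eqref{theta3} is immediate: $\bbk_i v_l^\mu\bbk_i^{-1}\otimes\bbk_i u_l^\mu\bbk_i^{-1}=q^{-(\alpha_i,\mu)}q^{(\alpha_i,\mu)}v_l^\mu\otimes u_l^\mu=v_l^\mu\otimes u_l^\mu$ since $v_l^\mu\in\mathrm{U}^-_{-\mu}$ and $u_l^\mu\in\mathrm{U}^+_\mu$, so both sides equal $\Theta_\mu$ conjugated, hence $(\bbk_i\otimes\bbk_i)\Theta_\mu(\bbk_i^{-1}\otimes\bbk_i^{-1})=\Theta_\mu$.

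\textbf{Main obstacle.} The routine-but-delicate part is bookkeeping the super-signs in the expansion of \eqref{theta1}--\eqref{theta2}: the coproduct on $\mathrm{U}^+$ lives in the braided tensor product, the pairing carries parity, and the dual bases must be chosen homogeneous, so each application of the $r_i$/$r_i'$ identities drops a sign depending on $|\bbe_i|$, $|u_i^\mu|$, and the residual weights. I expect the cleanest route is to first establish \eqref{theta2} by the matrix-coefficient argument against $\mathrm{U}^-_{-(\mu+\alpha_j)}\times\mathrm{U}^+_\mu$ (where nondegeneracy of the pairing gives uniqueness), being careful that the grading shift $\bbk_j^{-1}$ versus $\bbk_j$ on the second slot accounts for exactly the weight discrepancy between $\Theta_\mu$ and $\Theta_{\mu-\alpha_j}$, and then deduce \eqref{theta1} formally via $\omega$ rather than redoing the computation. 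The verification that $\omega$ (or the antipode, in Jantzen's alternative approach) acts correctly on the family $\{\Theta_\mu\}$ is itself a short sign-chase but is the step most prone to error.
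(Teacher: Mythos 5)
Your proposal is essentially correct and its computational core coincides with the paper's: both rest on the dual-basis property of $\{u_i^\mu\},\{v_i^\mu\}$ together with the $r_i,r_i'$ identities for the skew-pairing. The paper's route is slightly more direct than yours: it first isolates the commutator formula $[\bbe_i,y]=(q_i-q_i^{-1})^{-1}\big((-1)^{|\bbe_i||r_i(y)|}\bbk_ir_i(y)-r_i'(y)\bbk_i^{-1}\big)$ as a standalone lemma, then computes $(\bbe_i\otimes 1)\Theta_\mu-\Theta_\mu(\bbe_i\otimes 1)=\sum_j[\bbe_i,v_j^\mu]\otimes u_j^\mu$ and expands $r_i(v_j^\mu),r_i'(v_j^\mu)$ in the dual basis of $\mathrm{U}^-_{-(\mu-\alpha_i)}$ to reassemble $\Theta_{\mu-\alpha_i}$, with no appeal to nondegeneracy; your matrix-coefficient test achieves the same thing but note the pairing is $(\mathrm{U}^-,\mathrm{U}^+)$-valued, so the left tensor factor (which lies in $\mathrm{U}^-$) must be tested against $x\in\mathrm{U}^+_{\mu+\alpha_j}$, not against $y\in\mathrm{U}^-$ as written. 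The one genuinely fragile step in your plan is deducing \eqref{theta1} from \eqref{theta2} via $\omega$: since $\omega$ swaps $\mathrm{U}^+_\mu$ and $\mathrm{U}^-_{-\mu}$, the element $(\omega\otimes\omega)(\Theta_\mu)$ is only a flip of $\Theta_\mu$ up to a $\mu$-dependent scalar coming from how the pairing transforms under $\omega$ (already on generators one picks up $(-1)^{|\bbe_i|}$), and this scalar differs between $\mu$ and $\mu-\alpha_i$, so the two terms of \eqref{theta2} need not transform coherently without an extra normalization argument. The paper sidesteps this by proving \eqref{theta1} directly and declaring \eqref{theta2} symmetric, which is exactly your stated fallback, so your proposal is sound provided you take that branch.
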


\begin{proof} 
It is easy to check $\Theta_{\mu}$ does not depend on the choice of the basis $(u_i^{\mu})_i$ and (\ref{theta3}). For (\ref{theta1}), we have
\begin{align*}
&(\bbe_i\otimes 1)\Theta_{\mu}-\Theta_{\mu}(\bbe_i\otimes 1)=\mathop{\sum}\limits_{j=1}^{r(\mu)}[\bbe_i,v_j^{\mu}]\otimes u_j^{\mu} \\
=&\mathop{\sum}\limits_{j=1}^{r(\mu)}(q_i-q_i^{-1})^{-1}\big((-1)^{|\bbe_i||r_i(v_j^{\mu})|}\bbk_ir_i(v_j^{\mu})-r'_i(v_j^{\mu})\bbk_i^{-1}\big)\otimes u_j^{\mu}\\
=&\mathop{\sum}\limits_{j=1}^{r(\mu)}\mathop{\sum}\limits_{k=1}^{r(\mu-\alpha_i)}(q_i-q_i^{-1})^{-1}\Big((-1)^{|\bbe_i||r_i(v_j^{\mu})|}\bbk_i\big(r_i(v_j^{\mu}),u_k^{\mu-\alpha_i}\big)v_k^{\mu-\alpha_i}-\big(r'_i(v_j^{\mu}),u_k^{\mu-\alpha_i}\big)v_k^{\mu-\alpha_i}\bbk_i^{-1}\Big)\otimes u_j^{\mu}\\
=&\mathop{\sum}\limits_{j=1}^{r(\mu)}\mathop{\sum}\limits_{k=1}^{r(\mu-\alpha_i)}\Big(-(-1)^{|\bbe_i||r_i(v_j^{\mu})|}\bbk_i(\bbf_i,\bbe_i)\big(r_i(v_j^{\mu}),u_k^{\mu-\alpha_i}\big)v_k^{\mu-\alpha_i}+(\bbf_i,\bbe_i)\big(r'_i(v_j^{\mu}),u_k^{\mu-\alpha_i}\big)v_k^{\mu-\alpha_i}\bbk_i^{-1}\Big)\otimes u_j^{\mu}\\
=&\mathop{\sum}\limits_{j=1}^{r(\mu)}\mathop{\sum}\limits_{k=1}^{r(\mu-\alpha_i)}\big(-(-1)^{|\bbe_i||r_i(v_j^{\mu})|}\bbk_i(v_j^{\mu},\bbe_iu_k^{\mu-\alpha_i})v_k^{\mu-\alpha_i}+(v_j^{\mu},u_k^{\mu-\alpha_i}\bbe_i)v_k^{\mu-\alpha_i}\bbk_i^{-1}\big)\otimes u_j^{\mu}\\
=&\mathop{\sum}\limits_{k=1}^{r(\mu-\alpha_i)}-(-1)^{|\bbe_i||r_i(v_j^{\mu})|}\bbk_iv_k^{\mu-\alpha_i}\otimes \bbe_iu_k^{\mu-\alpha_i}+v_k^{\mu-\alpha_i}\bbk_i^{-1}\otimes u_k^{\mu-\alpha_i}\bbe_i\\
=&-(\bbk_i\otimes \bbe_i)\Theta_{\mu-\alpha_i}+\Theta_{\mu-\alpha_i}(\bbk_i^{-1}\otimes \bbe_i).
\end{align*}
Thus, (\ref{theta1}) holds.  Because the proof for Equation (\ref{theta2}) is similar to that for Equation (\ref{theta1}), we omit it here.
\end{proof}

There is an algebra automorphism $\phi$ of $\mathrm{U}_q(\mathfrak{g})\otimes\mathrm{U}_q(\mathfrak{g})$ defined by\begin{align*}
&\phi(\bbk_i\otimes 1)=\bbk_i\otimes 1,\quad \phi(\bbe_i\otimes 1)=\bbe_i\otimes \bbk_i^{-1},\quad \phi(\bbf_i\otimes 1)=\bbf_i\otimes \bbk_i,\\
&\phi(1\otimes \bbk_i)=1\otimes \bbk_i,\quad \phi(1\otimes \bbe_i)=\bbk_i^{-1}\otimes\bbe_i\quad \phi(1\otimes\bbf_i)=\bbk_i\otimes\bbf_i,
\end{align*}
and $\phi$ can be extended to $\mathrm{U}_q(\mathfrak{g})\widehat{\otimes}\mathrm{U}_q(\mathfrak{g})$, which is a completion of the tensor product $\mathrm{U}_q(\mathfrak{g})\otimes \mathrm{U}_q(\mathfrak{g})$. Then the quasi-R-matrix is $\mathop{\sum}\limits_{\mu\geqslant0}\Theta_{\mu}\in\mathrm{U}_q(\mathfrak{g})\widehat{\otimes}\mathrm{U}_q(\mathfrak{g})$\footnote{More properties about quasi-R-matrix in a super setting can be deduction follows \cite[Chapter 4]{Lu}. For example, $\bar{\mathfrak{R}}=\mathfrak{R}^{-1}$, where the automorphism $\bar{~}$ of $\mathrm{U}\widehat{\otimes} \mathrm{U}$  is defined in \cite[Chapter 4]{Lu}.} and it is invertible. Its inverse is denoted by $\mathfrak{R}$. Then, by Proposition \ref{thetarelation}, we have 
\begin{align*}
\mathfrak{R}\Delta(u)=\phi\big(\Delta^{op}(u)\big)\mathfrak{R},\text{ and}\quad  \mathfrak{R}^{op}\Delta^{op}(u)=\phi\big(\Delta(u)\big)\mathfrak{R}^{op}.
\end{align*}
The universal R-matrix can be derived from the quasi-R-matrix,  which is significant because it can induce solutions of the quantum Yang-Baxter equation on any of its modules. This approach is prominent in the study of integrable systems,  knot invariants and so on.  The following proposition is essential for us to construct the explicit central elements,  named Casimir invariants,  which have been used to construct a family of Casimir invariants for quantum groups \cite{DZ}, quantum superalgebras $\mathrm{U}_q(\mathfrak{gl}_{m|n})$ and $\mathrm{U}_q(\mathfrak{osp}_{m|2n})$.  
\subsection{Constructing central elements using quasi-R-matrix}
\begin{proposition}\label{Gamma_M} \cite[Proposition 3.1]{DGL} Given an operator $\Gamma_M\in\mathrm{End}(M)\otimes \mathrm{U}_q(\mathfrak{g})$ satisfying
\begin{equation}
[\Gamma_M,\Delta(u)]=0\quad \text{for all}~ u\in \mathrm{U}_q(\mathfrak{g}),
\end{equation}
the elements 
\begin{equation}\label{C^k_M}
C_M^{(k)}:=\mathrm{Str}_1\big((\zeta\otimes 1)(\bbk_{2\rho}\otimes 1)(\Gamma_M)^k\big)
\end{equation}
are central in $\mathrm{U}_q(\mathfrak{g})$, where  $\mathrm{Str}_1(f\otimes u)=\mathrm{Str}(f)u$ for $f\in\mathrm{End}(M)$ and $u\in\mathrm{U}_q(\mathfrak{g})$.
\end{proposition}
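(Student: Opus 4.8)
The plan is to show that each $C_M^{(k)}$ commutes with all generators $\bbe_i,\bbf_i,\bbk_i^{\pm 1}$ by exploiting the intertwining property of $\Gamma_M$. First I would reduce to checking commutation with the three families of generators, and handle the toral part $\bbk_i^{\pm 1}$ separately by a weight-counting argument: the assumption $[\Gamma_M,\Delta(\bbk_i)]=0$ forces $\Gamma_M$ to be a sum of terms $f\otimes u$ with $f$ of $\mathrm{U}^0$-weight $\mu$ and $u$ of degree $-\mu$ (in the $\mathbb{Z}\Phi$-grading), and then conjugation by $\bbk_{2\rho}\otimes 1$ together with the supertrace picks out, informally, the right balance of weights so that $C_M^{(k)}$ lands in $\mathrm{U}_0$ and in fact commutes with each $\bbk_i$. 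The key identity to establish is the ``cyclicity'' of the quantum supertrace twisted by $\bbk_{2\rho}$: for $A,B\in\mathrm{End}(M)\otimes\mathrm{U}$ one has
\begin{equation*}
\mathrm{Str}_1\big((\zeta\otimes 1)(\bbk_{2\rho}\otimes 1)\,AB\big)=\mathrm{Str}_1\big((\zeta\otimes 1)(\bbk_{2\rho}\otimes 1)\,B'A\big),
\end{equation*}
where $B'$ is $B$ conjugated by an appropriate $\mathrm{U}^0$-element, which is precisely the property encoded in $S^2(u)=\bbk_{2\rho}^{-1}u\bbk_{2\rho}$ and the definition of $\mathrm{Str}_q$ from the previous section.

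Second, for $u=\bbe_i$ I would write $\Delta(\bbe_i)=\bbk_i\otimes\bbe_i+\bbe_i\otimes 1$ and use $[\Gamma_M,\Delta(\bbe_i)]=0$ to rewrite
\begin{equation*}
(\bbk_i\otimes\bbe_i+\bbe_i\otimes 1)(\Gamma_M)^k=(\Gamma_M)^k(\bbk_i\otimes\bbe_i+\bbe_i\otimes 1),
\end{equation*}
which follows from the $k=1$ case since $\Delta(\bbe_i)$ commutes with each factor of $(\Gamma_M)^k$. Applying $\mathrm{Str}_1\big((\zeta\otimes 1)(\bbk_{2\rho}\otimes 1)(-)\big)$ to both sides and moving the outer factors through the supertrace using the cyclicity identity above, I expect the $\bbk_i\otimes\bbe_i$ terms on the two sides to combine into a single commutator $[\,C_M^{(k)},\bbe_i\,]$ up to a scalar coming from the weight shift, while the $\bbe_i\otimes 1$ terms contribute $\mathrm{Str}_1$ of something of the form $[\bbe_i\cdot, \text{stuff}]$ which vanishes because $\mathrm{Str}$ kills (super)commutators with $\mathrm{End}(M)$-factors — here one must be careful about the sign $(-1)^{|\bbe_i|}$ picked up from the parity of $\bbe_i$ and about the action of $\bbk_{2\rho}$ on the $\bbe_i$ slot, which contributes exactly the factor needed for the two $\bbk_i$-twisted pieces to cancel. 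The case $u=\bbf_i$ is entirely parallel using $\Delta(\bbf_i)=1\otimes\bbf_i+\bbf_i\otimes\bbk_i^{-1}$.

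The main obstacle I anticipate is bookkeeping the scalars and signs in the supertrace manipulation: one needs the precise interaction between $\bbk_{2\rho}$, the comultiplication coefficients $\bbk_i$ appearing in $\Delta(\bbe_i),\Delta(\bbf_i)$, and the parity-twist in $\mathrm{Str}_q$, so that the ``error terms'' from non-cyclicity of the naive trace are exactly absorbed. This is precisely the reason the twist $\bbk_{2\rho}$ (equivalently, working with $\mathrm{Str}_q$ rather than an ordinary trace) appears in the definition of $C_M^{(k)}$: it is the unique modification making $\mathrm{Str}_1$ behave cyclically with respect to conjugation by group-like elements, which is what the proof needs. A clean way to organize this is to first prove the cyclicity lemma for $\mathrm{Str}_q$ in the form ``$\mathrm{Str}_q(gh)=\mathrm{Str}_q(h\,\bbk_{2\rho}^{-1}g\,\bbk_{2\rho})$ up to the relevant sign'' as an abstract statement about $\mathrm{Str}_q^M$, then apply it mechanically; since the excerpt already records $S^2(u)=\bbk_{2\rho}^{-1}u\bbk_{2\rho}$ and the explicit formula $\mathrm{Str}_q(g)=\sum_i(-1)^{|m_i|}f_i(g(\bbk_{2\rho}^{-1}m_i))$, this lemma is a short computation, and the rest of the proof is a formal consequence of $[\Gamma_M,\Delta(u)]=0$ for $u$ a generator. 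Finally, since $C_M^{(k)}$ is manifestly even (each $\Gamma_M$-term, $\bbk_{2\rho}$, and $\zeta$ are even, or the odd parts cancel in the supertrace), it is a genuine central element of $\mathrm{U}_q(\mathfrak{g})$.
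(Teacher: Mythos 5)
Your proposal follows essentially the same route as the paper: reduce to the generators, use that $[\Gamma_M,\Delta(u)]=0$ implies $[(\Gamma_M)^k,\Delta(u)]=0$, apply $\mathrm{Str}_1$ with a $\bbk_{2\rho}$-twist to the vanishing commutator, and invoke $\mathrm{Str}([x,y])=0$ together with $S^2(u)=\bbk_{2\rho}^{-1}u\bbk_{2\rho}$ to identify the result with $[C_M^{(k)},u]$. The one point to pin down is the twist you only gesture at: with the plain factor $\bbk_{2\rho}\otimes 1$ the $\bbe_i\otimes 1$ terms do \emph{not} literally vanish (cyclicity produces the extra scalar $q^{-(2\rho,\alpha_i)}$), and the paper's device is to insert a generator-dependent factor — $\bbk_{2\rho}\bbk_i^{-1}$ for the $\bbk_i$ (and analogously the $\bbe_i$) computation, $\bbk_{2\rho}$ for $\bbf_i$ — before taking $\mathrm{Str}_1$, which is exactly what makes the unwanted terms cancel and the remaining ones assemble into $[C_M^{(k)},u]$; your weight-counting argument for the $\bbk_i$ case is a fine alternative there.
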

\begin{proof}
We only need to prove $[C_M^{(k)}, \bbk_i]=[C_M^{(k)}, \bbe_i]=[C_M^{(k)}, \bbf_i]=0$  for all $i\in\mathbb{I}$. Assume $(\Gamma_M)^k=\mathop{\sum}\limits_{j}A_j\otimes B_j$, then 
\begin{align*}
0&=\mathrm{Str}_1\big((\bbk_{2\rho}\bbk_i^{-1}\otimes 1)[(\Gamma_M)^k,\Delta(\bbk_i)]\big)\\
&=\mathrm{Str}_1\Big((\bbk_{2\rho}\bbk_i^{-1}\otimes 1)\Big[\mathop{\sum}\limits_{j}A_j\otimes B_j,\bbk_i\otimes \bbk_i\Big]\Big)\\
&=\mathop{\sum}\limits_{j}\mathrm{Str}(\bbk_{2\rho}\bbk_i^{-1}A_j\bbk_i)B_j\bbk_i-\mathop{\sum}\limits_{j}\mathrm{Str}(\bbk_{2\rho}A_j)\bbk_iB_j\\
&=[C_M^{(k)},\bbk_i],
\end{align*}
where the last equation holds by $\mathrm{Str}([x, y])=0$ for all $x,y\in\mathrm{End}(M)$.  And, 
\begin{align*}
0=&\mathrm{Str}_1\big((\bbk_{2\rho}\otimes 1)[(\Gamma_M)^k,\Delta(\bbf_i)]\big)\\
=&\mathrm{Str}_1\Big((\bbk_{2\rho}\otimes 1)\Big[\mathop{\sum}\limits_{j}A_j\otimes B_j,\bbf_i\otimes \bbk_i^{-1}+1\otimes\bbf_i\Big]\Big)\\
=&\mathrm{Str}_1\Big((\bbk_{2\rho}\otimes 1)\mathop{\sum}\limits_{j}\big((-1)^{|B_j||\bbf_i|}A_j\bbf_i \otimes B_j\bbk_i^{-1}+A_j\otimes B_j\bbf_i\\
&-(-1)^{|\bbf_i|(|A_j|+|B_j|)}\bbf_iA_j\otimes\bbk_i^{-1} B_j-(-1)^{|\bbf_i||B_j|}A_j\otimes\bbf_i B_j\big)\Big)\\
=&[C_M^{(k)},\bbf_i],
\end{align*}
where the last equation follows from  $\Big[\mathop{\sum}\limits_{j}A_j\otimes B_j,\bbk_i\otimes \bbk_i\Big]=0$ and $\mathrm{Str}([x, y])=0$ for all $x,y\in\mathrm{End}(M)$. 
\end{proof}

Define by $\zeta\colon\mathrm{U}_q(\mathfrak{g})\rightarrow \mathrm{End}(M)$ the linear representation. Let $P^M_{\eta}\colon M\rightarrow M_{\eta}$ be the projection from $M$ to $M_{\eta}$ and define the following element in $\mathrm{End}(M)\otimes \mathrm{U} _q(\mathfrak{g})$ as
\begin{equation}\label{K_M}
\mathcal{K}_{M}=\mathop{\sum}\limits_{\eta\in\mathrm{wt}(M)}P^M_{\eta}\otimes\bbk_{2\eta}.
\end{equation}
Using the definition of $\phi$, we obtain\begin{equation}
\mathcal{K}_M(\zeta\otimes 1)\big(\phi^2(\Delta(u))\big)=(\zeta\otimes 1)(\Delta(u))\mathcal{K}_M,\quad \forall u\in\mathrm{U}_q(\mathfrak{g}).
\end{equation}
Define $R_M=(\zeta\otimes 1)(\mathfrak{R})$ and $R^{op}_M=(\zeta\otimes 1)(\mathfrak{R}^{op})$, we have 
\begin{align*}
&\mathcal{K}_M\phi(R_M^{op})R_M(\zeta\otimes 1)(\Delta(u))=\mathcal{K}_M(\zeta\otimes 1)\big(\phi(\mathfrak{R}^{op})\mathfrak{R}\Delta(u)\big)\\
=&\mathcal{K}_M(\zeta\otimes 1)\big(\phi^2(\Delta(u))\phi(\mathfrak{R}^{op})\mathfrak{R}\big)=\mathcal{K}_M(\zeta\otimes 1)\big(\phi^2(\Delta(u))\big)\phi(R^{op}_M)R_M \\
=&(\zeta\otimes 1)(\Delta(u)) \mathcal{K}_M\phi(R_M^{op})R_M,      \quad\quad\forall u\in\mathrm{U}_q(\mathfrak{g}).
\end{align*}
If we take 
\begin{equation}\label{Gamma_M}
\Gamma_M=\mathcal{K}_M\phi(R_M^{op})R_M,
\end{equation}
then $[\Gamma_{M},(\zeta\otimes 1)(\Delta(u))]=0$, for all $u\in\mathrm{U}_q(\mathfrak{g})$. 
\begin{example}
Let $\mathrm{U}=\mathrm{U}_q(A(1,0))$ and $\zeta\colon\mathrm{U}\rightarrow \mathrm{End}(M)=\mathrm{End}(L_q(\varepsilon_1))$ be the vector representation. Let $v_1$ be its highest weight vector with weight $\lambda_1$, and let $v_{2}=\bbf_1v_1, v_{3}=\bbf_2\bbf_1v_1$ and $\lambda_{2},\lambda_3$ be the corresponding weights associated with $v_{2},v_3$, respectively. $\{v_1,v_2,v_3\}$ is a basis of $M$. By using of (\ref{skew-pairing}) and (\ref{double}), $\{-(q_i-q_i)^{-1}\bbf_i\}$ and $\{\bbe_i\}$ are two basis-dual basis pairs of $\mathrm{U}^-_{-\alpha_i}$ and $\mathrm{U}^+_{\alpha_i}$ for $i=1,2$ and
\begin{equation*}
\{(q-q^{-1})\bbf_1\bbf_2,(q^{-1}-q)\bbf_2\bbf_1\} \text{ and } \{q\bbe_1\bbe_2-\bbe_2\bbe_1,\bbe_1\bbe_2-q\bbe_2\bbe_1\}
\end{equation*}
is a basis-dual basis pair of $\mathrm{U}^-_{-\alpha_1-\alpha_2}$ and $\mathrm{U}^+_{\alpha_1+\alpha_2}$ with respect to the Drinfeld double. We have $\mathfrak{R}=\overline{\mathop{\sum}\limits_{\mu\geqslant0}\Theta_{\mu}}$, which is a generalization of \cite[Corollary 4.1.3]{Lu}. Then
\begin{equation}\label{R_M}
R_M=(\zeta\otimes 1)(1\otimes 1+\mathop{\sum}\limits_{i=1}^{2}(q_i-q_i^{-1})\bbf_i\otimes \bbe_i-(q^{-1}-q)\bbf_2\bbf_1\otimes (\bbe_1\bbe_2-q^{-1}\bbe_2\bbe_1)-(q-q^{-1})\bbf_1\bbf_2\otimes (q^{-1}\bbe_1\bbe_2-\bbe_2\bbe_1))
\end{equation}
and
\begin{align}\label{R^op_M}
\phi(R^{\mathrm{op}}_M)=&(\zeta\otimes 1)(1\otimes 1+(q^{-1}-q)(\bbe_1\bbe_2-q^{-1}\bbe_2\bbe_1)\bbk_2\bbk_1\otimes \bbk_2^{-1}\bbk_1^{-1}\bbf_2\bbf_1\nonumber\\
+&\mathop{\sum}\limits_{i=1}^{2}(-1)^{\delta_{i2}}(q_i-q_i^{-1})\bbe_i\bbk_i\otimes \bbk_i^{-1}\bbf_i+(q-q^{-1})(q^{-1}\bbe_1\bbe_2-\bbe_2\bbe_1)\bbk_1\bbk_2\otimes \bbk_2^{-1}\bbk_1^{-1}\bbf_1\bbf_2   ).
\end{align}
because $\zeta(\mathrm{U}^-_{-\nu})=0$ if $\nu\neq \alpha_1,\alpha_2,\alpha_1+\alpha_2$.
Substitute (\ref{K_M}), (\ref{R_M}) and (\ref{R^op_M}) into (\ref{Gamma_M}) and (\ref{C^k_M}). As a result,
\begin{align*}
C_M^{(1)}=&\mathrm{Str}_1\big((\zeta\otimes 1)(\bbk_{2\rho}\otimes 1)\mathcal{K}_M\phi(R^{\mathrm{op}}_M)R_M\big)\\
=&\mathop{\sum}\limits_{i=1}^{3}(-1)^{|v_i|}q^{(2\rho,\lambda_i)}\bbk_{2\lambda_i}+\mathop{\sum}\limits_{i=1}^2 (q_i-q_i^{-1})^2(-1)^{|v_i|} q^{(\alpha_i,\lambda_{i+1})+(2\rho,\lambda_i)}\bbk_{2\lambda_{i}}\bbk_i^{-1}\bbf_i\bbe_i\\
+&(q-q^{-1})^2q^{(2\rho,\lambda_1)+(\alpha_1+\alpha_2,\lambda_3)}\bbk_{2\lambda_1}\bbk_2^{-1}\bbk_1^{-1}(\bbf_2\bbf_1-q^{-1}\bbf_1\bbf_2)(\bbe_1\bbe_2-q^{-1}\bbe_2\bbe_1)\\
=&\bbk_2^{-2}+q^{-2}\bbk_1^{-2}\bbk_2^{-2}-q^{-2}\bbk_1^{-2}\bbk_2^{-4}+(q-q^{-1})^2(q^{-1}\bbk_1^{-1}\bbk_2^{-2}\bbf_1\bbe_1+q^{-1}\bbk_1^{-2}\bbk_2^{-3}\bbf_2\bbe_2)\\
+&(q-q^{-1})^2q\bbk_1^{-1}\bbk_2^{-3}(\bbf_2\bbf_1-q^{-1}\bbf_1\bbf_2)(\bbe_1\bbe_2-q^{-1}\bbe_2\bbe_1),
\end{align*}
by using
\begin{align*}
&2\rho=\alpha_1-\alpha_2-(\alpha_1+\alpha_2)=-2\alpha_2;\\
&\lambda_1=\varepsilon_1=-\varepsilon_2+\delta_1=-\alpha_2;\\
&\lambda_2=\varepsilon_2=-\varepsilon_1+\delta_1=-\alpha_1-\alpha_2;\\
&\lambda_3=\delta_1=-\varepsilon_1-\varepsilon_2+2\delta_1=-\alpha_1-2\alpha_2.
\end{align*}
There is a $k$-algebra anti-automorphism $\tau$ of $\mathrm{U}$ defined by $\tau(\bbe_i)=\bbf_i,~\tau(\bbf_i)=\bbe_i,~\tau(\bbk^{\pm1}_i)=\bbk^{\pm1}_i$ for $i=1,2$. It is obvious that $C^{(1)}_M$ commutes with $\bbk_1$ and $\bbk_2$. One can check directly that $C^{(1)}_M$ commutes with $\bbe_1$ and $\bbe_2$. Because $C^{(1)}_M$ is $\tau$-invariant, $C^{(1)}_M$ commutes with $\bbf_1$ and $\bbf_2$. Therefore, $C^{(1)}_M\in\mathcal{Z}(\mathrm{U}_q(\mathfrak{g}))$.
\end{example}
\subsection{Proof of Theorem B}
In the previous subsection, we used the quasi-R-matrix to construct an explicit $\Gamma_M$ associated with a finite-dimensional $\mathrm{U}_q(\mathfrak{g})$-module $M$ satisfying Proposition \ref{Gamma_M}. Thus, we obtained a family of central elements of $\mathrm{U}_q(\mathfrak{g})$. Now, we are ready to prove Theorem $\mathrm{B}$. For convenience, we simplify $C_{L_q(\lambda)}$ for $C^{(1)}_{L_q(\lambda)}$.
\begin{theorem}
$\{C_{L_q(\lambda)} ~|~\lambda\in \Lambda\cap\frac{1}{2}\mathbb{Z}\Phi$ and $L(\lambda)$ finite-dimensional \} is a basis of $\mathcal{Z}(\mathrm{U}_q(\mathfrak{g}))$ if $\mathfrak{g}\neq A(1,1)$.
\end{theorem}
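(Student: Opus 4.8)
\emph{Proof proposal.} The plan is to deduce the statement from Theorem~A. By Proposition~\ref{Psi_R} and the proof of Theorem~A, the map $\Psi_{\mathcal R}\colon k\otimes_{\mathbb Z}K_{\mathrm{ev}}(\mathrm U_q(\mathfrak g))\to\mathcal Z(\mathrm U_q(\mathfrak g))$ carrying $[M]$ to the central element $z_M$ of Lemma~\ref{zlambda} is an isomorphism; together with $\dim L(\lambda)<\infty\Leftrightarrow\dim L_q(\lambda)<\infty$ for $\lambda\in\Lambda\cap\frac{1}{2}\mathbb Z\Phi$, this shows that $\{z_{L_q(\lambda)}\}$, indexed by those $\lambda\in\Lambda\cap\frac{1}{2}\mathbb Z\Phi$ with $L(\lambda)$ finite-dimensional, is a basis of $\mathcal Z(\mathrm U_q(\mathfrak g))$. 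It is therefore enough to identify each $C_{L_q(\lambda)}$ with some $z_{L_q(\lambda^{\vee})}$, where $\lambda\mapsto\lambda^{\vee}$ permutes the index set.

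The key step is to compute $\mathcal{HC}(C_M)$ for a finite-dimensional $M$ with $\mathrm{wt}(M)\subset\Lambda\cap\frac{1}{2}\mathbb Z\Phi$; such $C_M=C_M^{(1)}$ is central (being one of the elements produced above from $\Gamma_M=\mathcal K_M\,\phi(R_M^{\mathrm{op}})\,R_M$) and lies in $\mathrm U_0$ by Corollary~\ref{centerU_0}. I would expand $\Gamma_M$ along the $Q$-grading of its $\mathrm U_q(\mathfrak g)$-leg. Since $\mathfrak R$ equals $1\otimes 1$ plus a sum of terms in $\mathrm U^-_{-\mu}\otimes\mathrm U^+_\mu$ with $\mu>0$, since $\phi$ maps $\mathrm U^+_\mu\otimes\mathrm U^-_{-\mu}$ into $\mathrm U^+_\mu\bbk_\mu\otimes\mathrm U^-_{-\mu}\bbk_{-\mu}$, and since $\mathcal K_M=\sum_\eta P^M_\eta\otimes\bbk_{2\eta}$, the $\mathrm U_q(\mathfrak g)$-leg of $(\zeta(\bbk_{2\rho})\otimes 1)\Gamma_M$ is a sum of terms lying in $\mathrm U^-_{-\mu}\mathrm U^0\mathrm U^+_\nu$ for various $\mu,\nu\geqslant0$, the contribution of ``bidegree'' $(0,0)$ being exactly $(\zeta(\bbk_{2\rho})\otimes1)\mathcal K_M$. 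Taking $\mathrm{Str}_1$ expresses $C_M$ inside $\mathrm U_0=\mathrm U^0\oplus\bigoplus_{\nu>0}\mathrm U^-_{-\nu}\mathrm U^0\mathrm U^+_\nu$, and since $\pi$ kills every $\mathrm U^-_{-\mu}\mathrm U^0\mathrm U^+_\mu$ with $\mu>0$ we obtain
\[
\pi(C_M)=\mathrm{Str}_1\big((\zeta(\bbk_{2\rho})\otimes 1)\mathcal K_M\big)=\sum_\eta q^{(2\rho,\eta)}\,\mathrm{sdim}(M_\eta)\,\bbk_{2\eta},
\]
hence $\mathcal{HC}(C_M)=\gamma_{-\rho}(\pi(C_M))=\sum_\eta\mathrm{sdim}(M_\eta)\,\bbk_{2\eta}$, and, via $\iota(\bbk_\mu)=e^{-\mu/2}$,
\[
\iota\circ\mathcal{HC}(C_M)=\sum_\eta\mathrm{sdim}(M_\eta)\,e^{-\eta}=\mathrm{Sch}([M^{*}]).
\]
This should be compared with $\iota\circ\mathcal{HC}(z_M)=\mathrm{Sch}([M])$ from the proof of Proposition~\ref{Psi_R}. (In the $A(1,0)$ example in the text, the block $\sum_i(-1)^{|v_i|}q^{(2\rho,\lambda_i)}\bbk_{2\lambda_i}$ of $C_M^{(1)}$ is precisely $\pi(C_M)$, the remaining terms forming the $\mu=\nu>0$ part annihilated by $\pi$.)

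Now fix $\lambda$ in the index set. The contragredient $L_q(\lambda)^{*}$ is again a finite-dimensional irreducible $\mathrm U_q(\mathfrak g)$-module of type $\mathbf 1$, and its weights are the negatives of those of $L_q(\lambda)$, hence still lie in $\Lambda\cap\frac{1}{2}\mathbb Z\Phi$; thus $L_q(\lambda)^{*}\cong L_q(\lambda^{\vee})$ for a unique dominant weight $\lambda^{\vee}$ in the same index set, and $\lambda\mapsto\lambda^{\vee}$ is an involution of the index set (apply $(-)^{*}$ twice). By the previous step, $\iota\circ\mathcal{HC}(C_{L_q(\lambda)})=\mathrm{Sch}([L_q(\lambda)^{*}])=\mathrm{Sch}([L_q(\lambda^{\vee})])=\iota\circ\mathcal{HC}(z_{L_q(\lambda^{\vee})})$, and since both $\mathcal{HC}$ (Lemma~\ref{injective}) and $\iota$ are injective, $C_{L_q(\lambda)}=z_{L_q(\lambda^{\vee})}$. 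Consequently $\{C_{L_q(\lambda)}\}$ equals, up to the relabelling $\lambda\mapsto\lambda^{\vee}$, the basis $\{z_{L_q(\lambda)}\}$ of $\mathcal Z(\mathrm U_q(\mathfrak g))$, which proves the theorem; the hypothesis $\mathfrak g\neq A(1,1)$ is used only through Theorem~A.

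The part I expect to require genuine work is the grading bookkeeping underlying the formula for $\pi(C_M)$: one has to verify that in the $\mathrm U_q(\mathfrak g)$-leg the operator $R_M$ is ``upper triangular'' with the identity as leading coefficient, that $\phi(R_M^{\mathrm{op}})$ contributes only terms whose leg lies in $\mathrm U^-_{-\mu}\mathrm U^0$, and that $\mathrm U^-_{-\mu}\mathrm U^0\mathrm U^+_\mu\subset\ker\pi$ for $\mu>0$; once this triangular shape is established, the supertrace identity and the scalar action of $\bbk_{2\rho}$ on the weight spaces of $M$ make the rest of the computation routine. Everything after the formula for $\mathcal{HC}(C_M)$ is formal, given that $\Psi_{\mathcal R}$ is already known to be an isomorphism.
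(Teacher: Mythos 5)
Your proposal is correct and follows essentially the same route as the paper: establish that $\{z_{L_q(\lambda)}\}$ is a basis via the isomorphism $\Psi_{\mathcal R}$, compute $\mathcal{HC}(C_M)$ by observing that the Harish-Chandra projection kills all contributions of $R_M$ and $\phi(R_M^{\mathrm{op}})$ except the leading term so that only $\mathcal K_M$ survives, and then use the injectivity of $\mathcal{HC}$ together with the duality involution $\lambda\mapsto\lambda^{\vee}$ to identify $C_{L_q(\lambda)}=z_{L_q(\lambda)^{*}}$. The paper states the same identity in the form $C_{L_q(\lambda)^{*}}=z_{L_q(\lambda)}$ and is terser about the grading bookkeeping, which you spell out correctly.
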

\begin{proof} Applying the $\mathcal{HC}$ to $C_{L_q(\lambda)^*}$ results in 
\begin{align*}
&\mathcal{HC}\left(C_{L_q(\lambda)^*}\right)=\mathcal{HC}\bigg(\mathrm{Str}_1\Big(\big(\zeta(\bbk_{2\rho})\otimes 1\big)\Gamma_{L_q(\lambda)^*}\Big)\bigg)=\gamma_{-\rho}\circ\pi\bigg(\mathrm{Str}_1\Big(\big(\zeta(\bbk_{2\rho})\otimes 1 \big)\mathcal{K}_{L_q(\lambda)^*}\Big)\bigg)\\
=&\mathop{\sum}\limits_{\eta\in\mathrm{wt}(L_q(\lambda)^*)}\gamma_{-\rho}\left(\mathrm{Str}(q^{(2\rho,\eta)}P^{L_q(\lambda)^*}_{\eta})\bbk_{2\eta}\right)=\mathop{\sum}\limits_{
\mu}\mathrm{sdim}L_q(\lambda)_{\mu}\bbk_{-2\mu}=\mathcal{HC}\left(z_{L_q(\lambda)}\right).
\end{align*}

According to Theorem A (i.e., the $\mathcal{HC}=\gamma_{-\rho}\circ\pi$ is an algebra isomorphism),  $z_{L_q(\lambda)}=C_{L_q(\lambda)^*}$. Furthermore, $\left\{\left.[L_q(\lambda)]\right |\lambda\in\Lambda\cap \frac{1}{2}\mathbb{Z}\Phi\text{ and } L_q(\lambda) \text{ is finite-dimensional}\right\}$ is a basis of $K_{\mathrm{ev}}(\mathrm{U}_q(\mathfrak{g}))$. Hence, $\left\{\left.C_{L_q(\lambda)^*} \right|\lambda\in \Lambda\cap\frac{1}{2}\mathbb{Z}\Phi ~\text{and}~ L_q(\lambda) ~\text{is finite-dimensional}\right\}$ is a basis of $\mathcal{Z}(\mathrm{U}_q(\mathfrak{g}))$. So is $\left\{\left.C_{L_q(\lambda)} \right|\lambda\in \Lambda\cap\frac{1}{2}\mathbb{Z}\Phi ~\text{and}~ L(\lambda) ~\text{is finite-dimensional}\right\}$.
\end{proof}
\begin{remark}
One can define a new quantum superalgebra $\tilde{\mathrm{U}}=\tilde{\mathrm{U}}_q(\mathfrak{g})$ associated with a simple Lie superalgebra $\mathfrak{g}$, except for $A(1,1)$, by replacing the cartan subalgebra of quantum superalgebra $\mathfrak{g}$ with the group ring $k\Gamma$ if $\mathbb{Z}\Phi\subseteq\Gamma\subseteq \Lambda$, $W\Gamma=\Gamma$ and $q^{(\gamma,\lambda)}\in k$ for all $\gamma\in\Gamma, \lambda\in\Lambda$.  Using the same procedure, we can establish the Harish-Chandra isomorphism between $\mathcal{Z}(\tilde{\mathrm{U}})$ and $(\tilde{\mathrm{U}}^0_{\mathrm{ev}})^W_{\mathrm{sup}}$,  where
\begin{align*}
(\tilde{\mathrm{U}}^0_{\mathrm{ev}})^{W}_{\mathrm{sup}}=\Bigg\{\mathop{\sum}\limits_{\mu\in 2\Lambda\cap \Gamma} a_{\mu}\bbk_{\mu}\in\mathrm{U}^0\Bigg|a_{w\mu}=a_{\mu}, ~\forall w\in W; \mathop{\sum}\limits_{\mu \in A^{\alpha}_{\nu}}a_{\mu}=0,~\forall\alpha\in{\Phi}_{\mathrm{iso}}~\text{with} ~(\nu,\alpha)\neq0\Bigg\}.
\end{align*}
In particular, $K(\mathfrak{g})\cong K_{\Lambda}(\tilde{\mathrm{U}})$, where $K_{\Lambda}(\tilde{\mathrm{U}})$ is the subring of $K(\tilde{\mathrm{U}})$ generated by all objects in $\tilde{\mathrm{U}}$-{\bf mod} whose weights are contained in $\Lambda$ if  $\Gamma=\Lambda$.
\end{remark}
\begin{remark}
Our approach to obtaining the Harish-Chandra type theorem for quantum superalgebras of type $\mathrm{A}$-$\mathrm{G}$ takes advantage of the Rosso form, which cannot be applied to quantum queer superalgebra $\mathrm{U}_q(\mathfrak{q}_n)$  \cite{Olsh} or quantum perplectic superalgebra $\mathrm{U}_q(\mathfrak{p}_n)$  \cite{AGG}.  One immediate problem is to establish the Harish-Chandra type theorems for these quantum superalgebras.  We hope to return to these questions in future.
\end{remark}
\appendix
\section{Dynkin diagrams in distinguished root systems}\label{appendix}
The Dynkin diagrams in the distinguished
root systems of a simple basic Lie superalgebra of type A-G are listed below, where $r$ is the number of nodes
and $s$ is the element of $\tau$.
Note that the form of Dynkin diagrams in the distinguished
root systems is quite uniform in the literature.

\medskip
{\bf$\boxed{A(m,n) \text{ case}:}$}
Let $\mathfrak{h}^*$ be a vector space spanned by $\{\varepsilon_i-\varepsilon_{i+1},\varepsilon_{m+1}-\delta_1 ,\delta_j-\delta_{j+1}| 1\leqslant i\leqslant m, 1\leqslant j\leqslant n \}$ satisfies 
$$(\varepsilon_1+\ldots+\varepsilon_{m+1})-(\delta_1+\ldots+\delta_{n+1})=0.$$ 
We equip the dual $\mathfrak{h}^*$ with a bilinear form $(\cdot,\cdot)$ such that  
\begin{align*}
(\varepsilon_i, \varepsilon_j)=\delta_{ij},\quad (\varepsilon_i,\delta_j)=(\delta_j, \varepsilon_i)=0, \quad (\delta_i, \delta_j)=-\delta_{ij}\quad \text{for all possible}~i,j.
\end{align*}
The distinguished fundamental system $\Pi=\{\alpha_1,\ \ldots,\ \alpha_{m+n+1}\}$ is given by 
$$\{\varepsilon_1-\varepsilon_2, \  \ldots,\ \varepsilon_m,-\varepsilon_{m+1}, \  \varepsilon_{m+1}-\delta_1, \  \delta_1-\delta_2,\ \ldots,\ \delta_{n}-\delta_{n+1}\}.$$
The Dynkin diagram associated with $\Pi$ is depicted as follows:

\begin{center}
\setlength{\unitlength}{0.35mm}
\begin{picture}(300, 20)(-5, 0)
\put(30, 10){\circle{10}}
\put(16, 0){\scriptsize $\varepsilon_1-\varepsilon_2$}
\put(35, 10){\line(1, 0){30}}
\put(70, 10){\circle{10}}
\put(56,0){\scriptsize $\varepsilon_2-\varepsilon_3$}
\put(75,10){\line(1,0){17}}
\put(93,10){$\ldots$}
\put(105,10){\line(1,0){17}}
\put(127,10){\circle{10}}
\put(113, 0){\scriptsize $\varepsilon_m-\varepsilon_{m+1}$}
\put(132,10){\line(1, 0){27}}
\put(165,10){\makebox(0,0)[c]{$\bigotimes$}}
\put(148,20){\scriptsize $\varepsilon_{m+1}-\delta_1$}
\put(170,10){\line(1, 0){30}}
\put(205,10){\circle{10}}
\put(191,0){\scriptsize $\delta_{1}-\delta_2$}
\put(210, 10){\line(1, 0){17}}
\put(228,10){$\ldots$}
\put(240, 10){\line(1, 0){17}}
\put(262, 10){\circle{10}}
\put(248,0){\scriptsize $\delta_{n}-\delta_{n+1}$.}
\end{picture}
\end{center}
In this case $r=m+n+1$, $s=m+1$. The distinguished positive system $\Phi^+=\Phi_{\bar{0}}^+\cup\Phi_{\bar{1}}^+$ corresponding to the distinguished Borel subalgebra for $A(m,n)$ is 
\begin{align*}
\{\varepsilon_i-\varepsilon_j, \delta_k-\delta_l| 1\leqslant i<j\leqslant m+1, 1\leqslant k<l\leqslant n+1\}
\cup\{\varepsilon_i-\delta_j| 1\leqslant i\leqslant m+1, 1\leqslant j\leqslant n+1\}.
\end{align*}
The Weyl group $W\cong\mathfrak{S}_{m+1}\times\mathfrak{S}_{n+1}$.

{\bf$\boxed{B(m,n) \text{ case:}}$}
Let $\mathfrak{h}^*$ be a vector space with basis $\{\varepsilon_i, \delta_j| 1\leqslant i\leqslant m, 1\leqslant j\leqslant n\}$. We equip the dual $\mathfrak{h}^*$ with a bilinear form $(\cdot,\cdot)$ such that  
\begin{align*}
(\varepsilon_i, \varepsilon_j)=\delta_{ij},\quad (\varepsilon_i,\delta_j)=(\delta_j, \varepsilon_i)=0, \quad (\delta_i, \delta_j)=-\delta_{ij}\quad\text{for all possible}~i,j.
\end{align*}
The distinguished fundamental system $\Pi=\{\alpha_1,\ \ldots,\ \alpha_{m+n}\}$ is given by 
$$\{\delta_1-\delta_2, \ \ldots,\ \delta_{n-1}-\delta_{n},\  \delta_{n}-\varepsilon_1,  \varepsilon_{1}-\varepsilon_2,\ \ldots,\ \varepsilon_{m-1}-\varepsilon_{m}, \ \varepsilon_m\}.$$
The Dynkin diagram associated with $\Pi$ is depicted as follows:

\begin{picture}(400, 30)(0, 0)
\put(110, 10){\circle{10}}
\put(98, 0){\scriptsize $\delta_1-\delta_2$}
\put(115, 10){\line(1, 0){10}}
\put(125,
10){...} \put(135, 10){\line(1, 0){10}}
\put(150, 10){\circle{10}}
\put(135, 0){\scriptsize $\delta_{n-1}-\delta_n$}
\put(155, 10){\line(1, 0){20}}
\put(180,10){\makebox(0,0)[c]{$\bigotimes$}}
\put(165, 20){\scriptsize $\delta_{n}-\varepsilon_1$}
\put(185, 10){\line(1, 0){20}}
\put(210, 10){\circle{10}}
\put(194, 0){\scriptsize $\varepsilon_1-\varepsilon_2$}
\put(215, 10){\line(1, 0){10}}
\put(225, 10){...}
\put(235, 10){\line(1, 0){10}}
\put(250, 10){\circle{10}}
\put(230, 0){\scriptsize $\varepsilon_{m-1}-\varepsilon_m$}
\put(255, 9){\line(1, 0){20}}
\put(255, 11){\line(1, 0){20}}
\put(262, 7.5){$>$}
\put(280, 10){\circle{10}}
\put(278, 0){\scriptsize $\varepsilon_m$.}
\end{picture}

In this case $r=m+n$,  $s=n+1$. The distinguished positive system $\Phi^+=\Phi^+_{\bar{0}}\cup\Phi_{\bar{1}}^+$ corresponding to the distinguished Borel subalgebra is 
\begin{align*}
\{\delta_{i}\pm\delta_j,\  2\delta_p,\  \varepsilon_k\pm\varepsilon_l, \  \varepsilon_q\}\cup\{\delta_p\pm\varepsilon_q,\  \delta_p\},
\end{align*}
where $1\leqslant i<j\leqslant n,  1\leqslant k<l\leqslant m,   1\leqslant p\leqslant n,  1\leqslant q\leqslant m$. The Weyl group $W\cong(\mathfrak{S}_{n}\ltimes\mathbb{Z}_2^n)\times(\mathfrak{S}_{m}\ltimes\mathbb{Z}_2^m)$.

{\bf $\boxed{B(0,n) \text{ case:}}$}
Let $\mathfrak{h}^*$ be a vector space with basis $\{\delta_i|1\leqslant i\leqslant n \}$. We equip the dual $\mathfrak{h}^*$ with a bilinear form $(\cdot,\cdot)$ such that  
\begin{align*}
(\delta_i, \delta_j)=-\delta_{ij}\quad\text{for all possible}~i,j.
\end{align*}
The distinguished fundamental system $\Pi=\{\alpha_1,\ \ldots,\ \alpha_{n}\}$ is given by 
$$\{\delta_1-\delta_2, \ \ldots,\ \delta_{n-1}-\delta_{n},\  \delta_{n}\}.$$
The Dynkin diagram associated with $\Pi$ is depicted as follows:

\begin{picture}(400, 30)(0, 0)
\setlength{\unitlength}{0.4mm}
\put(110, 10){\circle{10}}
\put(98, 0){\scriptsize $\delta_1-\delta_2$}
\put(115, 10){\line(1, 0){20}}
\put(140,10){\circle{10}}
\put(128, 0){\scriptsize $\delta_2-\delta_3$}
\put(145, 10){\line(1, 0){10}}
\put(155, 10){...}
\put(165, 10){\line(1, 0){10}}
\put(180, 10){\circle{10}}
\put(165, 0){\scriptsize $\delta_{n-1}-\delta_n$}
\put(185, 9){\line(1, 0){20}}
\put(185, 11){\line(1, 0){20}}
\put(193, 7.7){$>$}
\put(210, 10){\circle*{10}}
\put(208, 0){\scriptsize $\delta_n$.}
\end{picture}

 In this case, $r=s=n$. The distinguished positive system $\Phi^+=\Phi^+_{\bar{0}}\cup\Phi_{\bar{1}}^+$ corresponding to the distinguished Borel subalgebra is 
\begin{align*}
\{\delta_{i}\pm\delta_j,\  2\delta_p|1\leqslant i<j\leqslant n, 1\leqslant p\leqslant n\}\cup\{\delta_p|1\leqslant p\leqslant n\}.
\end{align*}
The Weyl group $W\cong(\mathfrak{S}_{n}\ltimes\mathbb{Z}_2^n)$.

{\bf $\boxed{C(n+1) \text{ case:}} $}
Let $\mathfrak{h}^*$ be a vector space with basis $\{\varepsilon,\delta_i|1\leqslant i\leqslant n \}$. 
We equip the dual $\mathfrak{h}^*$ with a bilinear form $(\cdot,\cdot)$ such that  
\begin{align*}
(\varepsilon,\varepsilon)=1,\quad(\varepsilon,\delta_i)=(\delta_i, \varepsilon)=0,\quad(\delta_i, \delta_j)=-\delta_{ij}\quad\text{for all possible}~i,j.
\end{align*}
The distinguished fundamental system $\Pi=\{\alpha_1,\ \ldots,\ \alpha_{n+1}\}$ is given by 
$$\{\varepsilon-\delta_1,\delta_1-\delta_2, \ \ldots,\ \delta_{n-1}-\delta_{n},\  2\delta_{n}\}.$$
The Dynkin diagram associated with $\Pi$ is depicted as follows:

\begin{picture}(400, 30)(0, 0)
\put(110,10){\makebox(0,0)[c]{$\bigotimes$}}
\put(98, 0){\scriptsize $\varepsilon-\delta_1$}
\put(115, 10){\line(1, 0){20}}
\put(140, 10){\circle{10}}
\put(128, 0){\scriptsize $\delta_1-\delta_2$}
\put(145, 10){\line(1, 0){10}}
\put(155, 10){...}
\put(165, 10){\line(1, 0){10}}
\put(180, 10){\circle{10}}
\put(163, 0){\scriptsize $\delta_{n-2}-\delta_{n-1}$}
\put(185, 10){\line(1, 0){20}}
\put(210, 10){\circle{10}}
\put(195, 20){\scriptsize $\delta_{n-1}-\delta_n$}
\put(215, 9){\line(1, 0){20}}
\put(215, 11){\line(1, 0){20}}
\put(222, 7.5){$<$}
\put(240, 10){\circle{10}}
\put(237, -1){\scriptsize $2\delta_n$.}
\end{picture}

In this case $r=n+1,s=1$. The distinguished positive system $\Phi^+=\Phi^+_{\bar{0}}\cup\Phi_{\bar{1}}^+$ corresponding to the distinguished Borel subalgebra is 
\begin{align*}
\{\delta_{i}\pm\delta_j,\  2\delta_p|1\leqslant i<j\leqslant n, 1\leqslant p\leqslant n\}\cup\{\varepsilon\pm\delta_p|1\leqslant p\leqslant n\}.
\end{align*}
The Weyl group $W\cong(\mathfrak{S}_{n}\ltimes\mathbb{Z}_2^n)$.

{\bf $\boxed{D(m,n) \text{ case:}} $}
Let $\mathfrak{h}^*$ be a vector space with basis $\{\varepsilon_i, \delta_j| 1\leqslant i\leqslant m, 1\leqslant j\leqslant n\}$. We equip the dual $\mathfrak{h}^*$ with a bilinear form $(\cdot,\cdot)$ such that  
\begin{align*}
(\varepsilon_i, \varepsilon_j)=\delta_{ij},\quad (\varepsilon_i,\delta_j)=(\delta_j, \varepsilon_i)=0, \quad (\delta_i, \delta_j)=-\delta_{ij}\quad\text{for all possible}~i,j.
\end{align*}
The distinguished fundamental system $\Pi=\{\alpha_1,\ \ldots,\ \alpha_{m+n}\}$ is given by 
$$\{\delta_1-\delta_2, \ \ldots,\ \delta_{n-1}-\delta_{n},\  \delta_{n}-\varepsilon_1,  \varepsilon_{1}-\varepsilon_2,\ \ldots,\ \varepsilon_{m-1}-\varepsilon_{m}, \ \varepsilon_{m-1}+\varepsilon_m\}.$$
The Dynkin diagram associated with $\Pi$ is depicted as follows:
\begin{center}
\begin{picture}(400, 30)(0, 0)
\put(110, 10){\circle{10}}
\put(98, 0){\scriptsize $\delta_1-\delta_2$}
\put(115, 10){\line(1, 0){10}}
\put(125,
10){...}
\put(135, 10){\line(1, 0){10}}
\put(150, 10){\circle{10}}
\put(135, 0){\scriptsize $\delta_{n-1}-\delta_n$}
\put(155, 10){\line(1, 0){20}}
\put(180,10){\makebox(0,0)[c]{$\bigotimes$}}
\put(165, 20){\scriptsize $\delta_{n}-\varepsilon_1$}
\put(185, 10){\line(1, 0){20}}
\put(210, 10){\circle{10}}
\put(197, 20){\scriptsize $\varepsilon_1-\varepsilon_2$}
\put(215, 10){\line(1, 0){10}}
\put(225, 10){...}
\put(235, 10){\line(1, 0){10}}
\put(250, 10){\circle{10}}
\put(220, -1){\scriptsize $\varepsilon_{m-2}-\varepsilon_{m-1}$}
\put(255, 10){\line(2, 1){20}}
\put(255, 10){\line(2, -1){20}}
\put(280, 20){\circle{10}}
\put(265, 30){\scriptsize $\varepsilon_{m-1}-\varepsilon_m$}
\put(280, 0){\circle{10}}
\put(265, -10){\scriptsize $\varepsilon_{m-1}+\varepsilon_m$.}
\end{picture}
\end{center}
~\\
In this case $r=m+n$,  $s=n+1$. The distinguished positive system $\Phi^+=\Phi^+_{\bar{0}}\cap\Phi_{\bar{1}}^+$ corresponding to the distinguished Borel subalgebra is 
\begin{align*}
\{\delta_{i}\pm\delta_j,\  2\delta_p,\  \varepsilon_k\pm\varepsilon_l, \}\cup\{\delta_p\pm\varepsilon_q\},
\end{align*}
where $1\leqslant i<j\leqslant n,  1\leqslant k<l\leqslant m,   1\leqslant p\leqslant n,  1\leqslant q\leqslant m$. The Weyl group $W\cong(\mathfrak{S}_{n}\ltimes\mathbb{Z}_2^n)\times(\mathfrak{S}_{m}\ltimes\mathbb{Z}_2^{m-1} )$.
\\

{\bf$\boxed{D(2,1;\alpha) \text{ case}:}$}
Let $\mathfrak{h}^*$ be a vector space with basis  $\{\varepsilon_1, \varepsilon_2,\varepsilon_3\}$. We equip the dual $\mathfrak{h}^*$ with a bilinear form $(\cdot,\cdot)$ with 
$$(\varepsilon_1, \varepsilon_1)=-(1+\alpha),\quad (\varepsilon_2, \varepsilon_2)=1, \quad (\varepsilon_3, \varepsilon_3)=\alpha \quad\text{and}\quad (\varepsilon_i,\varepsilon_j)=0\quad\text{for all}~\quad 1\leqslant i\neq j\leqslant 3.$$
The distinguished fundamental system 
$$\Pi=\{\alpha_1=\varepsilon_1+\varepsilon_2+\varepsilon_3,  \alpha_2=-2\varepsilon_2,  \alpha_3=-2\varepsilon_3\}.$$
The Dynkin diagram associated with $\Pi$ is depicted as follows:
\begin{center}
	\setlength{\unitlength}{0.16in}
	 \begin{picture}(4,6)
	\put(4,1.8){\makebox(0,0)[c]{$\bigcirc$}}
	\put(4,4.2){\makebox(0,0)[c]{$\bigcirc$}}
	\put(1.5,3){\makebox(0,0)[c]{$\bigotimes$}}
	\put(3.6,1.8){\line(-2,1){1.8}}
	\put(3.6,4.2){\line(-2,-1){1.8}}
	\put(5.5,4.3){\makebox(0,0)[c]{$-2\varepsilon_2$}}
	\put(5.5,1.7){\makebox(0,0)[c]{ $-2\varepsilon_3$.}}
	\put(-1.5,3.3){\makebox(0,0)[c]{$\varepsilon_1+\varepsilon_2+\varepsilon_3$}}
	\end{picture}
\end{center}
In this case $r=3$, $s=1$. The distinguished positive system $\Phi^+=\Phi^+_{\bar{0}}\cap\Phi_{\bar{1}}^+$ corresponding to the distinguished Borel subalgebra is
\begin{align*}
\Phi^+_{\bar 0}=\{2\varepsilon_1,-2\varepsilon_2,-2\varepsilon_3\},\quad\Phi^+_{\bar 1}=\{\varepsilon_1\pm\varepsilon_2\pm\varepsilon_3\}.
\end{align*}
The Weyl group $W\cong\mathbb{Z}_2^3$. 

{\bf$\boxed{F(4) \text{ case}:}$}
Let $\mathfrak{h}^*$ be a vector space with basis $\{\delta, \varepsilon_1, \varepsilon_2, \varepsilon_3\}$.We equip the dual $\mathfrak{h}^*$ with a bilinear form $(\cdot,\cdot)$ such that  
$$(\delta, \delta)=-3,\quad (\varepsilon_i,\delta)=(\delta, \varepsilon_i)=0, \quad (\varepsilon_i, \varepsilon_j)=\delta_{ij}\quad\text{for all}~i .$$
The distinguished fundamental system 
$$\Pi=\left\{\alpha_1=\frac{1}{2}(\delta-\varepsilon_1-\varepsilon_2-\varepsilon_3), \quad\alpha_2=\varepsilon_3, \quad \alpha_3=\varepsilon_2-\varepsilon_3, \quad \alpha_4=\varepsilon_1-\varepsilon_2\right\}.$$
The Dynkin diagram associated with $\Pi$ is depicted as follows:

\setlength{\unitlength}{0.40mm}
\begin{picture}(400, 30)(0, 0)
\put(140,10){\makebox(0,0)[c]{$\bigotimes$}}
\put(110, 20){\scriptsize $\frac{1}{2}(\delta-\varepsilon_1-\varepsilon_2-\varepsilon_3)$}
\put(145, 10){\line(1, 0){20}}
\put(170, 10){\circle{10}}
\put(168, 0){\scriptsize $\varepsilon_3$}
\put(175, 9){\line(1, 0){20}}
\put(175, 11){\line(1, 0){20}}
\put(182, 7.5){$<$}
\put(200, 10){\circle{10}}
\put(188, 0){\scriptsize $\varepsilon_2-\varepsilon_3$}
\put(205, 10){\line(1, 0){20}}
\put(230, 10){\circle{10}}
\put(218, 0){\scriptsize $\varepsilon_1-\varepsilon_2$.}
\end{picture}

In this case $r=4$, $s=1$. The distinguished positive system $\Phi^+=\Phi^+_{\bar{0}}\cap\Phi_{\bar{1}}^+$ corresponding to the distinguished Borel subalgebra is 
\begin{align*}
\{ \delta,\ \varepsilon_p,\ \varepsilon_i\pm\varepsilon_j |1\leqslant i<j\leqslant 3,1\leqslant p\leqslant 3\}\cup\left\{\frac{1}{2}(\delta\pm\varepsilon_1\pm\varepsilon_2\pm\varepsilon_3)\right\},
\end{align*}
The Weyl group $W=\mathbb{Z}_2\times (\mathfrak{S}_3\ltimes \mathbb{Z}_2^3)$.

{\bf$\boxed{G(3) \text{ case}:}$}
Let $\mathfrak{h}^*$ be a vector space with basis $\{\delta, \varepsilon_1, \varepsilon_2\}$ and $\varepsilon_3=-\varepsilon_1-\varepsilon_2$. We equip the dual $\mathfrak{h}^*$ with a bilinear form $(\cdot,\cdot)$ such that  
$$(\delta, \delta)=-(\varepsilon_i, \varepsilon_i)=-2,\quad (\varepsilon_i,\delta)=(\delta, \varepsilon_i)=0, \quad (\varepsilon_i, \varepsilon_j)=-1,\quad\text{for all}~1\leqslant i\ne j\leqslant 3.$$
The distinguished fundamental system 
$$\Pi=\{\alpha_1=\delta+\varepsilon_3, \alpha_2=\varepsilon_1, \alpha_3=\varepsilon_2-\varepsilon_1\}.$$
The Dynkin diagram associated with $\Pi$ is depicted as follows:

\setlength{\unitlength}{0.40mm}
\begin{picture}(400, 30)(0, 0)
\put(150,10){\makebox(0,0)[c]{$\bigotimes$}}
\put(138, 20){\scriptsize $\delta+\varepsilon_3$}
\put(155, 10){\line(1, 0){20}}
\put(180, 10){\circle{10}}
\put(178, 0){\scriptsize $\varepsilon_1$}
\put(192, 7.5){$<$}
\put(185, 12){\line(1, 0){20}}
\put(185, 10){\line(1, 0){20}}
\put(185, 8){\line(1, 0){20}}
\put(210, 10){\circle{10}}
\put(198, 0){\scriptsize $\varepsilon_2-\varepsilon_1$.}
\end{picture}

%
%
In this case $r=3$, $s=1$. The distinguished positive system $\Phi^+=\Phi^+_{\bar{0}}\cap\Phi_{\bar{1}}^+$ corresponding to the distinguished Borel subalgebra is 
\begin{align*}
\{ 2\delta,\ \varepsilon_1,\ \varepsilon_2,\ \varepsilon_2\pm\varepsilon_1,\ \varepsilon_1-\varepsilon_3,\ \varepsilon_2-\varepsilon_3 \}\cup\{\delta,\ \delta\pm\varepsilon_i|i=1,2,3\},
\end{align*}
The Weyl group $W=\mathbb{Z}_2\times D_6$, where $D_6$ is the dihedral group of order $12$. 
\section{Explicit description of the rings $J_{\mathrm{ev}}(\mathfrak{g})$}\label{appendixB}
Now we give the explicit description of the rings $J_{\mathrm{ev}}(\mathfrak{g})$ for quantum superalgebras, which is inspired by Sergeev and Veselov's description for Lie superalgebras  \cite[Section 7, 8]{SerVes}. Let $x_i=\bbk_{-\varepsilon_i/2}$ and $y_j=\bbk_{-\delta_j/2}$  formally. First we need to review the rings $J(\mathfrak{g})$ for $\mathfrak{g}$ is of type $A$. Let
\begin{align*}
P_0=\bigg\{ \mathop{\sum}\limits_{i=1}^{m+1}a_i\varepsilon_i+\mathop{\sum}\limits_{j=1}^{n+1}b_j\delta_j\bigg | a_i,b_j\in\mathbb{C}~\text{and}~a_i-a_{i+1},b_j-b_{j+1}\in\mathbb{Z}, ~ \forall i,j      \bigg\}\bigg/\mathbb{C}\gamma
\end{align*}
be the weights of $\mathfrak{sl}_{m+1|n+1}$, where $\gamma=\varepsilon_1+\cdots+\varepsilon_{m+1}-\delta_1-\cdots-\delta_{n+1}$ and $x_i=e^{\varepsilon_i}, y_j=e^{\delta_j}$ for all possible $i,j$ be the elements of the group ring of $\mathbb{C}[P_0]$. For convenience, we set $\mathbb{C}[x^{\pm},y^{\pm}]=\mathbb{C}[x_1^{\pm1},\cdots,x_{m+1}^{\pm1},y_1^{\pm1},\cdots,y_{n+1}^{\pm1}],~ \mathbb{Z}[x^{\pm},y^{\pm}]=\mathbb{Z}[x_1^{\pm1},\cdots,x_{m+1}^{\pm1},y_1^{\pm1},\cdots,y_{n+1}^{\pm1}]$ and then for $(m,n)\neq (1,1)$
\begin{align*}
J(\mathfrak{sl}_{m+1|n+1})&=\left\{  f\in\mathbb{Z}[P_0]^W\left | y_j\frac{\partial f}{\partial y_j}+x_i\frac{\partial f}{\partial x_i}\in(x_i-y_j)    \right.\right\}\\
&=\mathop{\bigoplus}\limits_{a\in\mathbb{C}/\mathbb{Z}}J(\mathfrak{sl}_{m+1|n+1})_{a},
\end{align*}
where
\begin{align*}
J(\mathfrak{sl}_{m+1|n+1})_{a}=(x_1\cdots x_{m+1})^a\mathop{\prod}\limits_{i,p}\left(1-\frac{x_i}{y_p}\right)\mathbb{Z}[x^{\pm1},y^{\pm1}]_0^{\mathfrak{S}_{m+1}\times \mathfrak{S}_{n+1}}
\end{align*}
if $a\notin \mathbb{Z}$;
\begin{align*}
J(\mathfrak{sl}_{m+1|n+1})_0=\left\{f\in\mathbb{Z}[x^{\pm1},y^{\pm1}]_0^{\mathfrak{S}_{m+1}\times \mathfrak{S}_{n+1}}\left| x_i\frac{\partial f}{\partial x_i}+y_j\frac{\partial f}{\partial y_j}\in (x_i-y_j) \right.\right\}
\end{align*}
and $\mathbb{Z}[x^{\pm1},y^{\pm1}]_0^{\mathfrak{S}_{m+1}\times \mathfrak{S}_{n+1}}$ is the quotient of the ring $\mathbb{Z}[x^{\pm1},y^{\pm1}]^{\mathfrak{S}_{m+1}\times \mathfrak{S}_{n+1}}$ by the ideal generated by $x_1\cdots x_{m+1}-y_1\cdots y_{n+1}$.

$J(A(n,n))=\mathop{\oplus}\limits_{i=0}^{n}J(A(n,n))_i$ for $n\neq 1$, where for $i\neq 0$
\begin{align*}
J(A(n,n))_i=\bigg\{ f=(x_1\cdots x_{n+1})^{\frac{i}{n+1}}\mathop{\prod}\limits_{j,p}^{n+1}\left(1-\frac{x_j}{y_p}\right)g\bigg | g\in\mathbb{Z}[x^{\pm1},y^{\pm1}]_0^{\mathfrak{S}_{n+1}\times \mathfrak{S}_{n+1}},~ \mathrm{deg}~g=-i    \bigg\}
\end{align*}
and $J(A(n,n))_0$ is the subring of $J(\mathfrak{sl}_{n+1|n+1})_0$ consisting of elements of degree $0$.

$J(A(1,1))=\{c+(u-v)^2g |c\in\mathbb{Z}, g\in\mathbb{Z}[u,v] \}$ where $u=\left(\frac{x_1}{x_2}\right)^{\frac{1}{2}}+\left(\frac{x_2}{x_1}\right)^{\frac{1}{2}},~ v=\left(\frac{y_1}{y_2}\right)^{\frac{1}{2}}+\left(\frac{y_2}{y_1}\right)^{\frac{1}{2}}$.

{\bf$\boxed{A(m,n), m\ne n \text{ case:}}$} Define

\begin{align*}
J^{m|n}=\left\{ f\in \mathbb{C}[x^{\pm1}, y^{\pm1}]^{\mathfrak{S}_{m+1}\times\mathfrak{S}_{n+1}}\left|x_i\frac{\partial f}{\partial x_i}+ y_j\frac{\partial f}{\partial y_j}\in(x_i-y_j)\right. \right\}
\end{align*}
and
\begin{align*}
J^{m|n}_{k}=\left\{\left. f\in J^{m|n}\right|\mathrm{deg}~f=k   \right\}.
\end{align*}
Thus, $J^{m|n}=\bigoplus\limits_{k\in\mathbb{Z}}J^{m|n}_{k}$.

For any element $\lambda\in\mathfrak{h}^*$, we write $\lambda=\mathop{\sum}\limits_{i=1}^{m+1}a_i\varepsilon_i+\mathop{\sum}\limits_{j=1}^{n+1}b_j\delta_j$, then we have 
\begin{align*}
&\mathbb{Z}\Phi=\bigg\{ \lambda\in\mathfrak{h}^* \bigg| a_i,b_j\in\mathbb{Z},~\forall i,j \text{ and }  \mathop{\sum}\limits_{i=1}^{m+1}a_i+\mathop{\sum}\limits_{j=1}^{n+1}b_j=0     \bigg\},
\end{align*}
and
\begin{align*}
&\Lambda=\bigg\{  \lambda\in\mathfrak{h}^* \bigg| a_i,b_j\in\mathbb{Q},~ a_i-a_{i+1},b_j-b_{j+1}\in\mathbb{Z},  ~\forall i\leqslant m,j\leqslant n   ~\text{and}~  \mathop{\sum}\limits_{i=1}^{m+1}a_i+\mathop{\sum}\limits_{j=1}^{n+1}b_j=0   \bigg\}.
\end{align*}
By direct computation, we know that 
$$
2\Lambda\cap \mathbb{Z}\Phi=\begin{cases}
2\mathbb{Z}\Phi+\mathbb{Z}\left(\mathop{\sum}\limits_{i=1}^{m+1}(-1)^{i+1}\varepsilon_i+\mathop{\sum}\limits_{j=1}^{n+1}(-1)^j\delta_j\right), & \text{ if } m=2k,n=2l,\\
2\mathbb{Z}\Phi+\mathbb{Z}\mathop{\sum}\limits_{j=1}^{n+1}(-1)^j\delta_j, &\text{ if } m=2k,n=2l+1,\\
2\mathbb{Z}\Phi+\mathbb{Z}\mathop{\sum}\limits_{i=1}^{m+1}(-1)^{i+1}\varepsilon_i, & \text{ if } m=2k+1,n=2l,\\
2\mathbb{Z}\Phi+\mathbb{Z}\mathop{\sum}\limits_{i=1}^{m+1}(-1)^{i+1}\varepsilon_i+\mathbb{Z}\mathop{\sum}\limits_{j=1}^{n+1}(-1)^j\delta_j, & \text{ if } m=2k+1,n=2l+1,
\end{cases}
$$
for some non-negative integers $k,l$.
Then the algebra 
$$J_{\mathrm{ev}}(\mathfrak{g})=\begin{cases}
J^{m|n}_0\oplus \mathop{\prod}\limits_{i}x_i^{\frac{1}{2}}\mathop{\prod}\limits_{j}y_j^{\frac{1}{2}}J^{m|n}_{-(k+l+1)},& \text{if} ~m=2k,n=2l,\\
J^{m|n}_0\oplus \mathop{\prod}\limits_{j}y_j^{\frac{1}{2}}J^{m|n}_{-(l+1)},&\text{if} ~m=2k,n=2l+1,\\
J^{m|n}_0\oplus \mathop{\prod}\limits_{i}x_i^{\frac{1}{2}}J^{m|n}_{-(k+1)}, &\text{if} ~m=2k+1,n=2l,\\

J^{m|n}_0\oplus \mathop{\prod}\limits_{i}x_i^{\frac{1}{2}}J^{m|n}_{-(k+1)}\oplus \mathop{\prod}\limits_{j}y_j^{\frac{1}{2}}J^{m|n}_{-(l+1)}\oplus \mathop{\prod}\limits_{i}x_i^{\frac{1}{2}}\mathop{\prod}\limits_{j}y_j^{\frac{1}{2}}J^{m|n}_{-(k+l+2)},   &\text{if} ~m=2k+1,n=2l+1.
\end{cases}
$$
for some non-negative integers $k,l$. So it can be viewed as a subalgebra of $k\otimes_{\mathbb{Z}}J(\mathfrak{g})$ by $\iota\colon J_{\mathrm{ev}}(\mathfrak{g})\to k\otimes_{\mathbb{Z}}J(\mathfrak{g})$ with $\bbk_i\mapsto e^{-\alpha_i/2}$ and its image is coincide with $k\otimes \mathrm{Sch}(K_{\mathrm{ev}}(\mathfrak{g}))$.

{\bf$\boxed{A(n,n)~~ (n\neq 1)\text{ case:}}$}
In this case,  we set 
\begin{align*}
J(n)_0&=\left\{ f\in \mathbb{C}[x^{\pm1}, y^{\pm1}]_{0,0}^{\mathfrak{S}_{n+1}\times\mathfrak{S}_{n+1}}\left|x_i\frac{\partial f}{\partial x_i}+ y_j\frac{\partial f}{\partial y_j}\in(x_i-y_j) \right.\right\}
\end{align*}
where $\mathbb{C}[x^{\pm1},y^{\pm1}]_{0,0}$ is the quotient of the ring $\mathbb{C}[x^{\pm1},y^{\pm1}]$ with degree $0$ by the ideal $I=\left\langle\frac{x_1\cdots x_{n+1}}{y_1\cdots y_{n+1}}-1\right\rangle$. Then we have 
\begin{align*}
J_{\mathrm{ev}}(\mathfrak{g})=\begin{cases}
J(n)_0   &~\text{if} ~ n ~\text{ is even},\\
J(n)_0\oplus \bigg\{\overrightarrow{x}^{\frac{1}{2}}\mathop{\prod}\limits_{j,p}\left(1-\frac{x_j}{y_p}\right)g+I\bigg|g\in \mathbb{C}[x^{\pm1},y^{\pm1}]^{W},~\mathrm{deg}~g=-\frac{n+1}{2}\bigg\}  &~\text{if} ~ n ~\text{ is odd},
\end{cases}
\end{align*}
where $\overrightarrow{x}=x_1x_2\cdots x_{n+1}$ and $W=\mathfrak{S}_{n+1}\times\mathfrak{S}_{n+1}$. It can be viewed as a subalgebra by $\iota\colon J_{\mathrm{ev}}(\mathfrak{g})\to k\otimes_{\mathbb{Z}}J(\mathfrak{g})$ with $\bbk_i\mapsto e^{-\alpha_i/2}$ and its image is coincide with $k\otimes \mathrm{Sch}(K_{\mathrm{ev}}(\mathfrak{g}))$.

{\bf$\boxed{A(1,1)  \text{ case:}}$}
We have $J_{\mathrm{ev}}(A(1,1))=\left\{c+(u-v)g\left| g\in\mathbb{C}[u,v]\right.\right\}$ where $u=\left(\frac{x_1}{x_2}\right)^{\frac{1}{2}}+\left(\frac{x_2}{x_1}\right)^{\frac{1}{2}},~ v=\left(\frac{y_1}{y_2}\right)^{\frac{1}{2}}+\left(\frac{y_2}{y_1}\right)^{\frac{1}{2}}$. And $u-v=\bbk_{1}+\bbk^{-1}_{1}-\bbk_{3}-\bbk_{3}^{-1}\in J_{\mathrm{ev}}(A(1,1))$, but $u-v\notin J(A(1,1))$.

{\bf$\boxed{B(m,n),  m, n>0 \text{ case:}}$} We set $\lambda=\mathop{\sum}\limits_{i=1}^{m}\lambda_i\varepsilon_i+\mathop{\sum}\limits_{j=1}^{n} \mu_j\delta_j\in\mathfrak{h}^*$, then in this case
\begin{align*}
\mathbb{Z}\Phi=\left\{\lambda\in\mathfrak{h}^*\left|\lambda_i, \mu_j\in\mathbb{Z}, ~\forall i,j               \right.\right\}~\text{ and }~
\Lambda=\left\{ \lambda\in\mathfrak{h}^*\left|\mu_j\in\mathbb{Z},~\forall j  \text{ and all }\lambda_i\in\mathbb{Z} \text{ or all } \lambda_i\in\mathbb{Z}+\frac{1}{2}              \right.\right\}.
\end{align*}
So $2\Lambda\cap \mathbb{Z}\Phi=2\Lambda$. Let $u_i=x_i+x_i^{-1}$ and $v_j=y_j+y_j^{-1}$ for all possible $i,j$, then we have $J_{\mathrm{ev}}(\mathfrak{g})=J(\mathfrak{g})_0\oplus J(\mathfrak{g})_{1/2}$, where
\begin{align*}
J(\mathfrak{g})_0=\left\{  f\in\mathbb{C}[u_1,\cdots,u_m,v_1,\cdots,v_n]^{\mathfrak{S}_m\times\mathfrak{S}_n}\left|u_i\frac{\partial f}{\partial u_i}+v_j\frac{\partial f}{\partial v_j}\in(u_i-v_j)                  \right.\right\},
\end{align*}
and
\begin{align*}
J(\mathfrak{g})_{1/2}=\bigg\{   \prod\limits_{i=1}^m(x_i^{1/2}+x_i^{-1/2})\mathop{\prod}\limits_{i=1}^m\prod\limits_{j=1}^n(u_i-v_j)g \bigg|g\in  \mathbb{C}[u_1,\cdots,u_m,v_1,\cdots,v_n]^{\mathfrak{S}_m\times\mathfrak{S}_n}                         \bigg\}.
\end{align*}

{\bf$\boxed{B(0,n) \text{ case:}}$} In this case 
$\Lambda=\mathbb{Z}\Phi=\Big\{\mathop{\sum}\limits_{j=1}^{n} \mu_j\delta_j\Big|\mu_j\in\mathbb{Z},~\forall j   \Big\},$
so $2\Lambda\cap \mathbb{Z}\Phi=2\Lambda$ and this algebra $J_{\mathrm{ev}}(\mathfrak{g})=\mathbb{C}[v_1,v_2,\cdots,v_n]^{\mathfrak{S}_n}$, where the notation $v_i$ are the same as above.

{\bf$\boxed{C(n+1) \text{ case:}}$} In this case
\begin{align*}
&\Lambda=\bigg\{ \lambda\varepsilon+\mathop{\sum}\limits_{j=1}^{n} \mu_j\delta_j\bigg|\lambda\in\mathbb{C},\mu_j\in\mathbb{Z},~\forall j    \bigg\}
\end{align*}
and
\begin{align*}
&\mathbb{Z}\Phi=\bigg\{ \lambda\varepsilon+\mathop{\sum}\limits_{j=1}^{n} \mu_j\delta_j\bigg|\lambda,\mu_j\in\mathbb{Z},~\forall j   ~\text{and}~    \lambda+\mathop{\sum}\limits_{j=1}^{n} \mu_j          ~\text{is even} \bigg\}.
\end{align*}
So $2\Lambda\cap \mathbb{Z}\Phi=\Big\{ \lambda\varepsilon+\mathop{\sum}\limits_{j=1}^{n} \mu_j\delta_j\Big|\lambda,\mu_j\in2\mathbb{Z},~\forall j  \Big\}$ and the algebra 
\begin{align*}
J_{\mathrm{ev}}(\mathfrak{g})=\left\{ f\in \mathbb{C}[x^{\pm1},y_1^{\pm1},\cdots,y_{n+1}^{\pm1}]^W\left|      y_j\frac{\partial f}{\partial y_j}+x\frac{\partial f}{\partial x}\in(x-y_j)  \right.\right\}.
\end{align*}

{\bf$\boxed{D(m,n), m>1,n>0 \text{ case:}}$} Let $\lambda=\mathop{\sum}\limits_{i=1}^{m}\lambda_i\varepsilon_i+\mathop{\sum}\limits_{j=1}^{n} \mu_j\delta_j\in\mathfrak{h}^*$ and $u_i, v_j$ are as above, then 
\begin{align*}
&\Lambda=\left\{ \lambda\in\mathfrak{h}^*\left|\mu_j\in\mathbb{Z},~\forall j \text{ and all } \lambda_i\in\mathbb{Z} ~\text{or all}~ \lambda_i\in\mathbb{Z}+\frac{1}{2}  \right.\right\}
\end{align*}
and
\begin{align*}
&\mathbb{Z}\Phi=\bigg\{ \lambda\in\mathfrak{h}^*\bigg|\lambda_i,\mu_j\in\mathbb{Z},~\forall i,j  ~\text{and}~  \mathop{\sum}\limits_{i=1}^{m}\lambda_i+\mathop{\sum}\limits_{j=1}^{n} \mu_j ~\text{is even}                    \bigg\}.
\end{align*}
So 
$$
2\Lambda\cap \mathbb{Z}\Phi=\begin{cases}
2\mathbb{Z}\Phi+\mathbb{Z}\left(\mathop{\sum}\limits_{i=1}^n \varepsilon_i\right)+2\mathbb{Z}\varepsilon_n, &\text{ if }m=2k,\\
2\mathbb{Z}\Phi+2\mathbb{Z}\varepsilon_n, &\text{ if }m=2k+1,
\end{cases}
$$
for some positive integer $k$.  Thus the  algebra $J_{\mathrm{ev}}(\mathfrak{g})$ is, respectively, equal to $J(\mathfrak{g})_0\oplus J(\mathfrak{g})_{1/2}$ for $m=2k$ and $J(\mathfrak{g})_0$ for $m=2k+1$, where 
\begin{align*}
J(\mathfrak{g})_0=\left\{  f\in\mathbb{C}[x^{\pm1}_1,\cdots,x^{\pm1}_m,y^{\pm1}_1,\cdots,y^{\pm1}_n]^{W}\left|x_i\frac{\partial f}{\partial x_i}+y_j\frac{\partial f}{\partial y_j}\in(x_i-y_j)                  \right.\right\},
\end{align*}
and
\begin{align*}
J(\mathfrak{g})_{1/2}=\bigg\{  \mathop{\prod}\limits_{i,j}(u_i-v_j)\Big((x_1x_2\cdots x_m)^{1/2}\mathbb{C}[x^{\pm1}_1,\cdots,x^{\pm1}_m,y^{\pm1}_1,\cdots,y^{\pm1}_n]\Big)^W                     \bigg\}.
\end{align*}

{\bf$\boxed{D(2,1,\alpha) \text{ case:}}$} In this case,
$$
\Lambda=\bigg\{ \mathop{\sum}\limits_{i=1}^3\lambda_i\varepsilon_i\bigg| \lambda_i\in\mathbb{Z}, ~\forall i         \bigg\},\text{ and }
\mathbb{Z}\Phi=\bigg\{   \mathop{\sum}\limits_{i=1}^3\lambda_i\varepsilon_i\bigg| \lambda_i\in\mathbb{Z} ~\text{and}~\lambda_i-\lambda_j\in2\mathbb{Z},     ~\forall i,j   \bigg\}.
$$
So $2\Lambda\cap \mathbb{Z}\Phi=2\Lambda$. Thus the algebra 
$$J_{\mathrm{ev}}(\mathfrak{g})=\begin{cases}
\big\{c+\Delta h | c\in\mathbb{C},h\in\mathbb{C}[u_1,u_2,u_3] \big\},&\text{ if } \alpha \text{ is not rational}, \\
\big\{g(w_{\alpha})+\Delta h| g\in\mathbb{C}[\omega],h\in\mathbb{C}[u_1,u_2,u_3] \big\}, &\text{ if }\alpha=p/q \text{ with } p\in\mathbb{Z},q\in\mathbb{N},
\end{cases}
$$
where \begin{align*}
\Delta=u_1^2+u_2^2+u_3^2-u_1u_2u_3-4,~ u_i=x_i+x_i^{-1}, \text{ for }i=1,2,3,
\end{align*} 
and \begin{align*}
\omega_{\alpha}=(x_1+x_1^{-1}-x_2x_3-x_2^{-1}x_3^{-1})\frac{(x_2^p-x_2^{-p})(x_3^{q}-x_3^{-q})}{(x_2-x_2^{-1})(x_3-x_3^{-1})}+x_2^px_3^{-q}+x_2^{-p}x_3^q.
\end{align*}

{\bf$\boxed{F(4) \text{ case:}}$} In this case,
\begin{align*}
&\Lambda=\bigg\{ \mu\delta+\mathop{\sum}\limits_{i=1}^3\lambda_i\varepsilon_i\bigg|\text{all}~\lambda_i\in\mathbb{Z} ~\text{or}~\text{all}~\lambda_i\in\mathbb{Z}+\frac{1}{2} ,~  2\mu\in\mathbb{Z}              \bigg\},
\end{align*}
and 
\begin{align*}
&\mathbb{Z}\Phi=\bigg\{ \mu\delta+\mathop{\sum}\limits_{i=1}^3\lambda_i\varepsilon_i\bigg|\text{all}~\lambda_i,~\mu\in\mathbb{Z} ~\text{or}~\text{all}~\lambda_i,~\mu\in\mathbb{Z}+\frac{1}{2}  \bigg\}.
\end{align*}
So $2\Lambda\cap \mathbb{Z}\Phi=2\Lambda$, and the algebra
\begin{align*}
J_{\mathrm{ev}}(\mathfrak{g})=\Big\{  g(\omega_1,\omega_2)+\Delta h\Big| h\in\mathbb{C}[x_1^{\pm2},x_2^{\pm2},x_3^{\pm2},(x_1x_2x_3)^{\pm1},y^{\pm1}]^W, ~g\in\mathbb{C}[\omega_1,\omega_2]     \Big\},
\end{align*}
where
\begin{align*}
\Delta=\left(y+y^{-1}-x_1x_2x_3-x_1^{-1}x_2^{-1}x_3^{-1}\right)\mathop{\prod}\limits_{i=1}^3\left(y+y^{-1}-\frac{x_1x_2x_3}{x_i^2}-\frac{x_i^2}{x_1x_2x_3}\right),
\end{align*}
and
\begin{align*}
\omega_k=\mathop{\sum}\limits_{1\leqslant i<j\leqslant 3}\left(x^{2k}_i+x^{-2k}_i+\frac{1}{2}\right)\left(x^{2k}_j+x^{-2k}_j+\frac{1}{2}\right)-\frac{3}{4}+y^{2k}+y^{-2k}-(y^k+y^{-k})\mathop{\prod}\limits_{i=1}^3\left(x_i^k+x_i^{-k}\right)
\end{align*}
with $k=1,2$, and $W=\mathbb{Z}_2\times (\mathfrak{S}_3\ltimes \mathbb{Z}_2^3)$.

{\bf$\boxed{G(3) \text{ case:}}$} In this case,
$\Lambda=\mathbb{Z}\Phi=\big\{ \lambda_1\varepsilon_1+\lambda_2\varepsilon_2+\mu\delta|\lambda_1,\lambda_2,\mu\in\mathbb{Z}    \big\}$.
So $2\Lambda\cap \mathbb{Z}\Phi=2\Lambda$, and the algebra
\begin{align*}
J_{\mathrm{ev}}(\mathfrak{g})=\bigg\{g(\omega)+\mathop{\prod}\limits_{i=1}^3(v-u_i)h\bigg|h\in\mathbb{C}[v,u_1,u_2,u_3]^{\mathfrak{S}_3},~g\in\mathbb{C}[\omega]   \bigg\},
\end{align*}
where 
$$\omega=v^2-v(u_1+u_2+u_3+1)+u_1u_2+u_1u_3+u_2u_3.$$
and the notations $u_i, v$ are the same as above.
\section*{Acknowledgements}
We would like to express our debt to Shun-Jen Cheng,  Hiroyuki Yamane, Hechun Zhang, and  Ruibin Zhang for many insightful discussions.  This paper was partially written up during the second author visit to Institute of Geometry and Physics,  USTC, in Summer 2021,  from which we gratefully acknowledge the support and excellent working environment where most of this work was completed.  Y.  Wang is partially supported by the National Natural Science Foundation of China (Nos. 11901146 and 12071026),  and the Fundamental Research Funds for the Central Universities JZ2021HGTB0124. Y. Ye is partially supported by the National Natural Science Foundation of China (No. 11971449).

\bibliographystyle{alpha}

\end{document}